\documentclass[11pt,a4paper]{article}
\usepackage{amsmath}
\usepackage{amssymb}
\usepackage{amsthm}
\usepackage{amsfonts}
\usepackage{latexsym}
\usepackage{verbatim} 
\usepackage{xcolor}
\usepackage[a4paper,width=16cm,top=2cm,bottom=2cm,footskip=1cm
]{geometry}

\usepackage[flushmargin]{footmisc}
\usepackage{color}                    
\usepackage{esvect}
\usepackage{mathrsfs}

\theoremstyle{plain}
\newtheorem{thm}{Theorem}[section]
\newtheorem{cor}{Corollary}[section]
\newtheorem{lem}{Lemma}[section]
\newtheorem{prop}{Proposition}[section]

\newtheorem{ass}{Assumption}[section]
\theoremstyle{remark}

\theoremstyle{definition}
\newtheorem{defn}{Definition}[section]

\newtheorem{exmp}{Example}[section]

\newtheorem{rem}{Remark}[section]

\newcommand{\Complex}{\mathbb C}
\newcommand{\Real}{\mathbb R}
\newcommand{\N}{\mathbb N}
\newcommand{\ddbar}{\overline\partial}
\newcommand{\pr}{\partial}
\newcommand{\ol}{\overline}

\newcommand{\norm}[1]{\left\Vert#1\right\Vert}
\newcommand{\abs}[1]{\left\vert#1\right\vert}
\newcommand{\set}[1]{\left\{#1\right\}}
\newcommand{\To}{\rightarrow}
\DeclareMathOperator{\Ker}{Ker}

\hyphenation{ma-ni-fold ma-ni-folds re-pre-sen-ta-tion ope-ra-tor sa-ti-sfy-ing re-pre-sen-ta-tions mul-ti-pli-ci-ties va-lu-ed com-pa-ti-ble po-la-ri-za-tion par-ti-cu-lar sti-mu-la-ting tri-vial dif-fe-ren-tial va-ni-shing me-ta-li-near na-tu-ral-ly e-qui-va-len-tly ge-ne-ra-li-ty na-tu-ral fa-mi-ly geo-me-tric uni-ta-ri-ly e-qui-va-rian-tly li-nea-ri-za-tion dia-go-nal geo-me-try nor-ma-li-zed e-xi-sten-ce or-tho-go-nal}
\title{On the singularities of the Szeg\H{o} kernels on CR orbifolds}
\author{Andrea Galasso and Chin-Yu Hsiao\footnote{\noindent{\bf Address:} Room 407, Chee-Chun Leung Cosmology Hall, National Taiwan University; {\bf ORCID iD:} 0000-0002-5792-1674; {\bf e-mail}: andrea.galasso@ncts.ntu.edu.tw {\bf Address:} Institute of Mathematics, Academia Sinica, 6F, Astronomy-Mathematics Building, No.1, Sec.4, Roosevel: {\bf ORCHID iD:} 0000-0002-1781-0013; {\bf email}: chsiao@math.sinica.edu.tw; chinyu.hsiao@gmail.com} }

\date{}

\begin{document}
	\maketitle

	\begin{abstract} In this paper we study the microlocal properties of the Szeg\H{o} kernel of a given compact connected orientable CR orbifold whose Kohn Laplacian has closed range. This last assumption is satisfied if certain geometric conditions hold true, as in the smooth case. 
		As applications, we give a pure analytic proof of Kodaira-Bailey theorem and explain how to generalize a CR version of quantization commutes with reduction to orbifolds.
	\end{abstract}
	\tableofcontents
	
	\bigskip
	\textbf{Keywords:} CR orbifolds
	
	\textbf{Mathematics Subject Classification:} 32A2
	
	\section{Introduction}
	
	The study of Szeg\H{o} kernels on CR manifolds play an important role in CR/Complex analysis and geometry. Boutet de Monvel and Sj\"ostrand~\cite{bs} showed that the Szeg\H{o} kernel on a strongly pseudoconex CR manifold is a complex Fourier integral operator. Boutet de Monvel and Sj\"ostrand's result has a profound impact in CR/Complex analysis and geometry. 
	When $X$ is a CR orbifold, the study of the associated Szeg\H{o} kernel on $X$ is a very natural question and could have some application in CR/Complex geometry and geometric quantization theory. For example, in~\cite{HMM} and~\cite{hsiaohuang}, the authors studied quantization commutes with reduction for stronly speudococonex CR manifolds under the assumption that the group action $G$ is free near $\mu^{-1}(0)$, $\mu$ is the associated momentum map. When the group action $G$ is locally free near $\mu^{-1}(0)$, we need to understand the behaviour of the Szeg\H{o} kernels on CR orbifolds. The main goal of this paper is to study the associated Szeg\H{o} kernel on a CR orbifold $X$ with non-degenerate Levi form. As applications, we give a pure analytic proof of Kodaira-Bailey theorem and explain how to generalize a CR version of quantization commutes with reduction to orbifolds.

	We now formulate our main results. We refer the reader to Section~\ref{sec: orbifold} for some notations and terminology used here. 
	Let $(X,\,T^{1,0}X)$ be a compact orientable connected CR orbifold of dimension $2n+1,\, n\geq 1$. In Section \ref{sec: orbifold} we recall the basic material concerning orbifolds, we give definitions for the spaces of $(0,q)$-forms with $L^2(X)$ coefficients and for the Kohn Laplacian $\square_b^{(q)}$ on the orbifold $X$. We can then define abstractly the Szeg\H{o} projector for $X$ to be the projector
	\[ \Pi^{(q)}\,:\, L^2_{(0,q)}(X)\rightarrow \Ker \square_b^{(q)} \,. \]
	
	The main aim of this paper is to prove the following result, which is a generalization of Theorem $1.2$ in \cite{hsiao}. 
	
	\begin{thm} \label{thm:main}
		Assume that the Levi form is non-degenerate of constant signature $(n_-,n_+)$ on $X$. 
		Suppose $\square_b^{(q)}$ has closed range.  Then $$\Pi^{(q)}\equiv \Pi_-+\Pi_+$$ where $\Pi_{\pm} \equiv 0$ if $q\neq n_{\pm}$. Let $q=n_-$. We have $\mathrm{WF}'(\Pi_{-})=\mathrm{diag}(\Sigma^{-}\times \Sigma^{-})$, 
		$\mathrm{WF}'(\Pi_{+})=\mathrm{diag}(\Sigma^{+}\times \Sigma^{+})$ if $q=n_-=n_+$,
		\[\Sigma^{\pm}= \left\{(x,\,\lambda\,\omega_0(x))\in T^*X\,:\, \lambda\, _{<}^{>} \, 0 \right\}\,,  \] 
		$\mathrm{WF}'(\Pi_{\pm})=\set{(x,\xi,y,-\eta)\in T^*X\times T^*X;\, (x,\xi,y,\eta)\in\mathrm{WF}(\Pi_{\pm})}$, $\mathrm{WF}(\Pi_{\pm})$ denotes the wave front set of $\Pi_{\pm}$ in the sense of H\"ormander. Moreover, $\Pi_{\pm}$ has the following microlocal expression: Let $\Pi_{\pm}(x,y)\in\mathcal{D}'(X\times X,T^{*0,q}X\boxtimes(T^{*0,q}X)^*)$ be the distribution kernel of $\Pi_{\pm}$. Consider an open set $U\subset X$ and an orbifold chart $(\widetilde{U},G_U)\rightarrow U$.
		We have 
		\[\Pi_{\pm}(x,y)=\sum_{g\in G_U}\Bigr(\int _0^{+\infty}e^{it\, \varphi_{\pm}(\widetilde{x},g\cdot \widetilde{y})} s_{\pm}(\widetilde{x},g\cdot \widetilde{y},t)\,\mathrm{d}t+F_{\pm}(\widetilde x,g\cdot\widetilde y)\Bigr)\ \ \mbox{on $U\times U$} \,,\]
		where $\pi(\widetilde x)=x$, $\pi(\widetilde y)=y$, $\pi: \widetilde U\To U$ is the natural projection, $F_{\pm}$ are smoothing operators on $\widetilde U$ and 
		\[s_{\pm}( \widetilde{x}, \widetilde{y},t)\sim\sum^\infty_{j=0}s^j_{\pm}( \widetilde{x},  \widetilde{y})t^{n-j}\]
		in $S^{n}_{1, 0}( \widetilde{U}\times  \widetilde{U}\times\mathbb{R}_+\,,T^{*0,q}X\boxtimes(T^{*0,q}X)^*)$,
		\[\mbox{$s_{+}( \widetilde{x}, \widetilde{y},t)=0$ if $q\neq n_+$}\]
		and
		\[s^0_{-}(\widetilde{x}_0, \widetilde{x}_0)=\frac{1}{2}\pi^{-n-1}\lvert\det\mathcal{L}_{ \widetilde{x}_0}\rvert\tau_{ \widetilde{x}_0,n_{-}}\,, \widetilde{x}_0\in  \widetilde{U},\]
		if $q=n_-=n_+$, 
		\[s^0_{+}(\widetilde{x}_0, \widetilde{x}_0)=\frac{1}{2}\pi^{-n-1}\lvert\det\mathcal{L}_{ \widetilde{x}_0}\rvert\tau_{ \widetilde{x}_0,n_{+}}\,, \widetilde{x}_0\in  \widetilde{U},\]
		and the phase functions $\varphi_-$, $\varphi_+$ satisfy
		\[
		\renewcommand{\arraystretch}{1.2}
		\begin{array}{ll}
			&\varphi_+, \varphi_-\in\mathcal{C}^\infty( \widetilde{U}\times  \widetilde{U}),\ \ {\rm Im\,}\varphi_{\pm}( \widetilde{x},  \widetilde{y})\geq0,\\
			&\varphi_-( \widetilde{x},  \widetilde{x})=0,\ \ \varphi_-( \widetilde{x}, \widetilde{y})\neq0\ \ \mbox{if}\ \  \widetilde{x}\neq\widetilde{y},\\
			&\mathrm{d}_x\varphi_-( \widetilde{x},  \widetilde{y})\big|_{ \widetilde{x}= \widetilde{y}}=-\omega_0( \widetilde{x}), \ \ \mathrm{d}_{ \widetilde{y}}\varphi_-( \widetilde{x},  \widetilde{y})\big|_{ \widetilde{x}= \widetilde{y}}=\omega_0( \widetilde{x}), \\
			&-\ol\varphi_+( \widetilde{x},  \widetilde{y})=\varphi_-( \widetilde{x}, \widetilde{y}).
		\end{array}
		\] 
		
		Moreover, we have 
		\begin{equation}\label{e-gue220731yyd}
			\begin{split}
				&\sum_{g\in G_U}\Bigr(\int _0^{+\infty}e^{it\, \varphi_{\pm}(\widetilde{x},g\cdot \widetilde{y})} s_{\pm}(\widetilde{x},g\cdot \widetilde{y},t)\,\mathrm{d}t+F_{\pm}(\widetilde x,g\cdot\widetilde y)\Bigr)\\
				&=\sum_{h,g\in G_U}\frac{1}{\abs{G_U}}\Bigr(\int _0^{+\infty}e^{it\, \varphi_{\pm}(h\circ\widetilde{x},g\cdot \widetilde{y})} s_{\pm}(h\cdot\widetilde{x},g\cdot \widetilde{y},t)\,\mathrm{d}t+F_{\pm}(h\circ\widetilde x,g\cdot\widetilde y)\Bigr),
			\end{split}
		\end{equation}
		where $\mathcal{L}_{\widetilde x_0}$ denotes the Levi form of $X$ at $\widetilde x_0$, 
		${\rm det\,}\mathcal{L}_{\widetilde x_0}=\lambda_1(\widetilde x_0)\cdots\lambda_n(\widetilde x_0)$, $\lambda_j(\widetilde x_0)$, $j=1,\ldots,n$, are the eigenvalues of $\mathcal{L}_{\widetilde x_0}$ with respect to the given Hermitian metric on $\mathbb CTX$ and 
		$\tau_{ \widetilde{x}_0,n_{\pm}}$ are given by \eqref{tau140530}. 
	\end{thm}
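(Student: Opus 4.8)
The plan is to establish the theorem by lifting to orbifold charts, invoking the smooth result of \cite{hsiao} on the covers, and then descending by averaging over the local groups; the closed range hypothesis is precisely what lets the local pieces assemble into the globally defined $\Pi^{(q)}$ modulo smoothing.

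\textbf{Lifting to a chart.} First I would fix an orbifold chart $(\widetilde U, G_U)\to U$ and identify sections of $L^2_{(0,q)}(U)$ with the $G_U$-invariant sections of the corresponding bundle on the smooth manifold $\widetilde U$. Under this identification the orbifold Kohn Laplacian becomes the restriction to invariant forms of a genuine smooth Kohn Laplacian $\widetilde\square_b^{(q)}$ on $\widetilde U$. Since $G_U$ acts by CR isomorphisms preserving the chosen Hermitian metric on $\mathbb{C}TX$, the operator $\widetilde\square_b^{(q)}$ is $G_U$-invariant, so the averaging projector $A_{G_U}:=\abs{G_U}^{-1}\sum_{g\in G_U}g^*$ onto invariant forms commutes with $\widetilde\square_b^{(q)}$ and hence with its associated Szeg\H{o} projector.

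\textbf{Smooth parametrix and descent.} On $\widetilde U$ the Levi form is non-degenerate of constant signature $(n_-,n_+)$, so Theorem 1.2 of \cite{hsiao} furnishes the microlocal description of the smooth Szeg\H{o} projector $\widetilde\Pi^{(q)}=\widetilde\Pi_-+\widetilde\Pi_+$, with the phase functions $\varphi_\pm$, the symbols $s_\pm\in S^n_{1,0}$ and their expansions, the smoothing remainders $F_\pm$, the leading term
\[
s^0_{\pm}(\widetilde x_0,\widetilde x_0)=\tfrac12\pi^{-n-1}\abs{\det\mathcal{L}_{\widetilde x_0}}\tau_{\widetilde x_0,n_\pm},
\]
and all the listed properties of $\varphi_\pm$ (nonnegative imaginary part, vanishing on the diagonal, the differential conditions, and $-\ol\varphi_+=\varphi_-$). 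Composing this smooth kernel with $A_{G_U}$ and using the $G_U$-invariance of $\widetilde\Pi^{(q)}(\widetilde x,\widetilde y)$ in each variable collapses the resulting double average to the single sum $\sum_{g\in G_U}\widetilde\Pi_\pm(\widetilde x,g\cdot\widetilde y)$; this yields the stated oscillatory-integral expression on $U\times U$ and, by retaining the average in the left variable, the symmetrized identity \eqref{e-gue220731yyd}. The leading diagonal symbol is unchanged because only the identity element $g=e$ contributes to the singularity along $\widetilde x=\widetilde y$.

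\textbf{Globalization and wavefront set.} The main obstacle is the globalization step: showing that these chart-by-chart kernels are the restrictions of the single globally defined projector $\Pi^{(q)}$ up to smoothing operators. Here the closed range hypothesis is essential, exactly as in the smooth case — it provides the spectral gap guaranteeing that $\Pi^{(q)}$ differs from any local parametrix built from $\widetilde\Pi^{(q)}$ only by a smoothing error, so the local expressions patch consistently and the vanishing $\Pi_\pm\equiv 0$ for $q\neq n_\pm$ follows from the corresponding vanishing of $s_\pm$ upstairs. Finally, for the wavefront set I would note that each $g\in G_U$ acts on $T^*\widetilde U$ permuting the characteristic sets $\Sigma^\pm$, and that the summand $e^{it\varphi_\pm(\widetilde x,g\cdot\widetilde y)}s_\pm$ is singular only where $\widetilde x=g\cdot\widetilde y$, at which points $\mathrm{d}_x\varphi_-=-\omega_0$ and $\mathrm{d}_y\varphi_-=\omega_0$. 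Pushing these conormal directions down to $X$ gives $\mathrm{WF}'(\Pi_\pm)=\mathrm{diag}(\Sigma^\pm\times\Sigma^\pm)$ as claimed.
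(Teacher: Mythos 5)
Your overall strategy (lift to a chart, invoke the local construction of \cite{hsiao}, average over $G_U$, and use closed range to identify the result with $\Pi^{(q)}$ modulo smoothing) is the same as the paper's, but two steps that you treat as immediate are in fact the substance of the proof, and as written they do not go through.

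First, you invoke ``the smooth Szeg\H{o} projector $\widetilde\Pi^{(q)}$'' on the chart $\widetilde U$ and assert that it is $G_U$-invariant in each variable, which is what lets you collapse the double average $\abs{G_U}^{-1}\sum_{h,g}$ to the single sum $\sum_{g}$. But $\widetilde U$ is a non-compact open piece of a CR manifold: there is no globally defined Szeg\H{o} projector there, and Theorem 1.2 of \cite{hsiao} (a statement about compact $X$ with closed range) does not apply. What the local theory provides are the \emph{approximate} Szeg\H{o} kernels $S_\pm$ of Theorem~\ref{t-gue140205I}, which satisfy $\Box^{(q)}_bA+S_-+S_+=I$ only modulo smoothing; they are constructed by a local microlocal procedure, are canonical only up to smoothing errors, and are \emph{not} a priori $G_U$-invariant. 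Hence the identity \eqref{e-gue220731yyd} equating the single sum with the double average is not a formal consequence of invariance. The paper proves it in Lemma~\ref{l-gue220730yyd} by replacing $S_-+S_+$ with the canonical spectral projector $\Pi^{(q)}_{\leq\lambda,\widetilde U}=1_{[0,\lambda]}(\Box^{(q)}_{b,\widetilde U})$, which does commute with the averaging projector $Q$ (via the Helffer--Sj\"ostrand formula), and by invoking \cite[Theorem 4.7]{HM} to identify $\Pi^{(q)}_{\leq\lambda,\widetilde U}\equiv S_-+S_+$ on $\widetilde U$. Without some substitute for this step your argument only yields the double-averaged kernel $\hat S$, not the single-sum formula claimed in the theorem.

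Second, your globalization step is an assertion, not an argument: ``closed range provides the spectral gap guaranteeing that $\Pi^{(q)}$ differs from any local parametrix by a smoothing error.'' The difficulty is that the partial inverse $N$ furnished by closed range (Theorem~\ref{thm:global}) is only $L^2$-bounded, so from $NF+\Pi^{(q)}=\hat S$ with $F=\Box^{(q)}_b\hat S$ smoothing one cannot directly conclude that $NF$ is smoothing. The paper's resolution is to form the product $(\hat S^*-\Pi^{(q)})(\hat S-\Pi^{(q)})=F^*N^2F$, which \emph{is} smoothing because $N^2$ is sandwiched between smoothing operators, and then to combine $\Pi^{(q)}(\hat S_-+\hat S_+)=\Pi^{(q)}$ with $\hat S^*\hat S\equiv\hat S$ to conclude $\Pi^{(q)}\equiv\hat S_-+\hat S_+$. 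Some device of this kind is needed; naming the spectral gap is not enough. The remaining items in your sketch (the properties of $\varphi_\pm$, $s_\pm$, the leading diagonal term, and the wave front set computation) do follow from the local results once these two gaps are filled.
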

	
	The main idea of the proof is the following: On $\widetilde U$, by using approximate Szeg\H{o} kernel $S$ constructed in~\cite{hsiao} to define a kernel $$S^{(q)}_U(x,y):=\sum_{g,h\in G_U}\frac{1}{\abs{G_U}}S(h\cdot\widetilde x,g\cdot\widetilde y)$$ which is micro-locally given by the formulas above. We then show that $S^{(q)}_U$ is microlocally self-adjoint and it behaves as the identity when restricted to $\Ker \square_b^{(q)}$, up to smoothing contributions. Moreover, by using the result in~\cite{HM} about Szeg\H{o} kernel asymptotiocs for lower energy forms, we can show that $S^{(q)}_U$ satisfies \eqref{e-gue220731yyd}. 
	Then using $S^{(q)}_U$ one can define globally an operator $S^{(q)}$ on $X$ which we prove to be the projector $\Pi^{(q)}$ up to some smoothing operator, if $\square_b^{(q)}$ has closed range. This is a consequence of the properties of $S^{(q)}_U$ just mentioned and a theorem of functional analysis. The property \eqref{e-gue220731yyd} is important for further study about the algebra of Toeplitz operators on CR orbifolds.

	Now, assume that $X$ admits a transversal and CR locally free $S^1$ action $e^{i\theta}$. We take the Reeb vector field $R$ to be the vector field on $X$ induced by the $S^1$ action. For every $\ell\in\mathbb N$, put 
	\[X_\ell:=\set{x\in X;\, e^{i\theta}x\neq x, \theta\in\left[0,\frac{2\pi}{\ell}\right[, e^{i\frac{2\pi}{\ell}}x=x}.\]
	For every $k\in\mathbb Z$, set 
	\[{\rm Ker\,}\Box^{(q)}_{b,k}:=\set{u\in{\rm Ker\,}\Box^{(q)}_b;\, (e^{i\theta})^*u=e^{ik\theta}u}.\]
	Let 
	\[\Pi^{(q)}_k: L^2_{(0,q)}(X)\To{\rm Ker\,}\Box^{(q)}_{b,k}\]
	be the orthogonal projection and let $\Pi^{(q)}_k(x,y)\in\mathcal{C}^\infty(X\times X,T^{*0,q}X\boxtimes(T^{*0,q}X)^*)$ be the distribution kernel of $\Pi^{(q)}_k$.
	From Theorem~\ref{thm:main}, we can repeat the procedure in \cite{gh} and get 
	
	\begin{thm}\label{t-gue220731yyd}
		With the assumptions and notations used above, assume that the Levi form is non-degenerate of constant signature $(n_-,n_+)$ on $X$ and let $q=n_-$. Fix $p\in X$ and assume that $p\in X_\ell$, for some $\ell\in\mathbb N$. Consider an open set $U\subset X$, $p\in U$, and an orbifold chart $(\widetilde{U},G_U)\rightarrow U$. 
		We have as $k\To+\infty$,
		\begin{equation}\label{eq:szegos1free}
			\Pi^{(q)}_k(x,y)=\sum_{g\in G_U}\sum^{\ell-1}_{j=0}e^{\frac{2\pi kj}{\ell}} e^{ik\, \Psi(\widetilde{x},e^{i\frac{2\pi j}{\ell}}\cdot g\cdot\widetilde{y})}b(\widetilde{x},e^{i\frac{2\pi j}{\ell}}\cdot g\cdot \widetilde{y},k)+
			\sum_{g\in G_U}\sum^{\ell-1}_{j=0}F_k(\widetilde x,e^{i\frac{2\pi j}{\ell}}\cdot g\cdot \widetilde{y})
		\end{equation}
		where 
		\begin{equation}\label{e-gue220731ycd}
			\begin{split}
				&\Psi\in\mathcal{C}^\infty(\widetilde U\times\widetilde U),\\
				&\Psi(\widetilde x,\widetilde x)=0,\ \ \mbox{for all $\widetilde x\in\widetilde U$},\\
				&\frac{1}{C}\inf_{e^{i\theta}\in S^1}\set{d^2(\widetilde x,e^{i\theta}\widetilde y)}\leq {\rm Im\,}\Psi(\widetilde x,\widetilde y)\leq C\inf_{e^{i\theta}\in S^1}\set{d^2(\widetilde x,e^{i\theta}\widetilde y)},\\
				&\mbox{$\forall (\widetilde x,\widetilde y)\in\widetilde U\times\widetilde U$, $C>1$ is a constant},
			\end{split}
		\end{equation}
		\begin{equation}\label{e-gue220731ycdI}
			\begin{split}
				&\mbox{$b(\widetilde x,\widetilde y,k)\sim\sum^{+\infty}_{j=0}b_j(\widetilde x,\widetilde y)k^{n-j}$ in
					$S^n(1;\widetilde U\times\widetilde U,T^{*0,q}X\boxtimes(T^{*0,q}X)^*)$},\\
				&b_j(\widetilde x,\widetilde y)\in\mathcal{C}^\infty(\widetilde U\times\widetilde U,T^{*0,q}X\boxtimes(T^{*0,q}X)^*),\ \ j=0,1,\ldots,\\
				&b_0(\widetilde x,\widetilde x)=\frac{1}{2}\pi^{-n-1}\lvert\det\mathcal{L}_{ \widetilde{x}_0}\rvert\tau_{ \widetilde{x}_0,n_{-}}\,, \widetilde{x}_0\in  \widetilde{U},
			\end{split}
		\end{equation}
		$F_k\in\mathcal{C}^\infty(\widetilde U\times\widetilde U,T^{*0,q}X\boxtimes(T^{*0,q}X)^*)$ satisfies the following: for every $N_1, N\in\mathbb N$, there is a constant $C_{N_1,N}$ independent of $k$ such that  $\norm{F_k(\widetilde x,\widetilde y)}_{\mathcal{C}^{N_1}(\widetilde U\times\widetilde U}\leq C_{N_1,N}k^{-N}$, for every $k\gg1$. 
	\end{thm}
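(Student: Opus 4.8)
The plan is to realize $\Pi^{(q)}_k$ as the $k$-th Fourier component of the full Szeg\H{o} projector $\Pi^{(q)}$ for the $S^1$-action and then to insert the microlocal description of $\Pi^{(q)}$ provided by Theorem~\ref{thm:main}, following the scheme of~\cite{gh}. Since $\Box^{(q)}_b$ commutes with the transversal CR $S^1$-action, so does $\Pi^{(q)}$, and the orthogonal projection onto the $k$-th isotypic component is given on kernels by
\[
\Pi^{(q)}_k(x,y)=\frac{1}{2\pi}\int_0^{2\pi}e^{ik\theta}\,\Pi^{(q)}(x,e^{i\theta}\cdot y)\,\mathrm{d}\theta.
\]
On the chart $(\widetilde U,G_U)$ I would substitute the expansion of $\Pi^{(q)}=\Pi_-$ from Theorem~\ref{thm:main} (only $\Pi_-$ contributes, as $q=n_-$), using that the $G_U$- and $S^1$-actions commute so that $e^{i\theta}\cdot g\cdot\widetilde y$ is well defined. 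This rewrites $\Pi^{(q)}_k$ as a sum over $g\in G_U$ of oscillatory integrals
\[
\frac{1}{2\pi}\int_0^{2\pi}\!\!\int_0^{+\infty}e^{i(k\theta+t\,\varphi_-(\widetilde x,e^{i\theta}g\widetilde y))}\,s_-(\widetilde x,e^{i\theta}g\widetilde y,t)\,\mathrm{d}t\,\mathrm{d}\theta,
\]
plus a contribution from the smoothing term $F_-$; the latter, being a Fourier coefficient of a smooth kernel, is $O(k^{-\infty})$ and is absorbed into the remainder $F_k$.

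Next I would rescale $t=k\sigma$ and treat $k$ as the large parameter, so that the phase becomes $k\Phi(\theta,\sigma)$ with $\Phi=\theta+\sigma\,\varphi_-(\widetilde x,e^{i\theta}g\widetilde y)$, and apply complex stationary phase (legitimate since ${\rm Im\,}\varphi_-\ge0$). The stationary equations are $\partial_\sigma\Phi=\varphi_-(\widetilde x,e^{i\theta}g\widetilde y)=0$ and $\partial_\theta\Phi=1+\sigma\,\partial_\theta\varphi_-=0$. By the properties of $\varphi_-$ in Theorem~\ref{thm:main} the first equation forces $e^{i\theta}g\widetilde y$ onto the diagonal; there $\mathrm{d}_{\widetilde y}\varphi_-\big|_{\widetilde x=\widetilde y}=\omega_0(\widetilde x)$ and, since the $S^1$-flow is generated by the Reeb field $R$, $\partial_\theta\varphi_-=\omega_0(R)=-1$ (the present normalization), whence $\sigma=1$; thus $t\sim k$ lies in the integration range, which is exactly what singles out $k\to+\infty$. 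At this critical point the Hessian of $\Phi$ in $(\theta,\sigma)$ equals $\left(\begin{smallmatrix}\sigma\,\partial^2_\theta\varphi_-&-1\\-1&0\end{smallmatrix}\right)$, with determinant $-1$ and signature $0$, so the two-dimensional stationary-phase factor is exactly $2\pi/k$ with trivial phase; together with the prefactor $k/2\pi$ and the leading amplitude $s^0_-(k\sigma)^n$ at $\sigma=1$ it produces $k^n s^0_-$, which is precisely why $b_0(\widetilde x,\widetilde x)=s^0_-(\widetilde x,\widetilde x)=\tfrac12\pi^{-n-1}\lvert\det\mathcal L_{\widetilde x_0}\rvert\tau_{\widetilde x_0,n_-}$.

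The decisive new feature relative to~\cite{gh} is the isotropy. Since the action is only locally free and $p\in X_\ell$, the stabilizer of $p$ is $\mathbb Z_\ell$, so for each $g\in G_U$ the curve $\theta\mapsto e^{i\theta}g\widetilde y$ meets a neighbourhood of $\widetilde x$ at the $\ell$ distinct angles $\theta\equiv\theta_0+2\pi j/\ell$, $j=0,\dots,\ell-1$; local freeness guarantees that each critical point is nondegenerate, so stationary phase applies around each separately. Evaluating $e^{ik\theta}$ at $\theta=2\pi j/\ell$ produces the root-of-unity weights $e^{i2\pi kj/\ell}$, and the critical value of $t\,\varphi_-$ defines the phase $\Psi$, with $\Psi(\widetilde x,\widetilde x)=0$; the comparison ${\rm Im\,}\varphi_-(\widetilde x,\widetilde z)\asymp d^2(\widetilde x,\widetilde z)$ then yields the two-sided bound of ${\rm Im\,}\Psi$ by $\inf_{\theta}d^2(\widetilde x,e^{i\theta}\widetilde y)$, the nearest orbit point being dominant. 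Summing over $g\in G_U$ and $j=0,\dots,\ell-1$ reproduces the double sum of~\eqref{eq:szegos1free}, with $b\sim\sum_j b_j k^{n-j}$ in $S^n$ and $b_0$ as above.

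Finally, the $F_-$-term together with the full asymptotic tail of the stationary-phase expansions and the non-stationary contributions (where repeated integration by parts in $(\theta,\sigma)$ gains arbitrary powers of $k^{-1}$) are collected into $F_k$, giving $\norm{F_k}_{\mathcal C^{N_1}(\widetilde U\times\widetilde U)}\le C_{N_1,N}k^{-N}$. I expect the principal difficulty to be the \emph{uniform} stationary-phase analysis in the presence of isotropy: one must show that, uniformly in $(\widetilde x,\widetilde y)$, the $\theta$-integral localizes to exactly the $\ell$ nondegenerate critical points $2\pi j/\ell$ with the claimed weights and no others, carefully separating the diagonal regime (where several $g\in G_U$ may contribute simultaneously) from off-diagonal decay, and to check that $\Psi$ and $b$ are globally well defined on $\widetilde U\times\widetilde U$ and independent of the auxiliary choices. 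Controlling the error uniformly up to the diagonal, where the critical manifold and the phase degenerate, is the technically delicate point.
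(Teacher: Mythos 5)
Your proposal is correct and follows essentially the route the paper itself takes: the paper gives no independent argument for this theorem but simply invokes Theorem~\ref{thm:main} and says one "repeats the procedure in \cite{gh}", and that procedure is exactly what you reconstruct --- extract the $k$-th Fourier coefficient $\frac{1}{2\pi}\int_0^{2\pi}e^{ik\theta}\Pi^{(q)}(x,e^{i\theta}y)\,\mathrm{d}\theta$, insert the Fourier integral representation of $\Pi_-$, rescale $t=k\sigma$, and apply complex stationary phase at the critical points $\sigma=1$, $e^{i\theta}g\widetilde y=\widetilde x$, with the $\mathbb Z_\ell$ isotropy producing the $\ell$ critical angles and the weights $e^{2\pi ikj/\ell}$ (your inclusion of the factor $i$ in the exponent corrects what is evidently a typo in \eqref{eq:szegos1free}). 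Your identification of the delicate point --- uniform localization of the $\theta$-integral near the diagonal where several critical points coalesce --- is also the genuine technical content carried by the reference.
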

	
	We refer the reader to~\cite[Definition 8.1]{HM} for the meaning of semi-classical symbol spaces 
	$S^n(1;\widetilde U\times\widetilde U,T^{*0,q}X\boxtimes(T^{*0,q}X)^*)$.
	
	
	
	As an easy consequence of the asymptotic expansion of the Szeg\H{o} kernel along the diagonal we also have the following theorem, see also Corollary $1.1$ in \cite{p}. 
	
	
	\begin{thm} \label{thm:dim}
		Under the same assumptions and notations as in Theorem~\ref{t-gue220731yyd} above, 
		assume that $X=X_{\ell_0}\cup X_{\ell_1}\cup\cdots\cup X_{\ell_u}$, $1\leq\ell_0<\ell_1<\cdots<\ell_u$, $\ell_j\in\mathbb N$, $j=0,1,\ldots,u$, $u\in\mathbb N$.  Denote by $p$ the least common multiple of $l_0,l_1,\dots, l_n$. 
		We have
		\[\lim_{k\rightarrow +\infty}\frac{1}{(kp)^n}{\rm dim\,}{\rm Ker\,}\Box^{(q)}_{b,kp}=\frac{\ell_0}{2}\pi^{-n-1}\int_X\abs{{\rm det\,}\mathcal{L}_x}\mathrm{dV}_X.\]
	\end{thm}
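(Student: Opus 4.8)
The plan is to compute the dimension as a trace and to extract its leading term from the microlocal expansion of Theorem~\ref{t-gue220731yyd}. Since $\mathrm{Ker}\,\Box^{(q)}_{b,kp}$ is finite dimensional and $\Pi^{(q)}_{kp}$ is the orthogonal projection onto it with smooth kernel $\Pi^{(q)}_{kp}(x,y)$, it is trace class and
\[
\dim\mathrm{Ker}\,\Box^{(q)}_{b,kp}=\mathrm{Tr}\,\Pi^{(q)}_{kp}=\int_X\mathrm{tr}\,\Pi^{(q)}_{kp}(x,x)\,\mathrm{d}V_X(x),
\]
where $\mathrm{tr}$ is the pointwise trace on $T^{*0,q}X$. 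First I would cover $X$ by finitely many orbifold charts $(\widetilde U,G_U)\to U$ subordinate to a partition of unity and insert into each one the expansion \eqref{eq:szegos1free}, evaluated on the diagonal $\widetilde x=\widetilde y$ with $k$ replaced by $kp$. The smoothing term $F_{kp}$ contributes $O((kp)^{-\infty})$ by its defining estimate, so only the oscillatory sum matters.

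Next I would isolate the contribution of the generic stratum $X_{\ell_0}$, which is open and dense because $\ell_0$ is the smallest (principal) isotropy order, so that its complement $X_{\ell_1}\cup\cdots\cup X_{\ell_u}$ has measure zero. For $x\in X_{\ell_0}$ the $S^1$-isotropy is $\mathbb Z/\ell_0$, hence the $\ell_0$ rotations $e^{i2\pi j/\ell_0}$, $j=0,\ldots,\ell_0-1$, fix $x$; on these summands $\Psi(\widetilde x,e^{i2\pi j/\ell_0}\widetilde x)=\Psi(\widetilde x,\widetilde x)=0$ by \eqref{e-gue220731ycd}, while the resonance factors $e^{i2\pi(kp)j/\ell_0}$ all equal $1$ since $\ell_0\mid p$. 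Each such term therefore contributes $b(\widetilde x,\widetilde x,kp)\sim(kp)^n b_0(\widetilde x,\widetilde x)$, and using \eqref{e-gue220731ycdI} together with $\mathrm{tr}\,\tau_{\widetilde x,n_-}=1$ gives the pointwise diagonal density
\[
\mathrm{tr}\,b_0(\widetilde x,\widetilde x)=\tfrac12\pi^{-n-1}\abs{\det\mathcal L_{\widetilde x}}.
\]
Summing the $\ell_0$ identical resonant contributions and integrating over $X_{\ell_0}$ produces the leading term $\ell_0(kp)^n\cdot\tfrac12\pi^{-n-1}\int_X\abs{\det\mathcal L_x}\,\mathrm{d}V_X$, which after dividing by $(kp)^n$ is exactly the claimed limit.

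It then remains to show that everything else is $o((kp)^n)$, and this is the technical heart of the argument. The summands with $e^{i2\pi j/\ell}g\widetilde x\neq\widetilde x$ split into two types: if this point lies off the $S^1$-orbit of $\widetilde x$ then $\mathrm{Im}\,\Psi>0$ by \eqref{e-gue220731ycd} and the amplitude $e^{ikp\Psi}$ is exponentially small, whereas if it lies on the orbit the phase is purely oscillatory and the $\mathrm{d}V_X$-integral is suppressed by non-stationary phase; in both cases the asymptotics localize to the fixed-point set of the transformation, which is lower dimensional. The delicate point is the contribution of a shrinking tubular neighbourhood of $X_{\ell_1}\cup\cdots\cup X_{\ell_u}$, where the larger isotropy groups switch on extra resonant terms carrying factors $\ell_j>\ell_0$: although the pointwise density there is larger, the volume of the neighbourhood tends to $0$, so the total is $o((kp)^n)$ once the radius is taken to decay suitably with $k$. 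Making the oscillatory and exponential bounds uniform up to the strata boundaries is the main obstacle; I would carry this out by the stratified complex stationary-phase analysis of \cite{p} and \cite{gh}, which is precisely the tool that Theorem~\ref{t-gue220731yyd} was designed to feed. Combining the generic-stratum leading term with these negligibility estimates yields the stated formula.
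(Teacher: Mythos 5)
Your computation of the leading term is exactly the paper's: the trace formula $\dim\mathrm{Ker}\,\Box^{(q)}_{b,kp}=\int_X\mathrm{Tr}\,\Pi^{(q)}_{kp}(x,x)\,\mathrm{dV}_X$, restriction to the open dense stratum $X_{\ell_0}$, and the $\ell_0$ resonant summands each contributing $(kp)^n\,\mathrm{tr}\,b_0(\widetilde x,\widetilde x)=\tfrac12\pi^{-n-1}(kp)^n\abs{\det\mathcal L_{\widetilde x}}$. Where you diverge is in handling the remainder. You propose a stratified analysis with shrinking tubular neighbourhoods of $X_{\ell_1}\cup\cdots\cup X_{\ell_u}$ and uniform oscillatory/exponential bounds up to the strata boundaries, and you correctly identify that uniformity as the hard point --- but it is also an unnecessary one. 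The paper observes that \eqref{eq:szegos1free} already gives a \emph{global} pointwise bound $\mathrm{Tr}\,\Pi^{(q)}_{kp}(x,x)\leq Ck^n$ valid for every $x\in X$ (since $\mathrm{Im}\,\Psi\geq0$ forces $\abs{e^{ik\Psi}}\leq1$, the symbol is $O(k^n)$, and the number of summands is uniformly bounded), and then applies Lebesgue dominated convergence to $k^{-n}\mathrm{Tr}\,\Pi^{(q)}_{kp}(x,x)$. Because the higher strata have measure zero, only the pointwise limit at each fixed $x\in X_{\ell_0}$ is needed, and no uniform stationary-phase estimate near the strata boundaries ever enters. Both routes reach the same formula, but the dominated-convergence argument dissolves what you call the ``technical heart'' of your plan; as written, your proposal leaves precisely that step unproved (deferred to \cite{p} and \cite{gh}), so you should either carry out those uniform estimates or, better, replace them with the domination-plus-DCT argument.
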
 
	
	We work with the same assumptions and notations as in Theorem~\ref{t-gue220731yyd} above and assume that $X$ is strongly pseudoconvex.
	Let $M$ be the Hodge orbifold obtained by $M=X/S^1$. There exists a complex proper line orbibundle $(L,h^L)$ such that $X\rightarrow M$ is the circle orbibundle $X\subset L^*$.
	For $k\in\mathbb N$, let $H^0(M,L^k)$ be the space of global holomorphic sections of $M$ with values in $L^k$. The space ${\rm Ker\,}\Box^{(0)}_{b,k}$ is isomorphic to $H^0(M,L^k)$. 
	Let $\set{f_j}^{d_k}_{j=1}\subset H^0(M,L^k)$ be an orthonormal basis. 
	The Kodaira-Bailey map is given by 
	\begin{equation}\label{e-gue220805yyd}
		\begin{split}
			\Phi_{k}:M&\To \mathbb{C}\mathbb P^{d_k-1},\\
			x&\To[f_1(x),\ldots,f_{d_k}(x)].
		\end{split}
	\end{equation}
	As an application of Theorem~\ref{t-gue220731yyd}, we will give an analytic proof of the following Kodaira-Bailey embedding theorem
	
	\begin{thm}\label{thm:genkodaira} 
		With the same assumptions and notations as in Theorem~\ref{t-gue220731yyd} and suppose that $X$ is strongly pseudoconvex. Then, the Kodaira-Bailey map
		$\Phi_{kp}:M \hookrightarrow \mathbb{C}\mathbb P^{d_{kp}-1}$ is an embedding for $k$ sufficiently large, where $p$ is the least common multiple of $l_0,l_1,\dots, l_n$. 
	\end{thm}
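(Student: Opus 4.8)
The plan is to prove that the Kodaira-Bailey map $\Phi_{kp}$ is an embedding for $k$ large by establishing the two standard requirements separately, namely that $\Phi_{kp}$ separates points and separates tangent vectors (is an immersion), and then invoking compactness of $M$ together with properness of the line orbibundle. The entire argument rests on the off-diagonal and on-diagonal asymptotics of $\Pi^{(0)}_{kp}(x,y)$ furnished by Theorem~\ref{t-gue220731yyd} with $q=n_-=0$ (strong pseudoconvexity forces $n_-=0$, $n_+=n$, so only the $\Pi_-$ piece survives and $b_0(\widetilde x,\widetilde x)$ is strictly positive). First I would recall the standard reformulation: writing the reproducing kernel $\Pi^{(0)}_{kp}(x,y)=\sum_j f_j(x)\otimes \overline{f_j(y)}$, the map $\Phi_{kp}$ separates $x_1\neq x_2$ precisely when the $2\times 2$ Gram matrix built from $\Pi^{(0)}_{kp}(x_i,x_j)$ is nondegenerate, and it is an immersion at $x$ precisely when a similar matrix involving first derivatives of the kernel is nondegenerate. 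Thus everything reduces to estimating $\Pi^{(0)}_{kp}(x,y)$ and its derivatives.

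Next I would carry out the on-diagonal estimate: by \eqref{e-gue220731ycdI}, at a point $\widetilde x_0$ lying over $p\in X_{\ell_0}$ (so $\ell=\ell_0$ in the local expansion), the leading term gives $\Pi^{(0)}_{kp}(x,x)\sim \ell_0\, b_0(\widetilde x_0,\widetilde x_0)(kp)^n$ with $b_0(\widetilde x_0,\widetilde x_0)=\tfrac12\pi^{-n-1}\lvert\det\mathcal{L}_{\widetilde x_0}\rvert>0$; the $j$-sum $\sum_{j=0}^{\ell-1}e^{2\pi kj/\ell}$ collapses correctly when the exponent is an integer multiple of $2\pi i$, which is exactly why one restricts to the subsequence $kp$ with $p$ the least common multiple. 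This positivity shows the kernel is bounded below on the diagonal, so the sections $\set{f_j}$ have no common zero and $\Phi_{kp}$ is well-defined. For the off-diagonal decay, I would use the lower bound on ${\rm Im\,}\Psi$ in \eqref{e-gue220731ycd}: for $x\neq y$ the factor $e^{ik\Psi}$ decays like $e^{-ck\, d^2(x,y)}$, so after a rescaling $d(x,y)\sim k^{-1/2}$ the kernel exhibits the usual Gaussian peak concentrated on the diagonal. Separation of points for two points at fixed positive distance then follows from a Schur-type off-diagonal smallness estimate comparing $\lvert\Pi^{(0)}_{kp}(x_1,x_2)\rvert^2$ against $\Pi^{(0)}_{kp}(x_1,x_1)\,\Pi^{(0)}_{kp}(x_2,x_2)$; the immersion property follows from the analogous comparison after one differentiation, using that the phase $\Psi$ has the right Hessian along the diagonal (encoded in $\mathrm{d}_x\varphi_-$, $\mathrm{d}_y\varphi_-$ from Theorem~\ref{thm:main}) so that the rescaled kernel is a genuine nondegenerate Gaussian.

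The genuinely delicate point, and the one I would treat most carefully, is the interaction between the orbifold structure and the separation of nearby points. In the local orbifold chart $(\widetilde U,G_U)$, two distinct orbifold points $x,y\in U$ correspond to full $G_U$-orbits $G_U\cdot\widetilde x$ and $G_U\cdot\widetilde y$ in $\widetilde U$, and the kernel \eqref{eq:szegos1free} sums over $g\in G_U$ and over the $S^1$-isotropy roots $e^{2\pi ij/\ell}$. To separate $x$ from $y$ one must ensure the Gaussian peaks sitting at the lifted points do not conspire to coincide; near a singular point $p\in X_{\ell_0}$ with nontrivial isotropy this requires checking that the distinct lifts $e^{2\pi ij/\ell}\cdot g\cdot\widetilde y$ stay separated from $\widetilde x$ at the scale $k^{-1/2}$, which is where the constant $\ell_0$ (smallest in the decomposition $X=X_{\ell_0}\cup\cdots\cup X_{\ell_u}$) and the transversality of the $S^1$-action enter. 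I expect this orbifold bookkeeping — confirming that the leading Gaussians localize at single orbit representatives and that the immersion computation is unaffected by passing to the quotient $M=X/S^1$, so that $\Phi_{kp}$ descends and remains injective on $M$ rather than merely on $X$ — to be the main obstacle. Once the local separation and immersion estimates are established uniformly, a standard compactness argument on $M\times M$ (splitting into the near-diagonal region handled by the immersion estimate and the far region handled by off-diagonal decay) upgrades the pointwise statements to the global embedding for all $k\gg1$, completing the proof.
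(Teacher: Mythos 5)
Your overall skeleton (peak sections for the immersion step, on-diagonal positivity of the kernel, Gaussian off-diagonal decay for well-separated points, a near/far dichotomy on $M\times M$) matches the paper's strategy, but there is a genuine gap at exactly the point you flag as delicate: separating pairs of points that collide as $k\to\infty$. If $x_k\neq y_k$, $\Phi_{kp}(x_k)=\Phi_{kp}(y_k)$ and $d(x_k,y_k)=o(k^{-1/2})$, then your Gram-matrix/Cauchy--Schwarz criterion degenerates to leading order, since $\lvert\Pi^{(0)}_{kp}(x_k,y_k)\rvert^2\bigl(\Pi^{(0)}_{kp}(x_k,x_k)\,\Pi^{(0)}_{kp}(y_k,y_k)\bigr)^{-1}\to1$, and your proposed remedy --- that ``the near-diagonal region is handled by the immersion estimate'' --- does not work as stated: injectivity of $\mathrm{d}\Phi_{kp}$ at each point only yields injectivity on neighborhoods whose size may shrink with $k$, so it cannot exclude such colliding pairs unless you prove a quantitative, $k$-uniform local injectivity at scale $k^{-1/2}$ (for instance via $C^1$-convergence of the rescaled maps to a fixed injective Bargmann--Fock model). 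You allude to the rescaled Gaussian picture but never supply this uniform statement, and without it the compactness argument does not close.

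The paper closes precisely this gap with two ingredients absent from your proposal. First, Lemma~\ref{l-gue220806yyd} ($A_{j+k}\subset A_j$ for $k\ge k_0$, proved by multiplying sections) allows one to replace the offending sequence by one with $\lvert k^2x_k\rvert,\lvert k^2y_k\rvert\to0$. Second, and decisively, one studies the normalized two-point function $f_k(t)=\lvert\Pi^{(0)}_{kp}(tx_k+(1-t)y_k,y_k)\rvert^2\bigl(\Pi^{(0)}_{kp}(tx_k+(1-t)y_k)\,\Pi^{(0)}_{kp}(y_k)\bigr)^{-1}$: Cauchy--Schwarz gives $0\le f_k\le1$, the hypothesis $\Phi_{kp}(x_k)=\Phi_{kp}(y_k)$ gives $f_k(0)=f_k(1)=1$, hence $f_k''\ge0$ somewhere in $(0,1)$; but the expansion \eqref{eq:aa} together with the Hessian lower bound on ${\rm Im\,}\Psi$ forces $f_k''(t)<-c_0k\lvert x_k-y_k\rvert^2(1+o(1))<0$, a contradiction. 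This second-derivative (log-concavity) argument is the actual mechanism for near-diagonal separation, and nothing in your proposal substitutes for it. Two smaller discrepancies: for the immersion step the paper works with explicit peak sections $\Pi^{(0)}_{kp}(\hat\chi_k)$ and $\Pi^{(0)}_{kp}(f_j\hat\chi_k)$ built from the $G_U$-invariant coordinate functions $f_j=\sum_{g\in G_U}g\cdot z_j$, rather than a derivative Gram matrix; and the orbifold difficulty you emphasize (Gaussian peaks at different lifts $g\cdot\widetilde y$ conspiring) is comparatively harmless, since the finite sums over $G_U$ and over the $\ell$-th roots preserve the off-diagonal smallness --- the real orbifold-specific input is the multiplicativity Lemma~\ref{l-gue220806yyd}.
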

	
	It should be mention that Ma and Marinescu~\cite{mm} also gave a proof of Theorem~\ref{thm:genkodaira}. 
	
	We assume that $X$ admits a CR compact Lie group action $G$, ${\rm dim\,}G=d$. We do not assume that $X$ admits a transversal and $S^1$ action. 
	We will work with Assumption~\ref{a-gue170123I} below and suppose that $X$ is strongly pseudoconvex. We refer the reader to Section~\ref{s-gue220801yyd} and Section~\ref{sec:app} for the notations and terminology used here.  We will assume that $0$ is a regular value of the moment map $\mu$ and thus $\mu^{-1}(0)$ is a suborbifold of $X$. Put $X_G:=\mu^{-1}(0)/G$. Note that both $X$ and $X_G$ are orbifolds. 
	Let $\Box^{(0)}_{b,X_G}$ be the Kohn Laplacian on the orbifold  $X_G$. Let $$({\rm Ker\,}\Box^{(0)}_b)^G:=\set{u\in{\rm Ker\,}\Box^{(0)}_b;\, g^*u=u, \forall g\in G}\,.$$ As an application of Theorem~\ref{thm:main}, we generalize a CR version of quantization commutes with reduction to orbifolds.
	
	\begin{thm}\label{t-gue22082yyd}
		There is a map $\sigma: ({\rm Ker\,}\Box^{(0)}_b)^G\subset L^2(X)\To{\rm Ker\,}\Box^{(0)}_{b,X_G}\subset L^2(X_G)$ such that $\sigma$ is Fredholm, that is, 
		\begin{itemize}
			\item[(i)] ${\rm Ker\,}\sigma\cap({\rm Ker\,}\Box^{(0)}_{b})^G$ is a finite dimensional subspace of $\mathcal{C}^{\infty}(X)$,
			\item[(ii)] ${\rm Coker\,}\sigma\cap {\rm Ker\,}\Box^{(0)}_{b,X_G}$ is a finite dimensional subspace of $\mathcal{C}^{\infty}(X_G)$.
		\end{itemize}
	\end{thm}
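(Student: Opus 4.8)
The plan is to define $\sigma$ explicitly as a composition of a restriction-and-descent with the Szeg\H{o} projector on $X_G$, and to prove Fredholmness by producing a parametrix modulo smoothing with the help of the microlocal description of the projectors from Theorem~\ref{thm:main}. Concretely, let $\iota:\mu^{-1}(0)\hookrightarrow X$ be the inclusion and $p:\mu^{-1}(0)\To X_G$ the quotient map. For $u\in(\mathrm{Ker}\,\Box^{(0)}_b)^G$, the $G$-invariance lets $\iota^*u$ descend to a function $Ru$ on $X_G$, and I set $\sigma:=\Pi^{(0)}_{X_G}\circ R$, where $\Pi^{(0)}_{X_G}$ is the Szeg\H{o} projector on $X_G$ (a smooth fibre density along $p$ may be inserted to normalise $R$, as in the smooth free case of~\cite{HMM,hsiaohuang}). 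Since $X$ is strongly pseudoconvex we have $n_-=0$, so $q=0$ and $\Pi^{(0)}=\Pi_-$ is the only surviving term in Theorem~\ref{thm:main}; the same applies on $X_G$.

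Next I would determine the microlocal structure of $\sigma$. In an orbifold chart $(\widetilde U,G_U)\To U$, Theorem~\ref{thm:main} expresses $\Pi^{(0)}$ and $\Pi^{(0)}_{X_G}$ as finite sums over the local groups of complex Fourier integral operators with phase $\varphi_-$ and symbol $s_-$. Writing $\sigma$ as the composition of $\Pi^{(0)}_{X_G}$, the descent $R$, and the $G$-invariant Szeg\H{o} kernel on $X$, and applying the method of complex stationary phase in the directions transverse to the $G$-orbits inside $\mu^{-1}(0)$, I expect to show that $\sigma$ is, modulo a smoothing operator, a complex Fourier integral operator in the same H\"ormander class. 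The critical manifold of the composed phase is the set of $G$-orbits in $\mu^{-1}(0)$, and the transverse Hessian is non-degenerate precisely because the action is locally free and $0$ is a regular value of $\mu$.

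The core of the argument is to verify that the principal symbol of $\sigma$ is elliptic. Strong pseudoconvexity is what makes this work: the leading symbol $s^0_-=\tfrac12\pi^{-n-1}\lvert\det\mathcal L_{\widetilde x_0}\rvert\tau_{\widetilde x_0,0}$ is strictly positive, and the Gaussian integral produced by stationary phase over the orbit directions contributes a non-vanishing factor (an orbit-volume factor times a non-degenerate Hessian determinant). Granting ellipticity, I would construct an approximate inverse $\beta$ with $\beta\sigma\equiv\mathrm{Id}$ on $(\mathrm{Ker}\,\Box^{(0)}_b)^G$ and $\sigma\beta\equiv\mathrm{Id}$ on $\mathrm{Ker}\,\Box^{(0)}_{b,X_G}$ modulo smoothing operators $F_1,F_2$. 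Then $\mathrm{Ker}\,\sigma\subset\mathrm{Ker}(\mathrm{Id}+F_1)$ and $\mathrm{Coker}\,\sigma\subset\mathrm{Ker}(\mathrm{Id}+F_2^*)$; since $F_1,F_2$ are compact these spaces are finite dimensional, and since $F_1,F_2$ are smoothing their elements are smooth, which is exactly~(i) and~(ii).

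The step I expect to be the main obstacle is the stationary-phase computation underlying the ellipticity, carried out in the orbifold category. One must simultaneously track the finite isotropy groups $G_U$ entering the local form of both Szeg\H{o} kernels and the compact group $G$ along which the reduction is performed, and confirm that their interaction produces no spurious cancellation in the principal symbol. Equally delicate is showing that the non-stationary contributions are genuinely smoothing, uniformly over charts and compatibly with the orbifold gluing, so that the remainders $F_1,F_2$ have the required smoothing property rather than merely being bounded.
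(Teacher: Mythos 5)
Your strategy is essentially the one the paper follows: define $\sigma$ explicitly as (Szeg\H{o} projector on $X_G$) $\circ$ (restriction to $\mu^{-1}(0)$ and descent), use the microlocal description of both projectors from Theorem~\ref{thm:main} together with complex stationary phase transverse to the $G$-orbits, and conclude Fredholmness from an approximate inverse. The differences are in the bookkeeping, and one of them is substantive. The paper does not construct a separate parametrix $\beta$; it arranges for $\sigma^*$ itself to be the parametrix, proving $\sigma^*\sigma=\Pi_G+\Gamma$ and $\sigma\sigma^*=\Pi_{X_G}+\hat\Gamma$ (Theorem~\ref{t-gue211213yyd}). For this to work the definition \eqref{e-gue180308II} must include, besides the density factor $f=\sqrt{V_{\rm eff}(x)\abs{G_x}}$ that you gesture at, an elliptic pseudodifferential operator $E\in L^{-d/4}_{\rm cl}(X_G)$: the $G$-invariant Szeg\H{o} kernel $\Pi_G$ has symbol of order $n-d/2$ while $\Pi_{X_G}$ lives on the $(2(n-d)+1)$-dimensional reduction, and the two factors of $E$ in $\sigma^*\sigma$ supply the missing order $-d/2$. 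In your version this mismatch does not disappear; it would have to be absorbed into $\beta$, which you would then need to construct as a Fourier integral operator of nonzero order rather than obtaining it for free as an adjoint. Second, the paper's remainders $\Gamma,\hat\Gamma$ are not smoothing, only gaining a fixed $\varepsilon>0$ of Sobolev regularity; your claim that the remainders are smoothing is stronger than what the stationary-phase computation delivers at leading order, but this is harmless: the $\varepsilon$-gain plus the bootstrap $u=-\Gamma u\in H^{\varepsilon}\Rightarrow u\in H^{2\varepsilon}\Rightarrow\cdots\Rightarrow u\in\mathcal{C}^\infty$, combined with Rellich applied to an orthonormal sequence, gives exactly (i) and (ii), which is how Corollary~\ref{c-gue220802yyd} concludes.
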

	
	We refer the reader to \eqref{e-gue180308II} for the explicit formula for $\sigma$. 
	
	Let us recall prior literature. First there is the paper \cite{song}, which deals with the Szeg\H{o} kernel on an orbifold circle bundle. The semi-classical behavior of Bergman kernels on complex orbifolds was studied by Dai, Liu and Ma~\cite{dlm}. Our approach is closer in spirit to \cite{dlm} where an explicit expression for an operator on a local chart $(\tilde{U},G_U)\rightarrow U$ is given. This is also motivated by previous results concerning $G$-invariant Szeg\H{o} kernel for locally free $G$-action on CR manifolds we obtained in \cite{gh}, in this case the CR reduction is an orbifold. 
	
	It is known that a compact connected strongly pseudoconvex CR orbifold $X$ with a transversal CR $S^1$ action can be identified with the circle orbibundle of a positive holomorphic orbifold line bundle $(L, h) \rightarrow M$
	over a compact complex Hodge orbifold $M$. By this identification the $k$-th Fourier component of the Szeg\H{o} kernel function and the Bergman kernel function for the $k$-th tensor power of the line bundle $L$ are equal up to a constant factor of $2\pi$. Hence in this paper we obtained a phase function version of the Bergman kernels expansions on orbifolds (see \cite[Theorems 5.4.10, 5.4.11]{mm} or \cite[Theorem 1.4]{dlm}  for another version). 
	
	\section{Preliminaries}
	\label{sec: orbifold}
	
	\subsection{Orbifolds}
	\label{sec:orbi}
	
	We recall basic definitions we need about orbifolds, for a more precise discussion see \cite{mm} and \cite{alr} and references therein. Define $\mathcal{M}_s$ to be the category whose objects are pairs $(M, G)$, where $M$ is a connected smooth manifold and $G$ is a finite group acting smoothly on $M$, and whose morphisms $\Phi\,:\,(M,G)\rightarrow (M',G')$ are families of open embeddings $\varphi:\,M\rightarrow M'$ such that for each $\varphi\in \Phi$ there exists an injective group homomorphism $\lambda_{\varphi}: G\rightarrow G'$ satisfying
	\[\varphi(g\cdot x)=\lambda_{\varphi}(g)\cdot \varphi(x),\qquad x\in M,\,g\in G \,. \]
	Let $X$ be a paracompact Hausdorff space and let $\mathcal{U}$ be a covering of $X$ consisting of connected open subsets. Then we say that the topological space $X$ has an orbifold structure if 
	\begin{itemize}
		\item[i)] for each $U\in \mathcal{U}$ there exists $(\widetilde{U},G_U)\in \mathcal{M}_s$ and a ramified covering $(\widetilde{U},G_U)\rightarrow U$ such that $U\cong \widetilde{U}/G_U$;  
		\item[ii)] for any $U,\,V \in \mathcal{U}$ there exists a morphism $\Phi_{V,U}: (\widetilde{U},G_U) \rightarrow (\widetilde{V},G_V)$ which covers the inclusion  $U\subset V$ and satisfies $\varphi_{WU}= \varphi_{WV}\circ \varphi_{VU}$ for any $U,\,V,\,W \in \mathcal{U}$ such that $U\subset V \subset W$.
	\end{itemize}
	
	We recall that for each $x\in X$ one can always find local coordinates $(\widetilde{U}, G_U)$, $\widetilde{U}\subseteq \mathbb{R}^n$, such that $x$ is a fixed point of $G_U$ and $G_U$ acts linearly on $\mathbb{R}^n$, $x\equiv 0\in \mathbb{R}^n$. If $\vert G_U\rvert > 1$ then we say that $x$ is a singular point, otherwise we say that $x$ is regular. There exists an open and dense subset in $X$ on which all the points have conjugated stabilizers (see \cite{dk} p. 116), thus if the action of $G$ on $M$ in the definition of $\mathcal{M}_s$ is assumed to be effective as in \cite{mm}, then set of regular points is open and dense in $X$. 
	
	
	A \textit{vector orbibundle} $(E,p)$ on an orbifold $X$ is an orbifold $E$ together with a continuous projection $p:E\rightarrow X$ such that for $U\in \mathcal{U}$ there exists a $G_U$-equivariant lift $\widetilde{p}\,: \widetilde{E}_{U}\rightarrow \widetilde{U}$ defining a trivial vector bundle such that the diagram
	\begin{align*}
		(\widetilde{E}_U,&G^E_U) \xrightarrow{\widetilde{p}} (\widetilde{U}, G_U)   \\
		\downarrow& \qquad\qquad \downarrow \\
		E_U& \quad\,\,\,\,\xrightarrow{p}  \quad U
	\end{align*}
	commutes. As before, when we say that $\widetilde{p}\,: \widetilde{E}_{U}\rightarrow \widetilde{U}$ is $G_U$-equivariant we mean that there exists an injective group homomorphism $\lambda_{p}: G^E_U \rightarrow G_U$ such that
	\[\widetilde{p}(g\cdot e)=\lambda_{p}(g)\cdot \widetilde{p}(e)\qquad e\in \widetilde{E}_U,\,g\in G_U^E \,;\]
	if $\lambda_{p}$ is a group isomorphism, we say that the vector orbibundle is \textit{proper}.
	
	A smooth section of a vector orbibundle $(E,p)$ over $X$ is a smooth map $s: X \rightarrow E$ such that $p\circ s = 1_X$, we denote the space of smooth sections as $\mathcal{C}^\infty(X, E)$. We recall that a \textit{smooth map} $f:X\rightarrow Y$ between orbifolds is a continuous map between the underlying topological spaces such that for each $U\in \mathcal{U}$ there exists a local lift $(\widetilde{f}_U, \overline{f}_U)$ such that $\widetilde{f}_U : \widetilde{U}\rightarrow \widetilde{V}$ is smooth, $\overline{f}_U: G_U\rightarrow G_V$ is a group homomorphism, the diagram
	\begin{align*}
		(\widetilde{U},& G_U) \xrightarrow{\widetilde{f}} (\widetilde{V},G_V) \\
		\downarrow& \qquad\qquad \downarrow \\
		U& \quad\,\,\,\,\xrightarrow{f} \quad V
	\end{align*}
	commutes and $\widetilde{f}_U$ is $\overline{f}_U$-equivariant:
	\[\widetilde{f}_U(g\cdot x)= \overline{f}_U(g)\widetilde{f}_U(x)\,,\qquad g\in G_U,\,x\in\widetilde{U}\,. \]

	\begin{prop}
		The tangent orbibundle $TX$ is defined by gluing together the bundles defined over the charts 
		\[T_{\widetilde{x}}\widetilde{U} \rightarrow T_xU:= T_{\widetilde{x}}\widetilde{U}/G_U\,,  \]
		where the action of $G_U$ is induced by differentiating the action on $\widetilde{U}$, $\mathrm{d}g$. It has a natural structure of proper vector orbibundle. The smooth sections of it are called vector fields. 
	\end{prop}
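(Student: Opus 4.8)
The statement is in effect the assertion that the local data $\{(T\widetilde U,G_U)\}_{U\in\mathcal U}$, with transition maps obtained by differentiation, assemble into a proper vector orbibundle, so the plan is to verify axioms (i) and (ii) of the orbifold/vector-orbibundle structure for this collection. First I would check that each local model is a legitimate object of $\mathcal M_s$. Since $\widetilde U\subseteq\mathbb R^n$ is connected, its tangent manifold $T\widetilde U$ is again a connected smooth manifold, and the finite group $G_U$ acts on it smoothly through the differentials $\mathrm{d}g$, $g\in G_U$; hence $(T\widetilde U,G_U)\in\mathcal M_s$. The natural projection $\widetilde p\colon T\widetilde U\to\widetilde U$ is $G_U$-equivariant in the required sense, $\widetilde p(\mathrm{d}g\cdot v)=g\cdot\widetilde p(v)$, with associated homomorphism $\lambda_p\colon G^E_U:=G_U\to G_U$ equal to the identity. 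As $T\widetilde U\to\widetilde U$ is literally a trivial vector bundle over $\widetilde U$ and each $\mathrm{d}g$ restricts to a linear isomorphism $T_{\widetilde x}\widetilde U\to T_{g\widetilde x}\widetilde U$, the quotient $T_xU:=T_{\widetilde x}\widetilde U/G_U$ over a fixed point inherits a well-defined linear structure, giving the local triviality condition (i).

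Next I would produce the transition morphisms. For $U\subset V$ the orbifold structure supplies a morphism $\Phi_{V,U}\colon(\widetilde U,G_U)\to(\widetilde V,G_V)$, i.e. a family of open embeddings $\varphi\colon\widetilde U\to\widetilde V$ with injective homomorphisms $\lambda_\varphi\colon G_U\to G_V$ satisfying $\varphi(g\cdot x)=\lambda_\varphi(g)\cdot\varphi(x)$. Each $\varphi$ is an open embedding, so its differential $\mathrm{d}\varphi\colon T\widetilde U\to T\widetilde V$ is again an open embedding of total spaces (it maps $T\widetilde U$ diffeomorphically onto the open submanifold $T(\varphi(\widetilde U))$ of $T\widetilde V$); differentiating $\varphi\circ g=\lambda_\varphi(g)\circ\varphi$ and applying the chain rule gives $\mathrm{d}\varphi(\mathrm{d}g\cdot v)=\mathrm{d}(\lambda_\varphi(g))\cdot\mathrm{d}\varphi(v)$. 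Thus the family $\{\mathrm{d}\varphi\}$ together with the \emph{same} $\lambda_\varphi$ (recall the $G_V$-action on $T\widetilde V$ is $g'\mapsto\mathrm{d}(g')$) is a morphism $(T\widetilde U,G_U)\to(T\widetilde V,G_V)$ covering the inclusion and compatible with $\widetilde p$. The cocycle condition $\varphi_{WU}=\varphi_{WV}\circ\varphi_{VU}$ for $U\subset V\subset W$ yields, once more by the chain rule, $\mathrm{d}\varphi_{WU}=\mathrm{d}\varphi_{WV}\circ\mathrm{d}\varphi_{VU}$, which is precisely condition (ii) for the differentiated data. Hence $TX$, defined as the quotient of $\bigsqcup_U T\widetilde U$ by the relation induced by the $\mathrm{d}\varphi$ and the $G_U$-actions, is a vector orbibundle over $X$ with projection induced by the $\widetilde p$, and the defining diagram commutes by construction.

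Properness is then immediate: the associated homomorphism $\lambda_p$ computed in the first step is the identity of $G_U$, hence an isomorphism, so $TX$ is a \emph{proper} vector orbibundle; and a smooth section $s\colon X\to TX$ with $p\circ s=1_X$ is by definition a vector field, as asserted. I expect the only genuinely delicate point to be bookkeeping rather than mathematics: the transition data are \emph{families} of open embeddings, not single maps, so one must check that differentiation is carried out consistently across the whole family and that $\{\mathrm{d}\varphi\}$ still constitutes a single morphism in $\mathcal M_s$, together with confirming that the $\mathrm{d}g$-action is smooth on the total space and restricts linearly to fibers. Everything else reduces to the chain rule and the functoriality of the tangent construction.
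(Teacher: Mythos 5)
The paper states this proposition without proof, treating the construction as routine; your verification is exactly the standard argument it implicitly relies on (local models in $\mathcal M_s$, differentiated transition embeddings with the same $\lambda_\varphi$, cocycle condition by the chain rule, properness from $\lambda_p=\mathrm{id}$), and it is correct. One small imprecision: the quotient $T_{\widetilde x}\widetilde U/G_U$ is not literally a vector space when $G_U$ acts nontrivially on the fiber, but this does not matter because the paper's local triviality condition is imposed on the equivariant lift $\widetilde p\colon T\widetilde U\to\widetilde U$ being a trivial vector bundle, which is what you actually verify.
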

	
	We can then define the \textit{differential} of a smooth map $f : X \rightarrow Y$ to be the smooth map $\mathrm{d}f: TX \rightarrow TY$ such that locally for a given $x\in U$, $f(x)\in V$ we have a commutative diagram
	\begin{align*}
		(T_{\widetilde{x}}\widetilde{U},& G_U) \xrightarrow{\mathrm{d}_{\widetilde{x}}\widetilde{f}} (T_{\widetilde{f}(\widetilde{x})}\widetilde{V},G_V) \\
		\downarrow& \qquad\qquad \,\,\,\downarrow \\
		T_xU& \quad\,\,\,\,\xrightarrow{\mathrm{d}_xf} \,\, T_{f(x)}V
	\end{align*}
	where $\mathrm{d}_{\widetilde{x}}\widetilde{f}$ is $\overline{f}_U$-equivariant. We say that $f$ is an \textit{immersion} at $x \in X$ if $\mathrm{d}f$ is injective, so this means that there is a local lift $\widetilde{f}_U : \widetilde{U} \rightarrow \widetilde{V}$ such that both $\mathrm{d}_{\widetilde{x}}\widetilde{f}$ and $\overline{f}_U$ are injective. 
	
	There are different ways to construct new vector bundles out of old, see \cite{mist} p. $29$ and also \cite{alr} p. $14$. Given any continuous functor $\mathcal{F}$ from vector spaces to vector spaces we obtain a new orbibundle $\mathcal{F}(TX)\rightarrow X$ with fibers $\mathcal{F}(T_{\widetilde{x}}\widetilde{U})/G_U$. In particular, this allows us to construct the cotangent orbibundle $T^*X$ and exterior powers $\wedge^q T^*X$. We recall the definition of sub-orbibundle that we will require later to define $T^{1,0}X$. A \textit{sub-orbibundle} $(F,p_{\vert F})$ of a vector orbibundle $(E,p)$ is a suborbifold of $E$ being a vector bundle on $X$. We recall that a orbifold $Y$ is a \textit{suborbifold} of an orbifold $X$ if there exists an immersion $\iota : Y \rightarrow X$, we identify $Y$ with its image in $X$. 
	
	\begin{exmp}
		Consider an action of a finite group $G$ on $\mathbb{R}^{2n+1}$. A form on the orbifold $\mathbb{R}^{2n+1}/G$ is a $G$-invariant form on $\mathbb{R}^{2n+1}$.
	\end{exmp}
	
	At this point we can give the definition of exterior derivative. Consider a chart $(\widetilde{U},G_U)\rightarrow U$ of an orbifold $X$, a $k$-form $\omega$ defines a $G_U$-invariant $k$-form $\omega_U$ on $\widetilde{U}$, locally this is given by 
	\[ \omega_U:= f_U\,\mathrm{d}x^{j_1}\wedge \mathrm{d}x^{j_2}\wedge \dots \wedge \mathrm{d}x^{j_k}\qquad \text{ on } \widetilde{U}\subseteq \mathbb{R}^n\,,  \]
	where $f_U$ is a $G_U$-invariant function on $\widetilde{U}$. The exterior derivative of $\omega$ is given locally by
	\[ \mathrm{d}\omega_U:= \sum_{i=1}^n \frac{\partial f_U}{\partial x^i} \mathrm{d}x^i\wedge \mathrm{d}x^J\text{ on } \widetilde{U}\subseteq \mathbb{R}^n\,, \]
	where $J$ is the multi-index $J=(j_1,\dots j_n)$. It is easy to check that it defines a $(k+1)$-form which is invariant under the action of $G_U$. 
	
	If $X$ is an oriented $n$ dimensional orbifold, we define the integral of a smooth section $\omega$ of the orbibundle $\Lambda^n T^*X$ as follows. Suppose that $\mathrm{supp}(\omega)$ is compactly supported on $U \in \mathcal{U}$, we define 
	\[\int_U \omega := \frac{1}{\lvert G_U\rvert}\int_{\widetilde{U}} \widetilde{\omega}_U\,. \]
	In the general case, for every open cover of $X$ by positively oriented charts, $(U_i, G_{U_i})$, we have a partition of unity, $(\rho_i)_{i\in I}$ subordinate to this cover. We define,
	\begin{equation}\label{eq:integral}\int_X \omega := \sum_{i\in I}\frac{1}{\lvert G_{U_i}\rvert}\int_{\widetilde{U}_i} \widetilde{\rho}_i\widetilde{\omega}_{U_i}\,. 
	\end{equation}
	It remains to show that it does not depend on the choice of open cover and on the choice of partition of unity. Now, given another open cover $((V_j,G_{V_j}))_{j\in J}$ and an associated partition of unity $(\sigma_j)_{j\in J}$, since $\mathrm{supp}(\rho_i\sigma_j\omega)\subseteq U_i\cap V_j$ we have 
	\[\frac{1}{\lvert G_{U_i}\rvert}\int_{\widetilde{U}_i} \widetilde{\rho}_i\widetilde{\sigma}_{j,i}\widetilde{\omega}_{U_i} = \frac{1}{\lvert G_{V_j}\rvert}\int_{\widetilde{V}_j} \widetilde{\rho}_{i,j}\widetilde{\sigma}_j\omega_{V_j} \]
	where $\widetilde{\sigma}_{j,i}$ is defined on $\widetilde{U}_i$ as the composition between $\sigma_{j}: U_i \rightarrow \mathbb{R}$ and the covering $\widetilde{U}_i\rightarrow U_i$, in a similar way $\widetilde{\rho}_{i,j}$ is defined. Now,
	\begin{align*}\sum_{i\in I}\frac{1}{\lvert G_{U_i}\rvert}\int_{\widetilde{U}_i} \widetilde{\rho}_i\widetilde{\omega}_{U_i} &=\sum_{i\in I}\sum_{j\in J}\frac{1}{\lvert G_{U_i}\rvert}\int_{\widetilde{U}_i} \widetilde{\rho}_i\widetilde{\sigma}_{j,i}\widetilde{\omega}_{U_i} \\
		& = \sum_{i\in I}\sum_{j\in J} \frac{1}{\lvert G_{V_j}\rvert}\int_{\widetilde{V}_j} \widetilde{\rho}_{i,j}\widetilde{\sigma}_j\omega_{V_j} \\
		& = \sum_{j\in J}\frac{1}{\lvert G_{V_j}\rvert}\int_{\widetilde{V}_j} \widetilde{\sigma}_j\widetilde{\omega}_{V_j}\,,
	\end{align*}
	where we use the fact that 
	\[ \sum_{j\in J}\widetilde{\sigma}_{j,i} = 1,\quad  \sum_{i\in I} \widetilde{\rho}_{i,j}=1\,. \]
	
	\subsection{CR orbifolds}
	
	In this section, we recall the definition of CR orbifolds. We also refer the reader to~\cite[Section 5]{khh}. 
	Let $X$ be a compact, connected and orientable orbifold of dimension $2n+1$, $n\geq 1$, a CR structure on $X$ is a sub-orbibundle $T^{1,0}X$ of rank $n$ of the complexified orbifold tangent bundle $TX\otimes \mathbb{C}$ (which is defined locally for each $x\in X$ by taking $T_{\tilde{x}}\widetilde{U}\otimes \mathbb{C}$), satisfying $T^{1,0}X\cap T^{0,1}X=\{0\}$, where $T^{0,1}X=\overline{T^{1,0}X}$, and $[\mathcal V,\mathcal V]\subset\mathcal V$, where $\mathcal V$ is the space of smooth sections of the orbibundle $T^{0,1}X$. We also fix a Hermitian metric $\langle\cdot \vert \cdot \rangle$ on $TX\otimes\mathbb{C}$ so that $T^{1,0}X\perp T^{0,1}X$, as before we mean that $\langle\cdot\vert \cdot \rangle$ is defined locally on each local chart $U$ as $G_U$-invariant Hermitian metric on $T_{\tilde{x}}\widetilde{U}\otimes \mathbb{C}$; from now on when it is clear how to define a geometric structure on an orbibundle we will not always recall the definition. Furthermore, notice that every Hermitian metric $h'$ on $\widetilde{U}$ gives rise to a $G_U$-invariant metric $h_U$ on $\widetilde{U}$ by averaging over the group:
	\[h_{{U}}(v,\, w):= \sum_{g\in G_U} h'(\mathrm{d}_{\widetilde{x}}g(v),\,\mathrm{d}_{\widetilde{x}}g(w)),\qquad v,w \in T_{\widetilde{x}}\widetilde{U}\,. \]
	For $u \in TX\otimes \mathbb{C}$, we write $|u|^2 := \langle\, u\, |\, u\, \rangle$.
	
	There is a unique sub-orbibundle $HX$ of $TX$ such that $HX\otimes \mathbb{C}=T^{1,0}X \oplus T^{0,1}X$, i.e. $HX$ is the real part of $T^{1,0}X \oplus T^{0,1}X$. Let $J:HX\To HX$ be the complex structure map given by $J(u+\ol u)=\imath u-\imath\ol u$, for every $u\in T^{1,0}X$. 
	By complex linear extension of $J$ to $TX\otimes \mathbb{C}$, the $\imath$-eigenspace of $J$ is $T^{1,0}X \, = \, \left\{ V \in HX\otimes \mathbb{C} \, ;\, JV \, =  \,  \imath V  \right\}.$ We shall also write $(X, HX, J)$ to denote a CR orbifold.
	
	We  fix a real non-vanishing $1$-form $\omega_0\in\mathcal{C}^{\infty}(X,T^*X)$ so that $\langle\,\omega_0(x)\,,\,u\,\rangle=0$, for every $u\in H_xX$, for every $x\in X$. 
	For each $x \in X$, we define a quadratic form on $HX$ by
	\[
	\mathcal{L}_x(U,V) =\frac{1}{2}\mathrm{d}\omega_0(JU, V), \qquad \forall \ U, V \in H_xX.
	\]
	We extend $\mathcal{L}$ to $HX\otimes \mathbb{C}$ by complex linear extension. Then, for $U, V \in T^{1,0}_xX$,
	\begin{equation}\label{e-gue210813yyd}
		\mathcal{L}_x(U,\overline{V}) = \frac{1}{2}\mathrm{d}\omega_0(JU, \overline{V}) = -\frac{1}{2i}\mathrm{d}\omega_0(U,\overline{V}).
	\end{equation}
	The Hermitian quadratic form $\mathcal{L}_x$ on $T^{1,0}_xX$ is called Levi form at $x$. This means that for each local coordinate patch $(\widetilde{U},G_U)\rightarrow U$ there exists a $G_U$-invariant Hermitian quadratic form on $(T_{\widetilde{x}}\widetilde{U},G_U)$. In this paper, we always assume that the Levi form $\mathcal{L}$ on $T^{1,0}X$ is non-degenerate of constant signature $(n_-,n_+)$ on $X$, where $n_-$ denotes the number of negative eigenvalues of the Levi form and $n_+$ denotes the number of positive eigenvalues of the Levi form. Let $R\in\mathcal{C}^\infty(X,TX)$ be the non-vanishing vector field determined by 
	\begin{equation}\label{e-gue170111ry}\begin{split}
			\omega_0(R)=-1,\quad
			\mathrm{d}\omega_0(R,\cdot)\equiv0\ \ \mbox{on $TX$}.
	\end{split}\end{equation}
	Note that $X$ is a contact orbifold with contact form $\omega_0$, contact plane $HX$ and $R$ is the Reeb vector field. We have $\langle\,R\,|\,R\,\rangle=1$ and $R$ is orthogonal to $T^{1,0}X\oplus T^{0,1}X$. 
	
	Denote by $T^{*1,0}X$ and $T^{*0,1}X$ the dual bundles of $T^{1,0}X$ and $T^{0,1}X$, respectively. They can be identified with sub-orbibundles of the complexified cotangent orbibundle $T^*X\otimes \mathbb{C}$. Define the vector orbibundle of $(0,q)$-forms by $T^{*0,q}X := \wedge^qT^{*0,1}X$. The Hermitian metric $\langle\, \cdot\, |\, \cdot \,\rangle$ on $TX\otimes \mathbb{C}$ induces, by duality, a Hermitian metric on $T^*X\otimes \mathbb{C}$ and also on the orbibundles of $(0,q)$ forms $T^{*0,q}X, q=0, 1, \cdots, n$. We shall also denote all these induced metrics by $\langle \,\cdot \,|\, \cdot \,\rangle$. For $u\in T^{*0,q}X$, we write $\abs{u}^2:=\langle\,u\,|\,u\,\rangle$. Note that we have the point-wise orthogonal decompositions:
	\[
	\renewcommand{\arraystretch}{1.2}
	\begin{array}{c}
		T^*X\otimes \mathbb{C} = T^{*1,0}X \oplus T^{*0,1}X \oplus \left\{ \lambda \omega_0;\, \lambda \in \mathbb{C} \right\}, \\
		TX\otimes \mathbb{C} = T^{1,0}X \oplus T^{0,1}X \oplus \left\{ \lambda R;\, \lambda \in \mathbb{C} \right\}.
	\end{array}
	\]
	
	Let $U$ be an open set of $X$. Let $\Omega^{0,q}(U)$ denote the space of smooth sections of $T^{*0,q}X$ over $D$ and let $\Omega^{0,q}_c(U)$ be the subspace of $\Omega^{0,q}(U)$ whose elements have compact support in $U$; more precisely an element $u$ in $\Omega^{0,q}_c(U)$ has a lift $\tilde{u}\in \Omega^{0,q}_c(\tilde{U})$ which is $G_U$-invariant. The metric on $TX\otimes\mathbb{C}$ induces a metric on $\Lambda^q(T^{*}X\otimes \mathbb{C})$, so we have a well-defined orthogonal projector
	\[\pi^{0,q}\,:\,\Lambda^q(T^{*}X\otimes \mathbb{C})\rightarrow T^{*0,q}X\,.  \]
	We can define the tangential Cauchy-Riemann operator $\overline{\partial}_b$ by composing with the exterior differential
	\[\overline{\partial}_b=\pi^{0,q+1}\circ \mathrm{d}\,:\,\Omega^{0,q}(X)\rightarrow \Omega^{0,q+1}(X)\,,\]
	thus, on a local chart $(\widetilde{U},G_U)\rightarrow U$, $\overline{\partial}_b$ acts on $G_U$-invariant functions on $\widetilde{U}$ and commutes with the action of $G_U$ on $\tilde{U}$.
	Let $\mathrm{dV}_X(x)$ be the volume form on $X$ induced by the Hermitian metric $\langle\,\cdot\,|\,\cdot\,\rangle$. For a local chart $(\widetilde{U},G_U)\rightarrow U$, let $\mathrm{dV}_{\widetilde U}(x)$ be the volume form on $\widetilde U$ induced by the Hermitian metric $\langle\,\cdot\,|\,\cdot\,\rangle$.
	The natural global $L^2$ inner product $(\,\cdot\,|\,\cdot\,)$ on $\Omega^{0,q}(X)$ 
	induced by $\mathrm{dV}_X(x)$ and $\langle\,\cdot\,|\,\cdot\,\rangle$ is given by
	\[
	(\,u\,|\,v\,):=\int_X\langle\,u(x)\,|\,v(x)\,\rangle\, \mathrm{dV}_X(x)\,,\quad u,v\in\Omega^{0,q}(X)\,.
	\]
	We denote by $L^2_{(0,q)}(X)$ the completion of $\Omega^{0,q}(X)$ with respect to $(\,\cdot\,|\,\cdot\,)$. It should be clear that the elements in  $L^2_{(0,q)}(X)$ are $L^2$ sections of the orbibundle $\Lambda^q(T^{*}X\otimes \mathbb{C})$, the integral appearing in the definition of the $L^2$ inner product $(\,\cdot\,|\,\cdot\,)$ is meant to be interpreted as in the previous section, see \eqref{eq:integral}.
	We extend $(\,\cdot\,|\,\cdot\,)$ to $L^2_{(0,q)}(X)$ 
	in the standard way. For $f\in L^2_{(0,q)}(X)$, we denote $\norm{f}^2:=(\,f\,|\,f\,)$.
	We extend
	$\ddbar_{b}$ to $L^2_{(0,r)}(X)$, $r=0,1,\ldots,n$, by
	\[
	\ddbar_{b}:{\rm Dom\,}\ddbar_{b}\subset L^2_{(0,r)}(X)\To L^2_{(0,r+1)}(X)\,,
	\]
	where ${\rm Dom\,}\ddbar_{b}:=\{u\in L^2_{(0,r)}(X);\, \ddbar_{b}u\in L^2_{(0,r+1)}(X)\}$ and, for any $u\in L^2_{(0,r)}(X)$, $\ddbar_{b} u$ is defined in the sense of distributions.
	We also write
	\[
	\ol{\pr}^{*}_{b}:{\rm Dom\,}\ol{\pr}^{*}_{b}\subset L^2_{(0,r+1)}(X)\To L^2_{(0,r)}(X)
	\]
	to denote the Hilbert space adjoint of $\ddbar_{b}$ in the $L^2$ space with respect to $(\,\cdot\,|\,\cdot\, )$.
	Let $\Box^{(q)}_{b}$ denote the (Gaffney extension) of the Kohn Laplacian given by
	\begin{equation}\label{e-suIX}
		\renewcommand{\arraystretch}{1.2}
		\begin{array}{c}
			{\rm Dom\,}\Box^{(q)}_{b}=\Big\{s\in L^2_{(0,q)}(X);\, 
			s\in{\rm Dom\,}\ddbar_{b}\cap{\rm Dom\,}\ol{\pr}^{*}_{b},\,
			\ddbar_{b}s\in{\rm Dom\,}\ol{\pr}^{*}_{b}, \, \ol{\pr}^{*}_{b}s\in{\rm Dom\,}\ddbar_{b}\Big\}\,,\\
			\Box^{(q)}_{b}s=\ddbar_{b}\ol{\pr}^{*}_{b}s+\ol{\pr}^{*}_{b}\ddbar_{b}s
			\:\:\text{for $s\in {\rm Dom\,}\Box^{(q)}_{b}$}\,.
		\end{array}
	\end{equation}
	
	\subsection{Group actions on orbifolds}\label{s-gue220801yyd}
	Consider a compact connected Lie group action $G$. A smooth action of $G$ on an orbifold $X$ is a continuous action $\cdot$ between the underlying topological spaces such that for each $g\in G$ the map $g\cdot\,:\,X\rightarrow X$ is smooth by mean of definition in Section \ref{sec:orbi}: this means that locally $(G_U,\widetilde{U})\rightarrow U$, for each $g\in G$, there exists a local lift $(\widetilde{g\cdot}_U, \overline{g\cdot}_U)$ such that $\widetilde{g\cdot}_U : \widetilde{U}\rightarrow \widetilde{V}$ is smooth, $\overline{g\cdot}_U: G_U\rightarrow G_V$ is a group homomorphism, the diagram
	\begin{align*}
		(\widetilde{U},& G_U) \xrightarrow{\widetilde{g\cdot}} (\widetilde{V},G_V) \\
		\downarrow& \qquad\qquad \downarrow \\
		U& \quad\,\,\,\,\xrightarrow{g\cdot} \quad V
	\end{align*}
	commutes and $\widetilde{g\cdot}_U$ is $\overline{g\cdot}_U$-equivariant:
	\[\widetilde{g\cdot}_U(g\cdot x)= \overline{g\cdot}_U(g)\widetilde{g\cdot}_U(x)\,,\qquad g\in G_U,\,x\in\widehat{U}\,. \]
	
	In this work, when $X$ admits a compact Lie group action $G$, we suppose that  
	
	\begin{ass}\label{a-gue170123I}
		We assume $g^\ast\omega_0=\omega_0$ on $X$ and $g_\ast J=Jg_\ast$ on $HX$, for every $g\in G$, where $g^*$ and $g_*$ denote  the pull-back map and push-forward map of $G$, respectively. 
	\end{ass}
	
	Let $\mathfrak{g}$ denote the Lie algebra of $G$ and $e$ denotes the identity element in $G$. For a moment denote by $f:G\times X \rightarrow X$ the action, so that $f(g,x)=g\cdot x$. Fix $x\in X$, define $$\xi_X(x)=\mathrm{d}_{e}f(\cdot,x)\left(\xi\right)\,.$$
	For any $\xi \in \mathfrak{g}$,  $\xi_X$ is the so called vector field on $X$ induced by $\xi$.
	
	By the motivation explained in the introduction, in Theorem~\ref{t-gue220731yyd}, we assume that $X$ admits a CR and transversal $S^1$-action which is locally free, $T\in \mathcal{C}^{\infty}(X,\,TX)$ denotes the global real vector field given by the infinitesimal circle action. In this case, we will take $T$ to be our Reeb vector field $R$. Note that in Theorem~\ref{thm:main}, we do not assume that $X$ admits a group action. 
	
	\subsection{Operators on orbifolds}
	\label{sec: opor}
	
	We first introduce some notations. 
	Let $U\subset X$ be an open set and let $E$ be a vector orbibundle over $U$. As smooth case, let $\mathcal{D}'(U,E)$ denote the space of distribution sections over $U$ and let $\mathcal{E}'(U,E)$ denote the space of distribution sections over $U$ whose elements have compact support in $U$. Let $\mathcal{C}^\infty(U,E)$ denote the space of smooth sections over $U$ and let $\mathcal{C}^\infty_c(U,E)$ denote the space of smooth sections over $U$ whose elements have compact support in $U$. For $s\in\mathbb R$, let $H^s_{{\rm comp\,}}(U,E)$ denote the Sobolev space of sections over $U$ of order $s$ whose elements have compact support in $U$. Put 
	\[H^s_{{\rm loc\,}}(U,E):=\set{u\in\mathcal{D}'(U,E);\,\chi u\in H^s_{{\rm comp\,}}(U,E), \forall\chi\in\mathcal{C}^\infty_c(U)}.\]
	Let $H^s(X,E)$ denote the Sobolev space of sections over $X$ of order $s$.
	Let $F$ be anther vector orbibundle over $U$.
	We write $E\boxtimes F^*$ to denote the vector orbibundle over $U\times U$ with fiber over $U\times U$ consisting of the linear maps from $F_y$ to $E_x$. 
	
	Let $H:\mathcal{C}^\infty_c(U,E)\To\mathcal{D}'(U,E)$ be a continuous operator and let $H(x,y)\in \mathcal{D}'(U\times U,E\boxtimes E^*)$ be the distribution kernel of $H$. 
	We say that $H$ is smoothing on $U$ if $H(x,y)\in \mathcal{C}^\infty(U\times U,E\boxtimes E^*)$. When $H$ is smoothing on $U$, we write $H\equiv0$ (on $U$) or $H(x,y)\equiv0$ (on $U$). 
	More precisely, let $(\widetilde U,G_U)\To U$ be an orbifold chart. $H\equiv0$ means that we can find a $\widetilde H(x,y)\in\mathcal{C}^\infty(\widetilde U\times\widetilde U,\widetilde E\boxtimes\widetilde E^*)$ such that 
	\[H(x,y)=\sum_{h,g\in G_U}\frac{1}{\abs{G_U}}
	\widetilde H(h\cdot\widetilde x, g\cdot\widetilde y),\]
	for all $(x,y)\in U\times U$, where $\pi(\tilde x)=x$, $\pi(\tilde y)=y$, $\pi:\tilde U\To U$ is the natural projection and $\widetilde E$ is the lifting of $E$. Let $G(x,y)\in\mathcal{D}'(U\times U,E\boxtimes E^*)$. We will write $G$ to denote the continuous operator $G: \mathcal{C}^\infty_c(U,E)\To\mathcal{D}'(U,E)$ with distribution kernel $G(x,y)$.
	
	Let $H:\mathcal{C}^\infty(X,T^{*0,q}X)\To\mathcal{D}'(X,T^{*0,q}X)$ be a continuous operator. We say tat $H$ is a complex Fourier integral operator if let $(\widetilde U,G_U)\To U$ be any orbifold chart of $X$, there is a complex Fourier integral operator 
	$\widetilde H_U: \Omega^{0,q}(\tilde U)\To\Omega^{0,q}(\tilde U)$ has a distribution kernel $\widetilde H_U(\widetilde x,\widetilde y)\in\mathcal{D}'(\widetilde U\times\widetilde U,T^{*0,q}X\boxtimes(T^{*0,q}X)^*)$ 
	such that 
	\[ {H}(x,y)\equiv\frac{1}{\abs{G_U}}\sum_{g, h\in G_U}  \widetilde{H}_{{U}}(h\cdot\widetilde{x}, g\cdot\widetilde{y}) \]
	for $(x,y)\in U\times U$, where $\pi(\tilde x)=x$, $\pi(\tilde y)=y$, $\pi:\tilde U\To U$ is the natural projection. 
	
	
	We recall H\"ormander symbol space. Since we work on a given $\widetilde{U}\subset \mathbb{R}^{2n+1}$, we use notation $x$ for an element $x \in \widetilde{U}$ instead of $\tilde{x}$. Let $\widetilde{U}\subset \mathbb{R}^{2n+1}$ be a local coordinate patch with local coordinates $x=(x_1,\ldots,x_{2n+1})$. We recall the following definition.
	
	\begin{defn}\label{d-gue140221a}
		For $m\in\Real$, $S^m_{1,0}(\widetilde{U}\times \widetilde{U}\times\mathbb{R}_+,T^{*0,q}X\boxtimes(T^{*0,q}X)^*)$ 
		is the space of all  smooth functions $a(x,y,t)\in\mathcal{C}^\infty(\widetilde{U}\times \widetilde{U}\times\mathbb{R}_+,T^{*0,q}X\boxtimes(T^{*0,q}X)^*)$ 
		such that, for all compact $K\Subset \widetilde{U}\times \widetilde{U}$ and all $\alpha, \beta\in\mathbb N^{2n+1}_0$, $\gamma\in\mathbb N_0$, 
		there is a constant $C_{\alpha,\beta,\gamma}>0$ such that 
		\[\abs{\pr^\alpha_x\pr^\beta_y\pr^\gamma_t a(x,y,t)}\leq C_{\alpha,\beta,\gamma}(1+\abs{t})^{m-\gamma},\ \ 
		\mbox{for every $(x,y,t)\in K\times\Real_+, t\geq1$}.\]
		Put 
		\[
		S^{-\infty}(\widetilde{U}\times \widetilde{U}\times\mathbb{R}_+,T^{*0,q}X\boxtimes(T^{*0,q}X)^*) :=\bigcap_{m\in\Real}S^m_{1,0}(\widetilde{U}\times \widetilde{U}\times\mathbb{R}_+,T^{*0,q}X\boxtimes(T^{*0,q}X)^*).
		\]
		Let $a_j\in S^{m_j}_{1,0}(\widetilde{U}\times \widetilde{U}\times\mathbb{R}_+,T^{*0,q}X\boxtimes(T^{*0,q}X)^*)$, 
		$j=0,1,2,\ldots$ with $m_j\To-\infty$, as $j\To\infty$. 
		Then there exists $a\in S^{m_0}_{1,0}(\widetilde{U}\times \widetilde{U}\times\mathbb{R}_+,T^{*0,q}X\boxtimes(T^{*0,q}X)^*)$ 
		unique modulo $S^{-\infty}$, such that 
		$a-\sum^{k-1}_{j=0}a_j\in S^{m_k}_{1,0}(\widetilde{U}\times \widetilde{U}\times\mathbb{R}_+,T^{*0,q}X\boxtimes(T^{*0,q}X)^*\big)$ 
		for $k=0,1,2,\ldots$. 
		
		If $a$ and $a_j$ have the properties listed above, then we write $a\sim\sum^{\infty}_{j=0}a_j$ (in the space
		$S^{m_0}_{1,0}\big(\tilde{U}\times \tilde{U}\times\mathbb{R}_+,T^{*0,q}X\boxtimes(T^{*0,q}X)^*\big)$ ). 
		Furthermore, we write
		\[
		s(x, y, t)\in S^{m}_{{\rm cl\,}}(\widetilde{U}\times \widetilde{U}\times\mathbb{R}_+,T^{*0,q}X\boxtimes(T^{*0,q}X)^*)
		\]
		if $s(x, y, t)\in S^{m}_{1,0}(\widetilde{U}\times \widetilde{U}\times\mathbb{R}_+,T^{*0,q}X\boxtimes(T^{*0,q}X)^*)$ and 
		\[
		\renewcommand{\arraystretch}{1.2}
		\begin{array}{lc}
			&s(x, y, t)\sim\sum^\infty_{j=0}s^j(x, y)t^{m-j}\text{ in }S^{m}_{1, 0}
			(\widetilde{U}\times \widetilde{U}\times\mathbb{R}_+\,,T^{*0,q}X\boxtimes(T^{*0,q}X)^*)\,,\\
			&s^j(x, y)\in\mathcal{C}^\infty(\widetilde{U}\times \widetilde{U},T^{*0,q}X\boxtimes(T^{*0,q}X)^*),\ j\in\N_0.\end{array}
		\]
		We simply write $S^{m}_{1,0}$ to denote $S^m_{1,0}(\widetilde{U}\times \widetilde{U}\times\mathbb{R}_+,T^{*0,q}X\boxtimes(T^{*0,q}X)^*)$, $m\in\mathbb R\cup\set{-\infty}$.  
	\end{defn} 
	
	Let $\widetilde E$ be a vector bundle over $\widetilde U$. Let $m\in\Real$, $0\leq\rho,\delta\leq1$. Let $S^m_{\rho,\delta}(T^*\widetilde U,\widetilde E)$
	denote the H\"{o}rmander symbol space on $T^*\widetilde U$ with values in $\widetilde E$ of order $m$ type $(\rho,\delta)$
	and let $S^m_{{\rm cl\,}}(T^*\widetilde U,\widetilde E)$
	denote the space of classical symbols on $T^*\widetilde U$ with values in $\widetilde E$ of order $m$. Let $L^m_{\rho,\delta}(\widetilde U,\widetilde E\boxtimes\widetilde E^*)$ ($L^m_{{\rm cl\,}}(\widetilde U,\widetilde E\boxtimes\widetilde E^*)$) be the space of pseudodifferential operators on $\widetilde U$ of order $m$ from sections of $E$ to sections of $E$ with full symbol $a\in S^m_{\rho,\delta}(T^*\widetilde U,\widetilde E)$ ($a\in S^m_{{\rm cl\,}}(T^*\widetilde U,\widetilde E)$). Let $E$ be a vector orbibundle over $X$. Let $A: \mathcal{C}^\infty(X,E)\To\mathcal{D}'(X,E)$ be a continuous operator with distribution 
	kernel $A(x,y)\in\mathcal{D}'(X\times X,E\boxtimes E^*)$. We write $A\in L^m_{\rho,\delta}(X,E\otimes E^*)$ ($A\in L^m_{{\rm cl\,}}(X,E\otimes E^*)$) if for any orbifold chart $(\widetilde U,G_U)\To U$, there is a $\widetilde A_U\in L^m_{\rho,\delta}(\widetilde U,\widetilde E\boxtimes\widetilde E^*)$ ($\widetilde A_U\in L^m_{{\rm cl\,}}(\widetilde U,\widetilde E\boxtimes\widetilde E^*)$) such that 
	\[ A(x,y)\equiv\frac{1}{\abs{G_U}}\sum_{g, h\in G_U}  \widetilde{A}_{{U}}(h\cdot\widetilde{x}, g\cdot\widetilde{y}) \]
	for $(x,y)\in U\times U$, where $\pi(\tilde x)=x$, $\pi(\tilde y)=y$, $\pi:\tilde U\To U$ is the natural projection and $\widetilde E$ is the lifting of $E$.

	\subsection{Microlocal Szeg\H{o} kernel}
	
	On each open set $U\subseteq X$, we define operators $S^{(q)}_U$ having the same microlocal structure described in our main results. We call this operators \textit{approximate Szeg\H{o} kernels}. We shall review some results in~\cite{hsiao} concerning the existence of a microlocal Hodge decomposition of the Kohn Laplacian on an open set of a CR manifold where the Levi form is non-degenerate. Since the proofs are local we can carry out the same analysis on the chart $(G_U, \widetilde{U})$ for a given open set $U$. Since we work on a fix $\widetilde{U}\subset \mathbb{R}^{2n+1}$, we use notation $x$ for an element $x \in \widetilde{U}$. For the following theorems we refer to Chapter 6, Chapter 7 and Chapter 8 of Part I in \cite{hsiao}.
	
	\begin{thm} \label{thm:BoxAequalsI}
		We assume that the $G_U$-invariant Levi form is non-degenerate of constant signature $(n_-,n_+)$ at each point of $\widetilde{U}$. Let $q\neq n_-, n_+$. Then, there is a properly supported operator 
		$A_{\widetilde U}\in L^{-1}_{\frac{1}{2},\frac{1}{2}}(\widetilde{U},T^{*0,q}X\boxtimes(T^{*0,q}X)^*)$ such that $\Box^{(q)}_bA_{\widetilde U}=I+F_{\widetilde U}$ on $\widetilde{U}$,
		$F_{\widetilde U}$ is a smoothing operator on $\widetilde U$.
	\end{thm}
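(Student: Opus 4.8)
The plan is to build $A_{\widetilde U}$ as a microlocal parametrix for $\Box^{(q)}_b$, exploiting the fact that in the degree $q\neq n_-,n_+$ the only obstruction to inverting $\Box^{(q)}_b$ comes from its behaviour along the characteristic manifold, where it turns out to be invertible. Write $\Sigma=\Sigma^+\cup\Sigma^-$ for the characteristic set, with $\Sigma^{\pm}=\set{(x,\lambda\omega_0(x))\in T^*\widetilde U;\,\pm\lambda>0}$. A direct computation with a local orthonormal frame $L_1,\dots,L_n$ of $T^{1,0}\widetilde U$ shows that the principal symbol of $\Box^{(q)}_b$ is scalar, given by half the squared length of the $H^*X$-component of $\xi$ times the identity, and hence vanishes exactly on $\Sigma$. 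Away from $\Sigma$ the operator is elliptic of order two, so I would first take a standard parametrix $A_0\in L^{-2}_{1,0}$ with $\Box^{(q)}_bA_0\equiv I$ microlocally on $T^*\widetilde U\setminus\Sigma$.

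The core of the construction is microlocal, carried out near $\Sigma^+$ and $\Sigma^-$ separately. Using the non-degeneracy of the ($G_U$-invariant) Levi form, I would put $\Box^{(q)}_b$ in the Boutet de Monvel--Sj\"ostrand normal form: after a parabolic scaling adapted to $\Sigma$ (which is responsible for the type $(\tfrac12,\tfrac12)$), the transverse behaviour of $\Box^{(q)}_b$ is governed by a fibrewise harmonic-oscillator system obtained by quantizing the Levi quadratic form $\mathcal L_x$. The spectrum of this model on the $(0,q)$-form fibre is a sum of the shape $\abs\lambda\sum_j(2\alpha_j+1)\abs{\lambda_j(x)}+\cdots$, whose infimum over the relevant quantum numbers vanishes precisely when $q=n_+$ along $\Sigma^+$, respectively $q=n_-$ along $\Sigma^-$. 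Hence for $q\neq n_\pm$ the model is bounded below by $c\abs\lambda\sim c\abs\xi$ on a conic neighbourhood of $\Sigma^{\pm}$, so it is microlocally invertible there, with an inverse of order $-1$ lying in $S^{-1}_{1/2,1/2}$; quantizing it yields $A_\pm\in L^{-1}_{1/2,1/2}$ with $\Box^{(q)}_bA_\pm\equiv I$ microlocally near $\Sigma^{\pm}$.

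Finally I would glue the three pieces. Choosing a conic microlocal partition of unity $\chi_0,\chi_+,\chi_-\in L^0_{1,0}$ subordinate to the cover $\set{T^*\widetilde U\setminus\Sigma,\ U(\Sigma^+),\ U(\Sigma^-)}$, I set $A_{\widetilde U}:=\chi_0A_0+\chi_+A_++\chi_-A_-$. Since $L^{-2}_{1,0}\subset L^{-1}_{1/2,1/2}$ and composition with the $(1,0)$-cutoffs preserves the class, $A_{\widetilde U}\in L^{-1}_{1/2,1/2}(\widetilde U,T^{*0,q}X\boxtimes(T^{*0,q}X)^*)$; and because each piece inverts $\Box^{(q)}_b$ where its cutoff is supported while the symbols are negligible elsewhere, the remainder $F_{\widetilde U}:=\Box^{(q)}_bA_{\widetilde U}-I$ is smoothing. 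After the usual truncation of the Schwartz kernel near the diagonal we may take $A_{\widetilde U}$ properly supported, and since $\Box^{(q)}_b$ commutes with the $G_U$-action and all ingredients above are $G_U$-invariant, averaging $A_{\widetilde U}$ over $G_U$ gives a $G_U$-invariant parametrix on the chart $(\widetilde U,G_U)$.

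The main obstacle is the second step: the construction and spectral analysis of the fibrewise harmonic-oscillator model near $\Sigma$, i.e.\ proving that the transverse symbol of $\Box^{(q)}_b$ is, up to lower-order and smoothing terms, invertible in $S^{-1}_{1/2,1/2}$ exactly when $q\neq n_\pm$, with the signature $(n_-,n_+)$ entering through the ground states along $\Sigma^+$ and $\Sigma^-$. This is precisely the content developed in Part~I of \cite{hsiao}; being entirely local, it applies verbatim on $\widetilde U\subset\mathbb R^{2n+1}$.
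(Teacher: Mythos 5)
Your outline is correct and identifies the right mechanism, but it is worth noting that the paper does not prove this statement at all: it simply observes that the construction is purely local and quotes it verbatim from Chapters 6--8 of Part I of \cite{hsiao}, applied on the chart $\widetilde U\subset\mathbb R^{2n+1}$ (the $G_U$-structure plays no role in the theorem as stated; the averaging over $G_U$ only enters later, in Lemma 3.1, when $\hat A_U$ is formed). Your reconstruction of the underlying parametrix is in the style of Boutet de Monvel--Grigis--Helffer: elliptic parametrix off $\Sigma$, plus a direct microlocal inversion of the transverse harmonic-oscillator model near $\Sigma^{\pm}$, whose spectrum has a zero ground state exactly when $q=n_+$ on $\Sigma^+$ and $q=n_-$ on $\Sigma^-$ --- so that $q\neq n_{\pm}$ gives a lower bound $c\abs{\xi}$ and an inverse in $S^{-1}_{1/2,1/2}$. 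The construction actually carried out in \cite{hsiao} is instead the Menikoff--Sj\"ostrand heat-equation method: one builds $e^{-t\Box^{(q)}_b}$ as a Fourier integral operator with complex phase solving a Hamilton--Jacobi equation and obtains $A_{\widetilde U}$ by integrating in $t$; the same spectral dichotomy for the Levi eigenvalues then governs whether the symbol decays as $t\to\infty$ (no Szeg\H{o} term when $q\neq n_{\pm}$). Both routes are standard and yield the same conclusion; the heat-equation route has the advantage of simultaneously producing the approximate Szeg\H{o} kernels $S_{\pm}$ of Theorem 2.2, which your model-inversion argument would have to construct separately.

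Two small technical points. In the gluing step the microlocal cutoffs should sit to the right of the local inverses, $A_{\widetilde U}=A_0\chi_0+A_+\chi_++A_-\chi_-$: with your ordering $\chi_jA_j$ the error contains $[\Box^{(q)}_b,\chi_j]A_j\in L^{0}_{1/2,1/2}$, which is not smoothing, whereas $(\Box^{(q)}_bA_j-I)\chi_j\equiv0$ because the essential support of $\chi_j$ lies where $A_j$ is a microlocal right inverse. Also, after truncating near the diagonal to make $A_{\widetilde U}$ properly supported, the identity $\Box^{(q)}_bA_{\widetilde U}=I+F_{\widetilde U}$ continues to hold with $F_{\widetilde U}$ smoothing \emph{and} properly supported, which is what is needed later when these operators are summed over $G_U$ and patched by a partition of unity.
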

	
	Until further notice, we assume that the Levi form is non-degenerate of constant signature $(n_-,n_+)$ on $X$. 
	Let $p_0(x, \xi)\in C^\infty(T^*X)$ be the principal symbol of $\Box^{(q)}_b$. Note that $p_0(x,\xi)$ is a $G_U$-invariant polynomial of degree $2$ in $\xi$ when written locally on $\widetilde{U}$. Recall that the characteristic manifold of $\Box^{(q)}_b$ is given by $\Sigma=\Sigma^+\cup\Sigma^-$, where $\Sigma^+$ and $\Sigma^-$ are given by 
	\begin{equation}\label{e-gue140205I}
		\Sigma^+=\set{(x, \lambda\,\omega_0(x))\in T^*X;\, \lambda>0},\:\:
		\Sigma^-=\set{(x, \lambda\,\omega_0(x))\in T^*X;\, \lambda<0},
	\end{equation}
	where $\omega_0\in\mathcal{C}^\infty(X,T^*X)$ is the uniquely determined global contact $1$-form.
	
	\begin{thm} \label{t-gue140205I}
		With the same assumptions as in the previous theorem, let $q=n_-$ or $n_+$. Then, by adopting notation as in Section \ref{sec: opor}, there exist properly supported continuous operators 
		$A\in  L^{-1}_{\frac{1}{2},\frac{1}{2}}(\widetilde{U},T^{*0,q}X\boxtimes(T^{*0,q}X)^*)$, and operators
		$S_-, S_+\in L^{0}_{\frac{1}{2},\frac{1}{2}}(\tilde{U},T^{*0,q}X\boxtimes(T^{*0,q}X)^*)$, such that
		\begin{equation}\label{e-gue140205II}
			\begin{split}
				&\mbox{$\Box^{(q)}_bA+S_-+S_+=I$ on $\widetilde{U}$},\\
				&\Box^{(q)}_bS_-\equiv0\ \ \mbox{on $\widetilde{U}$},\ \ \Box^{(q)}_bS_+\equiv0\ \ \mbox{on $\tilde{U}$},\\
				&A\equiv A^*\ \ \mbox{on $\widetilde U$},\ \ S_-A\equiv0\ \ \mbox{on $\widetilde U$}, \ \ S_+A\equiv0\ \ \mbox{on $\widetilde{U}$},\\
				&S_-\equiv S_-^*\equiv S_-^2\ \ \mbox{on $\widetilde{U}$},\\
				&S_+\equiv S_+^*\equiv S_+^2\ \ \mbox{on $\widetilde{U}$},\\
				&S_-S_+\equiv S_+S_-\equiv0\ \ \mbox{on $\widetilde{U}$},
			\end{split}
		\end{equation}
		where $A^*$, $S_-^*$ and $S_+^*$ are the formal adjoints of $A$, $S_-$ and $S_+$
		with respect to $(\,\cdot\,|\,\cdot\,)$ respectively and $S_-(x,y)$ satisfies
		\[S_-(x, y)\equiv\int^{\infty}_{0}e^{i\varphi_-(x, y)t}s_-(x, y, t)dt\ \ \mbox{on $\widetilde{U}$}\]
		with a symbol $s_-(x, y, t)\in S^{n}_{{\rm cl\,}}\big(\widetilde{U}\times \widetilde{U}\times\mathbb{R}_+,T^{*0,q}X\boxtimes(T^{*0,q}X)^*\big)$  such that
		\begin{equation}  \label{e-gue140205III}\begin{split}
				&s_-(x, y, t)\sim\sum^\infty_{j=0}s^j_-(x, y)t^{n-j}\quad\text{ in }S^{n}_{1, 0}
				\big(\widetilde{U}\times \widetilde{U}\times\mathbb{R}_+\,,T^{*0,q}X\boxtimes (T^{*0,q}X)^*\big)\,,\\
				&s^j_-(x, y)\in C^\infty\big(\widetilde{U}\times \widetilde{U},T^{*0,q}X\boxtimes (T^{*0,q}X)^*\big),\ \ j\in\N_0,
		\end{split}\end{equation}
		and phase function $\varphi_-$ such that $\varphi=\varphi_-$ satisfies 
		\begin{equation}\label{e-gue140205IV}
			\begin{split}
				&\varphi\in\mathcal{C}^\infty(\widetilde{U}\times \widetilde{U}),\ \ {\rm Im\,}\varphi(x, y)\geq0,\\
				&\varphi(x, x)=0,\ \ \varphi(x, y)\neq0\ \ \mbox{if}\ \ x\neq y,\\
				&d_x\varphi(x, y)\big|_{x=y}=-\omega_0(x), \ \ d_y\varphi(x, y)\big|_{x=y}=\omega_0(x), \\
				&\varphi(x, y)=-\ol\varphi(y, x).
			\end{split}
		\end{equation}
		Moreover, there is a function $f\in\mathcal{C}^\infty(\widetilde{U}\times \widetilde{U})$ such that
		\begin{equation} \label{e-gue140205V}
			p_0(x, \varphi'_x(x,y))-f(x,y)\varphi(x,y)
		\end{equation}
		vanishes to infinite order at $x=y$.
		Similarly,
		\[S_+(x, y)\equiv\int^{\infty}_{0}\!\! e^{i\varphi_+(x, y)t}s_+(x, y, t)dt\ \ \mbox{on $\widetilde{U}$}\]
		with $s_+(x, y, t)\in S^{n}_{{\rm cl\,}}\big(\widetilde{U}\times \widetilde{U}\times\mathbb{R}_+,T^{*0,q}X\boxtimes(T^{*0,q}X)^*\big)$ satisfying 
		\begin{equation}\label{e-gue140309}
			s_+(x, y, t)\sim\sum^\infty_{j=0}s^j_+(x, y)t^{n-j}
		\end{equation}
		in $S^{n}_{1, 0}\big(\widetilde{U}\times \widetilde{U}\times\mathbb{R}_+,T^{*0,q}X\boxtimes (T^{*0,q}X)^*\big)$,
		\[s^j_+(x, y)\in\mathcal{C}^\infty\big(\widetilde{U}\times \widetilde{U},T^{*0,q}X\boxtimes (T^{*0,q}X)^*\big),\ \ j\in\N_0,\]
		and $-\ol\varphi_+(x, y)$ satisfies \eqref{e-gue140205IV} and \eqref{e-gue140205V}. 
		Moreover, if $q\neq n_+$, then $s_+(x,y,t)$ vanishes to infinite order at $x=y$. 
		If $q\neq n_-$, then $s_-(x,y,t)$ vanishes to infinite order at $x=y$.
	\end{thm}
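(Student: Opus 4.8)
The plan is to follow the Boutet de Monvel--Sj\"ostrand scheme as refined in~\cite{hsiao}, treating the statement as a purely local construction on the chart $\widetilde U\subset\mathbb R^{2n+1}$, where it is identical to the smooth CR case. Since $\Box^{(q)}_b$ has scalar principal symbol $p_0$ vanishing exactly on $\Sigma=\Sigma^+\cup\Sigma^-$ and is microlocally elliptic on $T^*\widetilde U\setminus\Sigma$, I would first use the standard elliptic calculus to produce a properly supported parametrix off $\Sigma$. This supplies the part of $A$ microsupported away from the characteristic set and reduces everything to a microlocal analysis near the two rays $\Sigma^\pm=\{(x,\lambda\,\omega_0(x))\,:\,\lambda\gtrless0\}$, where the Hermitian structure of the Levi form governs the bottom of the spectrum.

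First I would construct the phase functions. Working near $\Sigma^-$, I would solve the complex eikonal equation in the Melin--Sj\"ostrand form, namely produce $\varphi_-$ so that $p_0(x,\varphi'_x(x,y))$ is divisible by $\varphi_-$ to infinite order on the diagonal as in \eqref{e-gue140205V}, using almost analytic extensions of $p_0$. The positivity $\mathrm{Im}\,\varphi_-\geq0$ together with the normalizations $\varphi_-(x,x)=0$ and $d_x\varphi_-|_{x=y}=-\omega_0(x)$ recorded in \eqref{e-gue140205IV} is forced by the nondegeneracy of the Levi form and the fact that $\Sigma^-$ is the ray in the $-\omega_0$ direction; the constant signature $(n_-,n_+)$ is exactly what distinguishes the two rays and selects the correct sign of the imaginary part. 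Setting $\varphi_+:=-\overline{\varphi_-}$ then yields the phase attached to $\Sigma^+$ and automatically gives the relation $-\overline{\varphi_+}=\varphi_-$.

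Second, with the phases fixed I would determine the amplitudes by inserting the ansatz $S_-\equiv\int_0^\infty e^{it\varphi_-}s_-\,dt$ into $\Box^{(q)}_bS_-\equiv0$ and collecting powers of $t$. This produces a hierarchy of transport equations for the homogeneous components $s^j_-$ of \eqref{e-gue140205III}, whose leading equation fixes $s^0_-$ along the diagonal once one imposes that $S_-$ behave as a projection. The hypothesis $q=n_-$ is precisely the condition under which this transport system admits a solution with nonvanishing leading symbol; when $q\neq n_-$ the same computation forces $s_-$ to vanish to infinite order at $x=y$, and the symmetric analysis with $\varphi_+$ and \eqref{e-gue140309} handles $S_+$ and the case $q=n_+$. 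The explicit diagonal value of the leading symbol then comes from solving the leading transport equation in the osculating model, where the Levi form is constant and the computation is Gaussian.

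Finally I would assemble $A$ by requiring $\Box^{(q)}_bA+S_-+S_+=I$ and solving for $A\in L^{-1}_{\frac{1}{2},\frac{1}{2}}$, and then verify the algebraic relations in \eqref{e-gue140205II}. Formal self-adjointness $S_-\equiv S_-^*$ is built into the symmetry $\varphi_-(x,y)=-\overline{\varphi_-(y,x)}$ of the phase, while the idempotency $S_-^2\equiv S_-$ and the orthogonality relations $S_-S_+\equiv S_+S_-\equiv0$ and $S_\pm A\equiv0$ are checked by composing the corresponding complex Fourier integral operators and applying stationary phase in the fibre and in the parameter $t$. I expect this last step to be the main technical obstacle: the stationary-phase analysis of the compositions, together with the bookkeeping in the symbol spaces $S^m_{1,0}$ carrying the extra variable $t\in\mathbb R_+$, is where the real work lies. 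Once the leading symbol of $S_-^2$ is matched to that of $S_-$ by choosing $s^0_-$ correctly, the remaining discrepancies at all orders can be absorbed into the transport hierarchy, closing the construction modulo smoothing operators.
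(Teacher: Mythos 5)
Your proposal is consistent with how the paper handles this statement: the paper gives no proof at all, observing only that the construction is purely local and therefore carries over verbatim from the smooth CR case to the chart $\widetilde U$, and citing Chapters 6--8 of Part I of~\cite{hsiao} for the result. Your reconstruction of that cited proof is accurate in outline --- elliptic parametrix off $\Sigma$, Melin--Sj\"ostrand eikonal equation with $\mathrm{Im}\,\varphi_-\geq 0$ forced by the signature, transport equations selecting $q=n_\mp$ as the degrees with nonvanishing leading symbol, and stationary phase for the algebraic relations. The one methodological point worth flagging is that~\cite{hsiao} does not obtain $S_\pm$ by directly imposing the ansatz $\Box^{(q)}_bS_-\equiv 0$ together with a projection normalization; rather, the memoir constructs an approximate heat semigroup for $\pr_t+\Box^{(q)}_b$ microlocally near $\Sigma^\mp$ and recovers $S_\pm$ and $A$ simultaneously from its large-time behaviour, which is what makes the relations in \eqref{e-gue140205II} (in particular $S_\pm A\equiv 0$ and the idempotency) come out systematically rather than having to be matched order by order as in your last step. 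Both routes live inside the same Boutet de Monvel--Sj\"ostrand/Melin--Sj\"ostrand framework, so this is a difference of bookkeeping rather than of substance.
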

	
	The operators $S_{+}$, $S_{-}$ are called \emph{approximate Szeg\H{o} kernels}.
	
	\begin{rem}\label{r-gue140211}
		With the notations and assumptions used in Theorem~\ref{t-gue140205I}, furthermore suppose that $q=n_-\neq n_+$. Since
		$s_+(x,y,t)$ vanishes to infinite order at $x=y$, we have $S_+\equiv0$ on $\widetilde{U}$. 
		Similarly, if $q=n_+\neq n_-$, then $S_-\equiv0$ on $\widetilde{U}$.
	\end{rem}
	
	We pause and introduce some notations. For a given point $p\in\widetilde{U}$, let $\{W_j\}_{j=1}^{n}$ be an
	orthonormal frame of $(T^{1,0}\widetilde{U},\langle\,\cdot\,|\,\cdot\,\rangle)$ near $p$, for which the Levi form
	is diagonal at $p$. Put
	\[
	\mathcal{L}_{p}(W_j,\ol W_\ell)=\mu_j(p)\delta_{j,\ell}\,,\;\; j,\ell=1,\ldots,n\,.
	\]
	We will denote by
	\begin{equation}\label{det140530}
		\det\mathcal{L}_{p}=\prod_{j=1}^{n}\mu_j(p)\,.
	\end{equation}
	Let $\{e_j\}_{j=1}^{n}$ denote the basis of $T^{*0,1}\widetilde{U}$, dual to $\{\ol W_j\}^{n}_{j=1}$. We assume that
	$\mu_j(p)<0$ if $1\leq j\leq n_-$ and $\mu_j(p)>0$ if $n_-+1\leq j\leq n$. Put
	\[
	\renewcommand{\arraystretch}{1.2}
	\begin{array}{ll}
		&\mathcal{N}(p,n_-):=\set{ce_1(p)\wedge\ldots\wedge e_{n_-}(p);\, c\in\Complex},\\
		&\mathcal{N}(p,n_+):=\set{ce_{n_-+1}(p)\wedge\ldots\wedge e_{n}(p);\, c\in\Complex}
	\end{array}
	\]
	and let
	\begin{equation}\label{tau140530}
		\tau_{p,n_-}:T^{*0,q}_{p}\widetilde{U}\To\mathcal{N}(p,n_-)\,,\quad
		\tau_{p,n_+}:T^{*0,n-q}_{p}\widetilde{U}\To\mathcal{N}(p,n_+)\,,
	\end{equation} is
	be the orthogonal projections onto $\mathcal{N}(p,n_-)$ and $\mathcal{N}(p,n_+)$
	with respect to $\langle\,\cdot\,|\,\cdot\,\rangle$, respectively. For $J=(j_1,\ldots,j_q)$, $1\leq j_1<\cdots<j_q\leq n$, let 
	$e_J:=e_{j_1}\wedge\cdots\wedge e_{j_q}$. For $\abs{I}=\abs{J}=q$, $I$, $J$ are strictly increasing, let $e_I\otimes(e_J)^*$ be the linear transformation from 
	$T^{*0,q}X$ to $T^{*0,q}X$ given by 
	\[(e_I\otimes(e_J)^*)(e_K)=\delta_{J,K}e_I,\]
	for every $\abs{K}=q$, $K$ is strictly increasing, where $\delta_{J,K}=1$ if $J=K$, $\delta_{J,K}=0$ if $J\neq K$. For any $f\in T^{*0,q}X\boxtimes(T^{*0,q}X)^*$, we have \[f=\sideset{}{'}\sum_{\abs{I}=\abs{J}=q}c_{I,J}e_I\otimes(e_J)^*,\]
	$c_{I,J}\in\mathbb C$, for all $\abs{I}=\abs{J}=q$, $I$, $J$ are strictly increasing, where $\sum'$ means that the summation is performed only over strictly increasing multi-indices.  We call $c_{I,J}e_I\otimes(e_J)^*$ the component of $f$ in the direction $e_I\otimes(e_J)^*$. Let $I_0=(1,2,\ldots,q)$. We can check that 
	\[\tau_{p,n_-}=e_{I_0}(p)\otimes(e_{I_0}(p))^*.\]

	The following formula for the leading term $s^0_-$ on the diagonal follows from \cite[\S 9]{hsiao}. The formula for the leading term $s^0_+$ on the diagonal follows similarly.
	
	\begin{thm} \label{t-gue140205III}
		Let $q=n_-$. 
		For the leading term $s^0_{-}(x,y)$ of the expansion \eqref{e-gue140205III} of $s_{-}(x,y,t)$, we have
		\[
		s^0_{-}(x_0, x_0)=\frac{1}{2}\pi^{-n-1}\abs{\det\mathcal{L}_{x_0}}\tau_{x_0,n_{-}}\,,\:\:x_0\in \widetilde U.
		\] 
		
		Let $q=n_+$. For the leading term $s^0_{+}(x,y)$ of the expansion \eqref{e-gue140309} of $s_{+}(x,y,t)$, we have
		\[
		s^0_{+}(x_0, x_0)=\frac{1}{2}\pi^{-n-1}\abs{\det\mathcal{L}_{x_0}}\tau_{x_0,n_{-}}\,,\:\:x_0\in \widetilde U.
		\] 
	\end{thm}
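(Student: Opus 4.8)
The plan is to reduce everything to a pointwise computation on the chart $\widetilde U\subset\mathbb R^{2n+1}$, since the statement concerns only the value of the leading symbol on the diagonal. First I would fix $x_0\in\widetilde U$ and pass to the coordinates attached to an orthonormal frame $\{W_j\}_{j=1}^n$ of $T^{1,0}\widetilde U$ near $x_0$ diagonalizing the Levi form, $\mathcal{L}_{x_0}(W_j,\ol W_\ell)=\mu_j(x_0)\delta_{j,\ell}$, with dual coframe $\{e_j\}$ and $I_0=(1,\ldots,n_-)$ as in the paragraph preceding \eqref{tau140530}, so that $\tau_{x_0,n_-}=e_{I_0}(x_0)\otimes(e_{I_0}(x_0))^*$. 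By Theorem~\ref{t-gue140205I} the approximate Szeg\H{o} kernel $S_-$ is a complex Fourier integral operator with phase $\varphi_-$ obeying \eqref{e-gue140205IV}--\eqref{e-gue140205V} and classical symbol $s_-\sim\sum_{j\geq0}s^j_-t^{n-j}$ as in \eqref{e-gue140205III}, and the task is to identify the endomorphism $s^0_-(x_0,x_0)$ of $T^{*0,q}_{x_0}\widetilde U$.

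The first step is to fix the \emph{direction} of $s^0_-(x_0,x_0)$. Inserting the oscillatory ansatz into the relation $\Box^{(q)}_bS_-\equiv0$ from \eqref{e-gue140205II} and expanding $e^{-it\varphi_-}\Box^{(q)}_b(e^{it\varphi_-}s_-)$ in decreasing powers of $t$ gives a hierarchy of transport equations. The top-order coefficient is $p_0(x,\varphi_x')\,s^0_-$; but by \eqref{e-gue140205V} and the fact that $d_x\varphi_-|_{x=y}=-\omega_0$ lies on the characteristic set, the principal symbol $p_0$ vanishes identically on the whole fibre over $\Sigma^-$, so this equation is vacuous. The direction is therefore fixed by the first genuine (subprincipal) transport equation, which reduces to the ground-state equation for the transverse model operator associated with $\Box^{(q)}_b$ along $\Sigma^-$, a harmonic oscillator acting on $(0,q)$-forms. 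By the constant-signature hypothesis this ground state is nontrivial exactly when $q=n_-$, where it spans $\mathcal{N}(x_0,n_-)$; hence $s^0_-(x_0,x_0)=\lambda(x_0)\,\tau_{x_0,n_-}$ for a scalar $\lambda(x_0)$ still to be determined. This is the same mechanism that forces $S_\pm\equiv0$ for $q\neq n_\pm$ in Theorem~\ref{thm:BoxAequalsI} and Remark~\ref{r-gue140211}.

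The second step is to pin down $\lambda(x_0)$ using the microlocal idempotency $S_-\equiv S_-^2$ recorded in \eqref{e-gue140205II}. Writing
\[
(S_-\circ S_-)(x,y)\equiv\int_0^\infty\!\!\int_0^\infty\!\!\int e^{i(\varphi_-(x,z)t+\varphi_-(z,y)s)}\,s_-(x,z,t)\,s_-(z,y,s)\,\mathrm{d}z\,\mathrm{d}t\,\mathrm{d}s
\]
and evaluating it by the method of stationary phase for complex phases, the nondegenerate critical point in $z$ together with the critical point in the $t,s$ variables produces a factor built from the transverse Hessian of $\varphi_-$ at the diagonal. In the diagonalizing frame this determinant is a universal constant times $\prod_{j=1}^n\abs{\mu_j(x_0)}=\abs{\det\mathcal{L}_{x_0}}$ (see \eqref{det140530}), while collecting the $t$-integrations of $e^{i\varphi_-t}t^{n}$ supplies the remaining powers of $\pi$ and the factor $\tfrac12$. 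Matching the leading symbols of $S_-^2$ and $S_-$ on the diagonal in the $\tau_{x_0,n_-}$-direction then yields an algebraic relation of the form $\lambda(x_0)=C(x_0)\,\lambda(x_0)^2$, so $\lambda(x_0)\in\{0,\,C(x_0)^{-1}\}$; nontriviality of $S_-$ for $q=n_-$ selects $\lambda(x_0)=C(x_0)^{-1}=\frac12\pi^{-n-1}\abs{\det\mathcal{L}_{x_0}}$. Since the leading symbol depends only on the $2$-jet of the CR structure at $x_0$, this agrees with the value obtained from the explicit Heisenberg model, which is exactly the content of \cite[\S 9]{hsiao}.

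Finally, the formula for $s^0_+$ follows by symmetry: by \eqref{e-gue140205IV} the phase $-\ol\varphi_+$ satisfies the same conditions as $\varphi_-$, so replacing $\Sigma^-$ by $\Sigma^+$, equivalently interchanging the negative and positive Levi eigenvalues, the identical argument with $n_-$ replaced by $n_+$ gives $s^0_+(x_0,x_0)=\frac12\pi^{-n-1}\abs{\det\mathcal{L}_{x_0}}\,\tau_{x_0,n_+}$ (the ground state now being $e_{n_-+1}\wedge\cdots\wedge e_n$, spanning $\mathcal{N}(x_0,n_+)$). The hard part of the whole scheme is the stationary-phase composition above: keeping track of the complex transverse Hessian and of the normalization of the $t$-integrals to extract the exact constant $\frac12\pi^{-n-1}$ is the only non-structural ingredient, and it is precisely what requires the model computation of \cite[\S 9]{hsiao}. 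Everything else, namely the reduction to the diagonal, the identification of the $\tau$-direction from the subprincipal transport equation, and the role of the signature hypothesis, is formal.
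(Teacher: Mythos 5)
The paper does not actually prove this statement: it is quoted directly from \cite[\S 9]{hsiao}, where the leading term of the local approximate Szeg\H{o} kernel is computed on an open set of a (smooth) CR manifold with non-degenerate Levi form, and the orbifold setting adds nothing here since everything takes place on the smooth chart $\widetilde U$. Your proposal is therefore not being measured against an argument in this paper but against the one in the cited memoir, and as an outline it is a fair reconstruction of that computation: the direction $\tau_{x_0,n_-}$ is indeed forced by the transport hierarchy coming from $\Box^{(q)}_bS_-\equiv0$ (the ground state of the transverse model operator being one-dimensional exactly in degree $n_-$), and the scalar is pinned down by the reproducing relations $S_-\equiv S_-^*\equiv S_-^2$ of \eqref{e-gue140205II} via stationary phase, with the transverse Hessian of $\varphi_-$ producing $\abs{\det\mathcal{L}_{x_0}}$. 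You are also candid that the exact constant $\tfrac12\pi^{-n-1}$ ultimately rests on the model computation in \cite[\S 9]{hsiao}, which is precisely the step the paper itself delegates to that reference; so your write-up and the paper's one-line proof have the same logical content, yours merely being more explicit about the mechanism.

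One point in your first step is stated incorrectly, though it does not derail the argument. You say the top-order equation is vacuous because ``$p_0$ vanishes identically on the whole fibre over $\Sigma^-$''. The principal symbol $p_0$ of $\Box^{(q)}_b$ vanishes only on the characteristic set $\Sigma$, not on whole cotangent fibres; what is true, by \eqref{e-gue140205V} together with $\varphi_-(x,x)=0$ and $d_x\varphi_-|_{x=y}=-\omega_0(x)\in\Sigma^-$, is that $p_0(x,\varphi'_{-,x}(x,y))=f(x,y)\varphi_-(x,y)$ up to a term vanishing to infinite order at the diagonal. This is the eikonal equation that determines $\varphi_-$; it is not vacuous, but since $\varphi_-$ vanishes on the diagonal it contributes no constraint on $s^0_-(x_0,x_0)$ there, which is the correct reason the determination of the leading coefficient is pushed to the subprincipal (transport) level. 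Finally, note that the statement as printed in the paper has $\tau_{x_0,n_-}$ in the $q=n_+$ case as well; comparing with Theorem~\ref{thm:main}, this is a typo for $\tau_{x_0,n_+}$, and your version with $\tau_{x_0,n_+}$ is the intended one.
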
 
	
	\subsection{Geometric conditions for the closed range property}
	
	We recall that, given $q\in\{0,\ldots,n\}$, the Levi form is said to satisfy condition $Y(q)$ at $p\in X$, if $\mathcal{L}_{p}$ has at least either $\min{(q+1,n-q)}$ pairs of eigenvalues with opposite signs or $\max{(q+1, n-q)}$ eigenvalues of the same sign.  
	
	\begin{rem}
		Assume that the Levi form is non degenerate of constant signature $(n_-,n_+)$. Given $q\in \{0,\,\dots,\, n\}$, then $Y(q)$ holds if $q\notin \{n_-,\,n_+\}$. Furthermore, notice that if $q=n_-=n_+$, $q-1\notin \{n_-,n_+\}$ and $q+1\notin \{n_-,n_+\}$, then $Y(q-1)$ and $Y(q+1)$ hold. Eventually, we remark that if $q=n_-$ and $\lvert n_--n_+\rvert >1$, then then $Y(q-1)$ and $Y(q+1)$ hold. In particular if $X$ is a strongly pseudoconvex CR orbifold of dimension greater than $5$, then $Y(1)$ holds.
	\end{rem} 
	
	We can repeat the same proof as in manifold case with small modification and get the following:
	
	\begin{thm} \label{thm:closedrange}
		Let $X$ be a compact CR orbifold and suppose that $Y(q)$ holds. Then, $\Box^{(q)}_b$ has closed range. 
		
		Furthermore, suppose that $Y(q)$ fails but $Y(q-1)$ and $Y(q+1)$ hold, then $\Box^{(q)}_b$ has closed range.
	\end{thm}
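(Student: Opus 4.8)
The plan is to localize to orbifold charts, where condition $Y(q)$ reduces the problem to the classical subelliptic estimate for the Kohn Laplacian on a smooth CR manifold, and then to reassemble the local estimates into a global one by means of an orbifold partition of unity. For the first assertion I would fix an orbifold chart $(\widetilde U, G_U)\To U$. Since $Y(q)$ holds at every point of $X$, the lifted $G_U$-invariant Levi form satisfies $Y(q)$ at every point of $\widetilde U$, and the classical theory of the Kohn Laplacian on smooth CR manifolds furnishes a subelliptic estimate of order $\tfrac12$: there is $C_U>0$ with
\[
\norm{u}^2_{1/2}\le C_U\Bigl(\norm{\ddbar_b u}^2+\norm{\ol{\pr}^{*}_b u}^2+\norm{u}^2\Bigr)
\]
for every $u\in\Omega^{0,q}_c(\widetilde U)$. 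In particular the estimate holds for $G_U$-invariant compactly supported forms, which are exactly the lifts of elements of $\Omega^{0,q}_c(U)$; since for invariant forms the orbifold $L^2$ and Sobolev norms equal $\tfrac{1}{\abs{G_U}}$ times the corresponding norms on $\widetilde U$, both sides rescale by the same factor and the estimate descends unchanged to forms on $U$.

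I would then glue these local estimates. I cover $X$ by finitely many charts $(\widetilde U_i, G_{U_i})\To U_i$ and choose a subordinate partition of unity $(\rho_i)$ as in \eqref{eq:integral}. Writing $u=\sum_i\rho_i u$, applying the local estimate to each invariant lift of $\rho_i u$, and absorbing the order-zero commutators $[\ddbar_b,\rho_i]$ and $[\ol{\pr}^{*}_b,\rho_i]$, one obtains a global estimate
\[
\norm{u}^2_{1/2}\le C\Bigl(\norm{\ddbar_b u}^2+\norm{\ol{\pr}^{*}_b u}^2+\norm{u}^2\Bigr)
\]
first for $u\in\Omega^{0,q}(X)$ and then, by density, for all $u\in{\rm Dom\,}\ddbar_b\cap{\rm Dom\,}\ol{\pr}^{*}_b$. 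Since $X$ is compact, the inclusion $H^{1/2}(X)\hookrightarrow L^2_{(0,q)}(X)$ is compact by Rellich, so $\Ker\Box^{(q)}_b=\Ker\ddbar_b\cap\Ker\ol{\pr}^{*}_b$ is finite dimensional; a routine contradiction argument (a normalized sequence on $(\Ker\Box^{(q)}_b)^\perp$ with $\norm{\ddbar_b u}^2+\norm{\ol{\pr}^{*}_b u}^2\to0$ would, by the estimate and Rellich, converge in $L^2$ to a nonzero harmonic form orthogonal to $\Ker\Box^{(q)}_b$) then yields the uniform bound $\norm{u}^2\le C'\bigl(\norm{\ddbar_b u}^2+\norm{\ol{\pr}^{*}_b u}^2\bigr)$ on $(\Ker\Box^{(q)}_b)^\perp$, which is equivalent to $\Box^{(q)}_b$ having closed range.

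For the second assertion I would argue by pure functional analysis. By the first part, $Y(q-1)$ and $Y(q+1)$ force $\Box^{(q-1)}_b$ and $\Box^{(q+1)}_b$ to have closed range. Using the standard equivalence that $\Box^{(k)}_b$ has closed range if and only if both $\ddbar_b\colon L^2_{(0,k-1)}(X)\To L^2_{(0,k)}(X)$ and $\ddbar_b\colon L^2_{(0,k)}(X)\To L^2_{(0,k+1)}(X)$ have closed range, closed range of $\Box^{(q+1)}_b$ gives closed range of $\ddbar_b$ out of $(0,q)$-forms, and closed range of $\Box^{(q-1)}_b$ gives closed range of $\ddbar_b$ into $(0,q)$-forms; these two facts are precisely what is needed to conclude that $\Box^{(q)}_b$ has closed range.

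The main obstacle is the descent-and-patching step: one must check that the construction underlying the smooth subelliptic estimate on each chart $\widetilde U$ is compatible with the finite group $G_U$ and with the transition morphisms $\Phi_{V,U}$, so that passing to $G_U$-invariant forms and summing over a partition of unity really produces a single global estimate on $X$ rather than merely a family of chartwise estimates. Once invariance is used to identify orbifold norms with rescaled invariant norms on charts and the finiteness of each $G_U$ is exploited, the argument is exactly the manifold one, which is the sense in which ``the same proof with small modification'' applies.
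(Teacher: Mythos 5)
Your proposal is correct, but for the first assertion it takes a genuinely different route from the proof the paper actually writes out. The paper establishes the basic estimate $\norm{\Box^{(q)}_bu}\geq C\norm{u}$ on $(\Ker\Box^{(q)}_b)^{\perp}$ by building a global parametrix: on each chart it takes the properly supported $A_{\widetilde U}\in L^{-1}_{\frac12,\frac12}$ of Theorem~\ref{thm:BoxAequalsI} with $\Box^{(q)}_bA_{\widetilde U}=I+F_{\widetilde U}$, averages over $G_U$, and patches with a partition of unity to obtain $A:H^s\To H^{s+1}$ with $\Box^{(q)}_bA=I+F$, $F$ smoothing (Lemma~\ref{lem:A}); the contradiction-plus-Rellich endgame then runs off the bound $\norm{u_j}_1\leq C'$ obtained by applying $\hat A^*$. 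You instead patch the classical Kohn subelliptic $\frac12$-estimate, controlling $\norm{u}_{1/2}$ by the graph norm of $(\ddbar_b,\ol{\pr}^*_b)$, and run the same Rellich/contradiction step at the level of the quadratic form. The two endgames are identical; the difference is the local input. Your route works under $Y(q)$ alone and so covers the theorem in the stated generality, whereas the paper's written proof is explicitly restricted to a Levi form that is non-degenerate of constant signature (where $Y(q)$ just means $q\notin\set{n_-,n_+}$ and the microlocal parametrix of \cite{hsiao} is available), the general case being deferred to ``the same proof as in the manifold case'' --- which is essentially what you supply. The paper's route, in turn, recycles machinery it needs anyway for Theorem~\ref{thm:main}. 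For the second assertion the paper gives no argument at all; your reduction via the standard equivalence (closed range of $\Box^{(k)}_b$ iff $\ddbar_b$ has closed range both into and out of level $k$) is the intended standard argument and is correct. The only points to make explicit are ones you already flag or that are routine: the local $\frac12$-estimate is guaranteed only on sufficiently small neighborhoods, so the charts may need shrinking, and the extension of the global estimate from $\Omega^{0,q}(X)$ to ${\rm Dom\,}\ddbar_b\cap{\rm Dom\,}\ol{\pr}^*_b$ requires a Friedrichs-type density lemma on the orbifold.
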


	In the following, we will give a proof of the first pat of Theorem~\ref{thm:closedrange} under the assumption that 
	the Levi form is non-degenerate of constant signature $(n_-,n_+)$. Since the proof is related to the construction of  parametrices for Kohn Lapalcians on orbifolds, therefore we think it is worth to give a proof under the assumption that the Levi form is non-degenerate of constant signature $(n_-,n_+)$.
	We suppose that $Y(q)$ holds. Then, $q\notin\set{n_-,n_+}$. We shall prove:
	\begin{equation} \label{eq:closedrange}
		\exists \,C>0 \quad\text{such that}\quad \lVert\square^{(q)}_b u \rVert\geq C\, \lVert u\rVert \quad \forall u\perp \Ker \square^{(q)}_b\,;
	\end{equation}
	which is equivalent to prove that $\square^{(q)}_b$ has closed range, see Lemma C.1.1 in \cite{mm}. We need the following lemma.
	
	\begin{lem} \label{lem:A}
		There exists a continuous operator 
		\[A: H^s(X,T^{*0,q}X)\To H^{s+1}(X,T^{*0,q}X),\ \ \forall s\in\mathbb R,\]
		such that $\square^{(q)}_bA=I+F$ where $F$ is smoothing on $X$. 
	\end{lem}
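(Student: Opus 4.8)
The plan is to build $A$ by patching the local parametrices furnished by Theorem~\ref{thm:BoxAequalsI} with a partition of unity, following the manifold argument, the only new feature being the passage through orbifold charts via the group averaging of Section~\ref{sec: opor}. Since we are assuming $Y(q)$, we have $q\notin\set{n_-,n_+}$, so Theorem~\ref{thm:BoxAequalsI} is available on every chart. By compactness of $X$ I would first fix a finite cover $\set{U_j}^N_{j=1}$ of $X$ together with orbifold charts $(\widetilde U_j,G_{U_j})\To U_j$.

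On each chart, take the properly supported $A_{\widetilde U_j}\in L^{-1}_{\frac12,\frac12}(\widetilde U_j,T^{*0,q}X\boxtimes(T^{*0,q}X)^*)$ with $\square^{(q)}_bA_{\widetilde U_j}=I+F_{\widetilde U_j}$, $F_{\widetilde U_j}$ smoothing. Because $\square^{(q)}_b$ is defined chartwise and commutes with the $G_{U_j}$-action, I would replace $A_{\widetilde U_j}$ by its average over $G_{U_j}$ and so assume it is $G_{U_j}$-invariant, the identity $\square^{(q)}_bA_{\widetilde U_j}=I+F_{\widetilde U_j}$ being preserved under averaging. Descending as in Section~\ref{sec: opor}, the kernel
\[A_{U_j}(x,y)\equiv\frac{1}{\abs{G_{U_j}}}\sum_{g,h\in G_{U_j}}A_{\widetilde U_j}(h\cdot\widetilde x,g\cdot\widetilde y)\]
defines $A_{U_j}\in L^{-1}_{\frac12,\frac12}(U_j,T^{*0,q}X\boxtimes(T^{*0,q}X)^*)$ with $\square^{(q)}_bA_{U_j}=I+F_{U_j}$ on $U_j$ and $F_{U_j}$ smoothing; being of order $-1$ and type $(\tfrac12,\tfrac12)$ with $\tfrac12<1$, it is continuous from $H^s_{\mathrm{comp}}$ to $H^{s+1}_{\mathrm{loc}}$.

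Next I would glue. Choose a partition of unity $\set{\chi_j}$ subordinate to $\set{U_j}$ and $\psi_j\in\mathcal{C}^\infty_c(U_j)$ with $\psi_j\equiv1$ near $\operatorname{supp}\chi_j$, and set $A:=\sum^N_{j=1}\psi_jA_{U_j}\chi_j$. Each summand is compactly supported in $U_j$, so $A$ is a global operator of order $-1$ on $X$, and the continuity $A:H^s(X,T^{*0,q}X)\To H^{s+1}(X,T^{*0,q}X)$ follows from the mapping properties of the pieces. Using $\square^{(q)}_b\psi_j=\psi_j\square^{(q)}_b+[\square^{(q)}_b,\psi_j]$, the relations $\square^{(q)}_bA_{U_j}=I+F_{U_j}$, $\psi_j\chi_j=\chi_j$ and $\sum_j\chi_j=1$, I obtain
\[\square^{(q)}_bA=I+\sum_j\psi_jF_{U_j}\chi_j+\sum_j[\square^{(q)}_b,\psi_j]A_{U_j}\chi_j.\]
The first sum is smoothing. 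In the second, $[\square^{(q)}_b,\psi_j]$ is a first-order (hence local) operator supported in $\set{d\psi_j\neq0}$, while $A_{U_j}\chi_j$ is pseudolocal with singularities confined to $\operatorname{supp}\chi_j$; since $\set{d\psi_j\neq0}\cap\operatorname{supp}\chi_j=\emptyset$, each such term is smoothing. Hence $\square^{(q)}_bA=I+F$ with $F$ smoothing on $X$, which is the assertion.

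The only genuinely orbifold-specific point, and the step I expect to demand the most care, is verifying that the chart-level identity descends correctly under averaging: namely that the $G_{U_j}$-averaged kernel represents a parametrix on $U_j$ in the precise sense of Section~\ref{sec: opor}, that smoothing on $\widetilde U_j$ corresponds to smoothing on $X$, and that these locally descended operators are compatible with the transition morphisms $\Phi_{V,U}$ so that the patched $A$ is globally well defined. All of this rests on $\square^{(q)}_b$ commuting with $G_{U_j}$ and on the pseudolocality of type $(\tfrac12,\tfrac12)$ operators surviving the descent; once these are in place, the remainder is the standard manifold parametrix patching.
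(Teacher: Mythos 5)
Your proposal is correct and follows essentially the same route as the paper: average the local parametrices of Theorem~\ref{thm:BoxAequalsI} over the chart groups $G_{U_j}$, descend to $U_j$, and glue with a partition of unity subordinate to a finite cover. The only difference is that the paper exploits the proper support of $A_{\widetilde U_j}$ to set $\hat A=\sum_j\hat A_{U_j}\chi_j$ directly, with no left cutoffs $\psi_j$ and hence no commutator terms to dispose of; your variant with $\psi_j$ and pseudolocality of the type $(\tfrac12,\tfrac12)$ operators is equally valid.
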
 
	
	\begin{proof}[Proof of Lemma \ref{lem:A}]
		Fix an orbifold chart $(G_U,\tilde{U})\rightarrow U$.
		By Theorem \ref{thm:BoxAequalsI}, we can define on $\widetilde{U}$,
		\[\hat{A}_U(x,y)=\sum_{g,h\in G_U} \frac{1}{\lvert G_U\rvert}A_{\widetilde U}
		(h\cdot x,g\cdot y)\quad\text{ and }\quad \hat{F}_U(x,y)=\sum_{g,h\in G_U} \frac{1}{\lvert G_U\rvert}F_{\widetilde U}(h\cdot x,g\cdot y),\]
		so that $\hat{A}_U:H^s_{{\rm comp\,}}(\widetilde{U},T^{*0,q}X)\rightarrow 
		H^{s+1}_{{\rm loc\,}}(\widetilde{U},T^{*0,q}X)$ is continuous, for all $s\in\mathbb R$, $\hat{F}_U$ is smoothing and they satisfy the following
		\[ \Box^{(q)}_b\hat{A}_U= I+\hat{F}_U\quad \text{ on }\quad U\,,\]
		where $\hat{A}_U$ and $\hat F_U$ are as in Theorem~\ref{thm:BoxAequalsI}. 
		Now, since $X$ is compact we can consider an open cover $\{U_j\}_{j\in J}$, $(\widetilde U_j,G_{U_j})\To U_j$ orbifold chart, for every $j$, and a partition of unity $\chi_j\in \mathcal{C}_c^{\infty}(U_j)$ so that we have
		\[\Box^{(q)}_b\hat{A}_{U_j}\chi_j=\chi_j+\hat{F}_{U_j}\chi_j\quad \text{ on }\quad U_j\,. \]

		Eventually, set $\hat{A}=\sum_j \hat{A}_j\chi_j$ and $\hat{F}=\sum_j \hat{F}_j\chi_j$, we can check
		\[\Box^{(q)}_b\hat{A}= I+\hat{F}\,,\]
		globally on $X$ and we get the lemma.
	\end{proof}
	
	We are ready to prove \eqref{eq:closedrange}. Suppose by the contrary that \eqref{eq:closedrange} does not hold. Then there exists a sequence of $(0,q)$ forms such that 
	$u_j\in L^2_{(0,q)}(X)$, $j=1,2,\ldots$,  with $\lVert u_j\rVert=1$ and $u_j \perp \Ker \square^{(q)}_b$ such that $\lVert\square^{(q)}_b u_j \rVert< \frac{1}{j}\lVert u_j\rVert $ for each $j=1,2\dots$. By Lemma \ref{lem:A}, 
	we have
	\[\hat{A}^*\square_b^{(q)}u_j=u_j+\hat{F}^*u_j,\ \ j=1,2,\ldots,\]
	where $\hat A^*$ and $\hat F^*$ are adjoints of $\hat A$ and $\hat F$ respectively. 
	Since $\hat{A}^*\,:\,L^2_{(0,q)}(X)\rightarrow H^1(X,T^{*0,1}X)$ is continuous, we get
	\[\lVert u_j\rVert_{1} - \lVert \hat{F}^* u_j \rVert_{1}\leq\lVert \hat{A}^* \square^{(q)}_b u_j \rVert_1\leq C \lVert\square_b^{(q)}u_j\rVert\leq \frac{C}{j}\lVert u_j\rVert \,,\]
	for all $j=1,2,\ldots$, where $\norm{\cdot}_1$ denotes the Sobolev norm of order one 
	and $C>0$ is a constant independent of $u_j$.
	Since $\hat{F}:L^2_{(0,q)}(X)\rightarrow H^1(X,T^{*0,q}X)$ is continuous, we get
	\[ \lVert u_j\rVert_{1} \leq C',\ \ \mbox{for all $j=1,2,\ldots$},\]
	where $C'>0$ is a constant independent of $j$. 
	By Rellich's Lemma, there is a subsequence $1\leq j_1<j_2<\dots$ such that $u_{j_s}\rightarrow u$ in $L^2_{(0,q)}(X)$ as $s\rightarrow +\infty$. Since $u_j\perp{\rm Ker\,}\square^{(q)}_b$ for each $j$, then $u\perp {\rm Ker\,}\square^{(q)}_b$; but $\square^{(q)}_b u=\lim_s \square^{(q)}_b u_{j_s}=0$ and thus we get a contradiction. 
	
	We can repeat the proof of~\cite[Theoredm 6.24]{HM} with minor change and get the following
	
	\begin{thm}\label{t-gue220730yyd}
		Assume that the Levi form is non-degenerate of constant signature $(n_-,n_+)$ on $X$. Suppose that $X$ admits a transversal and CR $\mathbb R$-action. For any $q\in\set{0,1,\ldots,n}$,  \[\Box^{(q)}_b:{\rm Dom\,}\Box^{(q)}_b\subset L^2_{(0,q)}(X)\To L^2_{(0,q)}(X)\] 
		has closed range. 
	\end{thm}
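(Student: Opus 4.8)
The plan is to reduce the statement to the a priori estimate \eqref{eq:closedrange}, namely the existence of $C>0$ with $\norm{\Box^{(q)}_b u}\geq C\norm{u}$ for all $u\perp\Ker\Box^{(q)}_b$, which (as recalled via \cite[Lemma C.1.1]{mm}) is equivalent to closed range. When $q\notin\set{n_-,n_+}$ the condition $Y(q)$ holds and the estimate is already furnished by Theorem~\ref{thm:closedrange}, so the group action plays no role there. The genuinely new case is $q=n_-$ or $q=n_+$, where $Y(q)$ fails, the operator $\Box^{(q)}_b$ is no longer subelliptic, and a purely microlocal parametrix along the characteristic set $\Sigma^{\pm}$ cannot by itself close the estimate. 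Here the transversal CR $\mathbb R$-action is the essential new ingredient, and I would follow the manifold argument of \cite[Theorem 6.24]{HM}, carrying it out $G_U$-equivariantly on orbifold charts.

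More precisely, let $T\in\mathcal{C}^\infty(X,TX)$ be the infinitesimal generator of the action; since the action is transversal I take $T=R$ to be the Reeb field, and since it is CR and preserves $\omega_0$ it commutes with $\ddbar_b$, with its adjoint $\ol{\pr}^{*}_{b}$, and hence with $\Box^{(q)}_b$. First I would pass to an orbifold chart $(\widetilde U,G_U)\To U$; because $G_U$ is finite and acts preserving $T^{1,0}X$, $\omega_0$ and the Hermitian metric, and commutes with the lifted $\mathbb R$-action, the analysis of \cite{HM} applies on $\widetilde U$ in a $G_U$-equivariant fashion. On $\widetilde U$ the microlocal Hodge decomposition of Theorem~\ref{t-gue140205I} isolates the non-coercive part of $\Box^{(q)}_b$ in the approximate Szeg\H o projectors $S_-,S_+$ supported microlocally on $\Sigma^-,\Sigma^+$. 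The transversal action enters by supplying, roughly via the spectral decomposition of the self-adjoint operator $-\imath T$ (equivalently, the Fourier modes of the flow), the control in the Reeb codirection $\omega_0$ that is precisely the direction missed by the subelliptic estimates; this should give a uniform lower bound for the nonzero spectrum of $\Box^{(q)}_b$ on $\widetilde U$ transverse to the ranges of $S_\pm$, which together with the finite-dimensionality of $\Ker\Box^{(q)}_b$ yields the local version of \eqref{eq:closedrange}.

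I would then globalize exactly as in the proof of Lemma~\ref{lem:A}: average the chart operators over $G_U$ to obtain $G_U$-invariant operators, glue them with a partition of unity $\set{\chi_j}$ subordinate to a finite cover $\set{U_j}$ of the compact space $X$, and assemble a global parametrix $\hat A$ with $\Box^{(q)}_b\hat A=I+\hat F$, $\hat F$ smoothing on $X$. Testing against $u\perp\Ker\Box^{(q)}_b$ and combining the resulting Sobolev estimate with the Reeb-direction control and Rellich's lemma, exactly as in the contradiction argument already given for \eqref{eq:closedrange}, produces the desired uniform lower bound, hence closed range of $\Box^{(q)}_b$ for every $q$.

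The step I expect to be the main obstacle is the middle one: showing that the manifold estimates of \cite[Theorem 6.24]{HM} survive the passage to the orbifold and the averaging over $G_U$, that is, that the finite-group equivariance does not spoil the uniform spectral gap. In particular I anticipate the delicate point to be the case $q=n_-=n_+$, where both $S_-$ and $S_+$ contribute on $\Sigma^-\cup\Sigma^+$ and the Reeb-direction argument must control the two pieces simultaneously.
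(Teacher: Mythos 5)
Your proposal takes essentially the same approach as the paper: the paper disposes of this theorem in one line by asserting that the proof of \cite[Theorem 6.24]{HM} can be repeated with minor changes, and your plan (run the [HM] argument $G_U$-equivariantly on orbifold charts, then glue with a partition of unity as in Lemma~\ref{lem:A} and conclude via the a priori estimate \eqref{eq:closedrange}) is exactly that route spelled out. The points you flag as delicate --- the averaging over $G_U$ and the case $q=n_-=n_+$ --- are precisely the ``minor change'' the paper leaves implicit, so there is no substantive divergence.
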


	\section{Proof of Theorem~\ref{thm:main}}\label{s-gue220730yyd}
	
	Let 
	\[\Pi^{(q)}: L^2_{(0,q)}(X)\To{\rm Ker\,}\Box^{(q)}_b\]
	be the orthogonal projection (Szeg\H{o} projection). 
	Let us recall the following known global result.
	\begin{thm} \label{thm:global}
		Suppose that the Kohn Laplacian $\square_b^{(q)}:{\rm Dom\,}\square_b^{(q)}\subset L^2_{(0,q)}(X)\rightarrow L^2_{(0,q)}(X)$ has closed range, then there exits a bounded operator $N:L^2_{(0,q)}(X)\rightarrow{\rm Dom\,}\square_b^{(q)}$ such that
		\[N\square_b^{(q)}+\Pi^{(q)}=I\quad\text{on }{\rm Dom\,}\square_b^{(q)} \quad\text{ and }\quad \square_b^{(q)}N+\Pi^{(q)}=I\quad\text{on }L^2_{(0,q)}(X)  \,. \]
	\end{thm}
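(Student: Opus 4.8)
The plan is to treat this as a purely Hilbert-space statement about a self-adjoint non-negative operator with closed range; the orbifold structure plays no role beyond furnishing the Hilbert space $L^2_{(0,q)}(X)$ and the operator $\square_b^{(q)}$, so the argument is the same as in the smooth case. The starting point is that the Gaffney extension $\square_b^{(q)}$ is self-adjoint and non-negative, whence $\square_b^{(q)}=(\square_b^{(q)})^*$ and the general identity ${\rm Ker\,}\square_b^{(q)}=({\rm Range\,}\square_b^{(q)})^\perp$ holds. Using the closed-range hypothesis I would then record the orthogonal direct sum $L^2_{(0,q)}(X)={\rm Ker\,}\square_b^{(q)}\oplus{\rm Range\,}\square_b^{(q)}$, in which $\Pi^{(q)}$ is the orthogonal projection onto the first summand and $I-\Pi^{(q)}$ is the orthogonal projection onto ${\rm Range\,}\square_b^{(q)}$.

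Next I would isolate the bijective part of $\square_b^{(q)}$. Writing $H_1:={\rm Range\,}\square_b^{(q)}=({\rm Ker\,}\square_b^{(q)})^\perp$, the restriction $\square_b^{(q)}:{\rm Dom\,}\square_b^{(q)}\cap H_1\To H_1$ is a bijection: it is injective because its kernel meets $H_1$ only in $0$, and it is surjective because any $v\in H_1$ equals $\square_b^{(q)}u$ for some $u\in{\rm Dom\,}\square_b^{(q)}$, and subtracting the harmonic part $\Pi^{(q)}u$ (which lies in ${\rm Dom\,}\square_b^{(q)}$) produces a preimage inside $H_1$. This restricted operator is closed, being the restriction of the closed operator $\square_b^{(q)}$ to a closed subspace equipped with the induced domain.

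The main technical point, and really the only one, is then the boundedness of the inverse: by the closed graph (equivalently, bounded inverse) theorem, the inverse $N_1:H_1\To{\rm Dom\,}\square_b^{(q)}\cap H_1$ of a closed bijection between Banach spaces is automatically bounded. This is precisely where the closed-range hypothesis is essential, since without it $H_1$ would fail to be complete and the bounded inverse theorem would not apply. I expect no genuine obstacle beyond invoking this theorem correctly; everything else is routine bookkeeping.

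Finally I would assemble $N$ and verify the two identities. Set $N:=N_1\,(I-\Pi^{(q)})$, a bounded operator $L^2_{(0,q)}(X)\To{\rm Dom\,}\square_b^{(q)}$ that vanishes on ${\rm Ker\,}\square_b^{(q)}$. For $u\in{\rm Dom\,}\square_b^{(q)}$, decomposing $u=\Pi^{(q)}u+(I-\Pi^{(q)})u$ with $(I-\Pi^{(q)})u\in H_1\cap{\rm Dom\,}\square_b^{(q)}$ and using $\square_b^{(q)}u=\square_b^{(q)}(I-\Pi^{(q)})u\in H_1$, one gets $N\square_b^{(q)}u=N_1\square_b^{(q)}(I-\Pi^{(q)})u=(I-\Pi^{(q)})u$, hence $N\square_b^{(q)}+\Pi^{(q)}=I$ on ${\rm Dom\,}\square_b^{(q)}$. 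Dually, for $v\in L^2_{(0,q)}(X)$ one has $Nv\in H_1\cap{\rm Dom\,}\square_b^{(q)}$ and $\square_b^{(q)}Nv=\square_b^{(q)}N_1(I-\Pi^{(q)})v=(I-\Pi^{(q)})v$, so $\square_b^{(q)}N+\Pi^{(q)}=I$ on $L^2_{(0,q)}(X)$, which completes the argument.
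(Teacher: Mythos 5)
Your argument is correct and complete: it is the standard functional-analytic proof that a self-adjoint, non-negative closed operator with closed range admits a bounded partial inverse $N$ complementary to the kernel projection. The paper itself offers no proof of this statement --- it merely ``recalls'' it as a known global result --- so there is nothing to compare against; your route (orthogonal decomposition $L^2_{(0,q)}(X)={\rm Ker\,}\square^{(q)}_b\oplus{\rm Range\,}\square^{(q)}_b$ via self-adjointness, bounded inverse theorem applied to the closed bijection $\square^{(q)}_b:{\rm Dom\,}\square^{(q)}_b\cap H_1\To H_1$, then $N:=N_1(I-\Pi^{(q)})$) is exactly the expected one. The only point worth making explicit is that self-adjointness of the Gaffney extension is itself a (standard) theorem rather than a formality, but the paper uses it freely as well.
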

	
	Consider an orbifold chart $(G_U,\widetilde{U})\rightarrow U$, let 
	\[S_-, S_+, A: \Omega^{0,q}_c(\widetilde{U})\rightarrow \Omega_c^{0,q}(\widetilde{U})\]
	be as in Theorem \ref{t-gue140205I}. Recall that  $S_-,S_+$ and $A$ are properly supported on $\widetilde{U}$. Define on $\widetilde{U}$ the following $G_U$-invariant smooth kernels
	\[\begin{split}
		&\hat{S}_-(x,y)=\sum_{h,g\in G_U}\frac{1}{\abs{G_U}}S_-(h\cdot x,g\cdot y),\quad \hat{S}_+(x,y)=\sum_{h,g\in G_U}\frac{1}{\abs{G_U}}S_+(h\cdot x,g\cdot y),\\ &\hat{A}(x,y)=\sum_{h,g\in G_U}\frac{1}{\abs{G_U}} A(h\cdot x,g\cdot y)\,. \end{split}\]
	
	It is not difficult to see that $\hat{S}_-$, $\hat S_+$ and $A$ are properly supported. For every $u \in \Omega^{0,q}_c(U)$, we have
	\[\hat{A}u=\sum_{h,g\in G_U}\frac{1}{\abs{G_U}}\int_{\widetilde{U}} A(h\cdot x,g\cdot y)u(y)\mathrm{dV}_{\widetilde U}(y)=\sum_{h,g\in G_U}\frac{1}{\abs{G_U}}\int_{\widetilde{U}} A(h\cdot x, y)u(g^{-1}\cdot y)\mathrm{dV}_{\widetilde U}(y),\]
	which is in $\Omega^{0,q}_c(U)$ because $A$ is proper and $u\circ g^{-1}$ is compactly supported. Similarly one can deal with $\hat{S}_-$ and $\hat{S}_+$. Thus, we have $\hat{S}_-,\hat{S}_+,\hat{A}\,:\,\Omega^{0,q}_c(U)\rightarrow \Omega_c^{0,q}(U)$. 
	
	By the properties of $S_-,S_+$ and $A$, we have
	\begin{equation} \label{eq:1}
		\square_b^{(q)}\hat{A}+\hat{S}_-+\hat{S}_+=I\quad\text{on }\tilde{U}
	\end{equation}
	and 
	\begin{equation} \label{eq:2}
		\square_b^{(q)}\hat{S}_- \equiv 0 \quad\text{on }\tilde{U}\,.
	\end{equation}
	Now, apply $\Pi^{(q)}$ to \eqref{eq:1}, for each $u\in \Omega_c^{0,q}(U)$ we have
	\begin{equation} \label{eq:pias}
		\Pi^{(q)}(\square_b^{(q)}\hat{A}+\hat{S}_-+\hat{S}_+)u=\Pi^{(q)}u\,,
	\end{equation} 
	where $\Pi^{(q)}u\in L^2_{(0,q)}(X)$ and the left hand side is well-defined since we have already checked that $\hat{S}_-,\hat{S}_+,\hat{A}$ are properly supported. Now, notice that for each $u,v \in \Omega_c^{0,q}(U)$ we have $(\,\Pi^{(q)}\square_b^{(q)}\hat{A}u\,|\, v\,)=(\,\hat{A}u\,|\,\square_b^{(q)}\Pi v\,)=0$ which implies $\Pi^{(q)}\square_b^{(q)}\hat{A}=0$ on $\Omega_c^{0,q}(U)$. Thus, by \eqref{eq:pias}, we also get the following desired property
	\begin{equation} \label{eq:3}
		\Pi^{(q)}(\hat{S}_-+\hat{S}_+)=\Pi^{(q)}\quad\text{on }\Omega_c^{0,q)}(U)\,.
	\end{equation}
	
	Recall from Theorem \ref{thm:global} the existence of the operator $N$; since $\hat{S}_-$ and $\hat{S}_+$ are properly supported we have
	\[N\square_b^{(q)}(\hat{S}_-+\hat{S}_+) + \Pi^{(q)}(\hat{S}_-+\hat{S}_+)=\hat{S}_-+\hat{S}_+ \quad \text{ on }\Omega_c^{0,q}(U)\,.  \]
	Now, we get that $\square_b^{(q)}(\hat{S}_-+\hat{S}_+)$ is a smoothing operator on $U$, which we will denote by $F$. Thus, by \eqref{eq:3} we write
	\begin{equation} \label{eq:4}
		NF + \Pi^{(q)}=\hat{S}_-+\hat{S}_+=: \hat{S} \quad \text{ on }\Omega_c^{0,q}(U)\,.
	\end{equation}
	We recall that since $F$ is smoothing and properly supported on $U$, it maps $\mathcal{E}'(U,T^{*0,q}X)$ to compactly supported smooth sections, so $F:\mathcal{E}'(U,T^{*0,q}X) \rightarrow \Omega_c^{0,q}(U)$. Thus, by composition we get in \eqref{eq:4} an operator $NF:\mathcal{E}'(U,T^{*0,q}X)\rightarrow L^2_{(0,q)}(X)$. Taking the adjoint in \eqref{eq:4}, we get $\hat{S}^*-\Pi^{(q)}=F^*N$ on $\Omega_c^{0,q}(U)$ and thus by making use again of \eqref{eq:4} we obtain 
	\begin{equation} \label{eq:fnnf}
		(\hat{S}^*-\Pi^{(q)})(\hat{S}-\Pi^{(q)})=F^*N^2F \quad \text{ on } \Omega_c^{0,q}(U)
	\end{equation}
	where it is easy to see that the right hand side is smoothing. Thus, the left hand side of \eqref{eq:fnnf} is also smoothing and we have
	\[\hat{S}^*\hat{S}-\hat{S}^*\Pi^{(q)}-\Pi^{(q)}\hat{S} +\Pi^2 \equiv 0\,. \]
	Eventually by \eqref{eq:3}  we have 
	\begin{equation}\label{e-gue220730ycd}
		\Pi^{(q)}\equiv \hat{S}^*\hat{S}.
	\end{equation}
	From \eqref{eq:1} and $\hat S^*\Box^{(q)}_b\equiv0$, we get 
	\begin{equation}\label{e-gue220730ycdI}
		\hat{S}^*\hat{S}\equiv\hat S.
	\end{equation}
	From \eqref{e-gue220730ycd} and \eqref{e-gue220730ycdI},we get 
	\begin{equation}\label{e-gue220730ycdII}
		\Pi^{(q)}\equiv  \hat{S}_-+\hat{S}_+.
	\end{equation} 
	
	Define on $\widetilde{U}$ the following kernels
	\[
	\tilde{S}_-(x,y)=\sum_{g\in G_U}S_-(x,g\cdot y),\quad \tilde{S}_+(x,y)=\sum_{g\in G_U}S_+(x,g\cdot y).\]
	Note that $\tilde{S}_-(x,y)$ and $\tilde{S}_+(x,y)$ are not $G_U$-invariant. 
	Put $\tilde{S}:=\tilde{S}_-+\tilde{S}_+$. 
	
	\begin{lem}\label{l-gue220730yyd}
		With the notations used above, we have 
		\begin{equation}\label{e-gue220730ycda}
			\mbox{$\hat S\equiv\tilde S$ on $\tilde U$.}
		\end{equation}
	\end{lem}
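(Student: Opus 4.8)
The plan is to deduce the lemma from a microlocal $G_U$-invariance property of the approximate Szeg\H{o} kernels $S_{\pm}$. First I would observe that, since the CR structure, the Hermitian metric, the volume form and the contact form $\omega_0$ on $\widetilde U$ are all $G_U$-invariant, each $g\in G_U$ acts on $\widetilde U$ as a CR diffeomorphism commuting with $\Box^{(q)}_b$, i.e.\ $g^*\Box^{(q)}_b=\Box^{(q)}_b g^*$. Writing $g$ also for the diffeomorphism $x\mapsto g\cdot x$, the operator whose distribution kernel is $S_{\pm}(g\cdot x, g\cdot y)$ is precisely the conjugate $g^*S_{\pm}(g^{-1})^*$ (here the invariance of $\mathrm{dV}_{\widetilde U}$ is used to identify the kernel of the conjugated operator).

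The key claim I would establish is
\[
S_{\pm}(g\cdot x, g\cdot y)\equiv S_{\pm}(x,y)\quad\text{on }\widetilde U,\ \ \forall g\in G_U.
\]
To prove this I would check that $g^*S_{\pm}(g^{-1})^*$ again satisfies all the defining properties of Theorem~\ref{t-gue140205I}: because conjugation by $g$ preserves $\Box^{(q)}_b$ we still have $\Box^{(q)}_b\bigl(g^*S_{\pm}(g^{-1})^*\bigr)\equiv0$ together with the approximate-projection relations $S\equiv S^*\equiv S^2$; and because $\omega_0$, the Levi form and the metric are $G_U$-invariant, the transformed phase $\varphi_{\pm}(g\cdot x, g\cdot y)$ satisfies the conditions \eqref{e-gue140205IV} and \eqref{e-gue140205V} with the \emph{same} $\omega_0$. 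In particular the normalization $\mathrm d_x\varphi_-|_{x=y}=-\omega_0$ guarantees that the $g$-conjugate of $S_-$ is again microlocally supported on $\Sigma^-$ (and likewise $S_+$ on $\Sigma^+$), so the two contributions do not mix; the leading symbol is preserved by Theorem~\ref{t-gue140205III}. Since the approximate Szeg\H{o} kernel characterized by these properties is unique modulo smoothing, the conjugate must be $\equiv S_{\pm}$.

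Granting the claim, the lemma reduces to an elementary computation with the group sums. For fixed $g,h\in G_U$, applying the claim with the substitution $g\mapsto h^{-1}$ (that is, translating both arguments by $h^{-1}$) gives $S_{\pm}(h\cdot x, g\cdot y)\equiv S_{\pm}(x, h^{-1}g\cdot y)$ on $\widetilde U$. Substituting into the definition of $\hat S_{\pm}$,
\[
\hat S_{\pm}(x,y)=\frac{1}{\abs{G_U}}\sum_{h,g\in G_U}S_{\pm}(h\cdot x,g\cdot y)\equiv\frac{1}{\abs{G_U}}\sum_{h\in G_U}\sum_{g\in G_U}S_{\pm}(x,h^{-1}g\cdot y).
\]
For each fixed $h$, as $g$ runs over $G_U$ so does $h^{-1}g$, hence the inner sum equals $\tilde S_{\pm}(x,y)$ and is independent of $h$; summing the $\abs{G_U}$ identical terms over $h$ cancels the factor $\abs{G_U}^{-1}$, yielding $\hat S_{\pm}\equiv\tilde S_{\pm}$. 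Adding the $+$ and $-$ contributions gives $\hat S\equiv\tilde S$, as the sums are finite.

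The main obstacle is the microlocal invariance claim: one must argue carefully that $g^*S_{\pm}(g^{-1})^*$ is genuinely another representative of the (unique modulo $C^\infty$) approximate Szeg\H{o} kernel attached to $\Sigma^{\pm}$. This rests on the uniqueness of microlocal Szeg\H{o} projectors with prescribed wave front set and phase, together with the verification that \emph{every} piece of geometric data entering the construction in~\cite{hsiao} is $G_U$-invariant; once this is in place, the remaining argument is purely combinatorial.
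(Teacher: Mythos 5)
Your argument is correct, but it takes a genuinely different route from the paper. The paper does not prove $G_U$-invariance of $S_{\pm}$ one group element at a time; instead it identifies $S_-+S_+$ modulo smoothing with the spectral projector $\Pi^{(q)}_{\leq\lambda,\widetilde U}=1_{[0,\lambda]}(\Box^{(q)}_{b,\widetilde U})$ of the Kohn Laplacian on the open CR manifold $\widetilde U$ (citing \cite[Theorem 4.7]{HM}), observes that the averaging projector $Q$ onto $G_U$-invariant forms commutes \emph{exactly} with this spectral projector (via the resolvent and the Helffer--Sj\"ostrand formula), and then reads off $Q(S_-+S_+)\equiv(S_-+S_+)Q\equiv Q(S_-+S_+)Q$ together with the identities $\hat S=\abs{G_U}\,Q(S_-+S_+)Q$ and $\tilde S=\abs{G_U}\,(S_-+S_+)Q$. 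Your route replaces this by the pointwise-in-$g$ statement $S_{\pm}(g\cdot x,g\cdot y)\equiv S_{\pm}(x,y)$, deduced from uniqueness modulo smoothing of the microlocal Hodge decomposition, followed by a re-indexing of the double sum; note that for the lemma you only need this for the sum $S_-+S_+$, so the wave-front-set argument separating the $\Sigma^-$ and $\Sigma^+$ contributions is not even required. The trade-off: the paper's argument gets invariance ``for free'' from the exact $G_U$-equivariance of the functional calculus, at the price of invoking \cite[Theorem 4.7]{HM}; yours is more elementary in spirit and proves a slightly stronger ($g$-by-$g$) invariance, but it leans on a uniqueness-modulo-smoothing statement for the system of Theorem~\ref{t-gue140205I} that this paper never states explicitly --- it does hold (it can be extracted from the relations $\Box^{(q)}_bA+S_-+S_+=I$, $S\equiv S^*\equiv S^2$, $\Box^{(q)}_bS\equiv0$ by the same composition tricks used in Section~\ref{s-gue220730yyd} to show $\Pi^{(q)}\equiv\hat S$, or from \cite{hsiao}), so to make your proof self-contained you should either prove that uniqueness or cite it precisely. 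You would also need to note, as you implicitly do, that conjugation by $g$ is unitary because $\mathrm{dV}_{\widetilde U}$ is $G_U$-invariant, so that the relations $S\equiv S^*$ are preserved.
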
 
	
	\begin{proof}
		The open set $\widetilde U$ is a non-compact CR manifold. Let 
		\[\Box^{(q)}_{b,\widetilde U}: {\rm Dom\,}\Box^{(q)}_{b,\widetilde U}\subset L^2_{(0,q)}(\widetilde U)\To L^2_{(0,q)}(\widetilde U)\]
		be the Gaffney extension of Kohn Laplacian on $\widetilde U$ with respect to $\langle\,\cdot\,|\,\cdot\,\rangle$ and $\mathrm{dV}_{\widetilde U}$. $\Box^{(q)}_{b,\widetilde U}$ is self-adjoint. Fix $\lambda>0$ and set $\Pi^{(q)}_{\leq\lambda,\widetilde U}:=1_{[0,\lambda]}(\Box^{(q)}_{b,\widetilde U})$, where 
		$1_{[0,\lambda]}(\Box^{(q)}_{b,\widetilde U})$ denotes the functional calculus of $\Box^{(q)}_{b,\widetilde U}$ with respect to $1_{[0,\lambda]}$. From~\cite[Theorem 4.7]{HM}, we have 
		\begin{equation}\label{e-gue220730ycde}
			\Pi^{(q)}_{\leq\lambda,\widetilde U}\equiv S_-+S_+\ \ \mbox{on $\widetilde U$}. 
		\end{equation}
		Let $L^2_{(0,q)}(\widetilde U)^{G_U}$ be the space of $G_U$-invariant $L^2$ $(0,q)$ forms. Let 
		\[Q: L^2_{(0,q)}(\widetilde U)\To L^2_{(0,q)}(\widetilde U)^{G_U}\]
		be the orthogonal projection. We claim that 
		\begin{equation}\label{e-gue220722ycdf}
			\mbox{$Q\circ\Pi^{(q)}_{\leq\lambda,\widetilde U}=\Pi^{(q)}_{\leq\lambda,\widetilde U}\circ Q$ on $L^2_{(0,q)}(\widetilde U)$}. 
		\end{equation} 
		Let $\tau\in\mathcal{C}^\infty_c(]0,\lambda[)$. Since $Q$ commutes with $\Box^{(q)}_{b,\widetilde U}$, we have 
		\begin{equation}\label{e-gue220722ycdg}
			Q(z-\Box^{(q)}_{b,\widetilde U})^{-1}=(z-\Box^{(q)}_{b,\widetilde U})^{-1}Q\ \ \mbox{on $L^2_{(0,q)}(\widetilde U)$},
		\end{equation}
		for every $z\in\mathbb C$ with ${\rm Im\,}z\neq0$. From \eqref{e-gue220722ycdg} and Helffer-Sj\"ostrand formula, we have 
		\begin{equation}\label{e-gue220722ycdh}
			\begin{split}
				&\tau(\Box^{(q)}_{b,\widetilde U})Q=\frac{1}{2\pi i}\int_{\mathbb C}\frac{\pr\tilde\tau}{\pr\ol z}(z-\Box^{(q)}_{b,\widetilde U})^{-1}Q dz\wedge d\overline z\\
				&=\frac{1}{2\pi i}\int_{\mathbb C}Q
				\frac{\pr\tilde\tau}{\pr\ol z}(z-\Box^{(q)}_{b,\widetilde U})^{-1}dz\wedge d\ol z=Q\tau(\Box^{(q)}_{b,\widetilde U}),
			\end{split}
		\end{equation}
		where $\tilde\tau$ is an almost analytic extension of $\tau$.
		From \eqref{e-gue220722ycdh}, we get the claim \eqref{e-gue220722ycdf}. 
		
		From \eqref{e-gue220730ycde} and \eqref{e-gue220722ycdf}, we get 
		\begin{equation}\label{e-gue220722ycdp}
			Q\circ (S_-+S_+)\equiv(S_-+S_+)Q\equiv Q(S_-+S_+)Q.
		\end{equation}
		Note that $\hat S=\abs{G_U}Q(S_-+S_+)Q$, $\tilde S=\abs{G_U}(S_-+S_+)Q$. From this observation and \eqref{e-gue220722ycdp}, we get \eqref{e-gue220730ycda}. 
	\end{proof}
	
	From \eqref{e-gue220730ycdII} and \eqref{e-gue220730ycda}, Theorem~\ref{thm:main} follows. 	
	
	\section{Applications}
	
	\subsection{Proof of Theorem \ref{thm:dim}}
		We have the following formula
		\[\dim{\rm Ker\,}\Box^{(q)}_{b,kp}=\int_X {\rm Tr\,}\Pi^{(q)}_{kp}(x,x) \mathrm{dV}_X(x).  \]
		As explained in the introduction, $X$ can be written as a disjoint union based on the orbit type induced by the circle action and $X_{\ell_0}$ is an open and dense subset of $X$. 
		Moreover, from \eqref{eq:szegos1free}, we see that there is a constant $C>0$ independent of $k$ such that 
		\begin{equation}\label{e-gue220801ycd}
			{\rm Tr\,}\Pi^{(q)}_{kp}(x,x)\leq Ck^n,
		\end{equation}
		for all $x\in X$. From \eqref{e-gue220801ycd}, we can apply Lebesgue's dominated convergence theorem and get 
		\begin{equation}\label{e-gue220801ycdI}
			\lim_{k\To+\infty}k^{-n}\dim{\rm Ker,}\Box^{(q)}_{b,kp}=\int_{X_{\ell_0}}
			\lim_{k\To+\infty}k^{-n}{\rm Tr\,}\Pi^{(q)}_{kp}(x,x)\mathrm{dV}_X(x).\end{equation}
		From \eqref{eq:szegos1free} and \eqref{e-gue220801ycdI}, Theorem~\ref{thm:dim} follows. 
	
	\subsection{Proof of Theorem~\ref{thm:genkodaira}}\label{s-gue220805yyd}
	
	We will identify $H^0(M,L^{kp})$ with ${\rm Ker\,}\Box^{(0)}_{b,kp}$ and since we work locally on an orbifold chart $(\widetilde{U},G_U)\rightarrow U$ we always write $x$ for local coordinates on $\tilde{U}$.	Let us first proof that the differential of the map $\Phi_{kp}$ is injective, if $k\gg1$. Consider an open set $U$ around a point $x_0\in X_\ell$ and an orbifold chart $(\widetilde{U},G_U)\rightarrow U$ such that $x_0$ is a fix point of $G_U$.
	Let $x=(x_1,\ldots,x_{2n+1})$ be local coordinates of $\widetilde U$
	with $x(x_0)=0$ and $T={\pr}/{\pr x_{2n+1}}$. Let $z=(z_1,\ldots,z_n)\in\mathbb C^n$, $z_j=x_{2j-1}+ix_{2j}$, $j=1,\ldots,n$. Let $\chi\in\mathcal{C}^\infty_c(\widetilde U)$ with $\int_X\chi\, \mathrm{dV}_X=1$. For $k\in\mathbb N$, put $$\chi_k(x)=\chi(k^{\frac{1}{2}+\varepsilon}z,k^{1+\varepsilon}x_{2n+1})\quad \text{ and }\quad\hat\chi_k(x):=\sum_{g\in G_U}\chi_k(g\cdot x)\,,$$ where $\varepsilon>0$ is a small constant. 
	Let us define 
	\[u_k:= k^{1+(2n+1)\varepsilon}\Pi^{(0)}_{kp}(\hat\chi_k) \in {\rm Ker\,}\Box^{(0)}_{b,kp}.\]
	From \eqref{eq:szegos1free}, we can check that 
	\begin{equation}\label{e-gue220805ycd}
		\lim_{k\To+\infty}u_k(x_0)=p^n\frac{\ell}{2}\pi^{-n-1}\abs{\det\mathcal{L}_{x_0}}.
	\end{equation}
	For each $j=1,\dots n$, it is easy to see that the function
	\[\tilde{f}_j:\widetilde{U}\rightarrow \mathbb{C}\,,\qquad z \mapsto f_j(z)=\sum_{g\in G_U} g\cdot z_j\]
	is $G_U$-invariant and smooth and thus descends naturally to a continuous function $f_j:U \rightarrow \mathbb{C}$ on $U$. Let us also set
	\[u_k^{(j)}=k^{1+(2n+1)\varepsilon}\Pi^{(0)}_{kp}(f_j\hat\chi_k)\in {\rm Ker\,}\Box^{(0)}_{b,k}\,. \]
	From $\Psi(x,x)=0$ and by using integration by parts, we get 
	\begin{equation}\label{eq:uj}
		\begin{split}
			&\lim_{k\To+\infty}(\frac{\partial}{\partial z_q}u^{(j)}_k)(x_0)=\delta_{j,q}p^n\frac{\ell}{2}\pi^{-n-1}\abs{\det\mathcal{L}_{x_0}},\\
			&\lim_{k\To+\infty}u^{(j)}_k(x_0)=0,
		\end{split}
	\end{equation}
	for every $j, q=1,\ldots,n$. 
	Now, consider sections $g_1,\dots,g_{d_{pk}-n-1}$ such that 
	\[\{u_k,u_k^{(1)},\dots,u_k^{(n)}, g_1,\dots,g_{d_{kp}-n-1}\}\]
	is a basis of $H^0(M,L^{kp})$.
	As a consequence of \eqref{e-gue220805ycd} and \eqref{eq:uj}, it is easy to see that the map
	\begin{equation}
		\label{eq:map}
		x \mapsto \left(\frac{u^1_k}{u_k}(x),\dots,\frac{u^n_k}{u_k}(x),\frac{g_1}{u_k}(x),\dots,\frac{g_{d_k-n-1}}{u_k}(x)\right) 
	\end{equation}
	is injective if $k$ is sufficiently large. The Kodaira embedding map is constructed by using an orthonormal basis of $H^0(M,L^{kp})$; the basis used to define the map \eqref{eq:map} is not orthonormal in general. By considering the change of basis matrix we get that $\mathrm{d}\Phi_{kp}$ is injective if $k$ is sufficiently large.
	
	We now need to prove that $\Phi_{kp}$ is globally injective. By absurd, up to passing to a subsequence, suppose that there are $x_k,\,y_k\in M$ with $x_k\neq y_k$ such that $\Phi_{kp}(x_k)=\Phi_{kp}(y_k)$, for each $k$. We prove the theorem by showing that we get a contradiction.
	
	Assume that $\lim_{k\To+\infty}x_k=x_0$, $\lim_{k\To+\infty}y_k=y_0$. If $x_0\neq y_0$, we can repeat the procedure in~\cite{hsiao1} and find $u_k, v_k\in H^0(M,L^{kp})$ such that $\abs{u_k(x_k)}^2_{h^{L^{kp}}}\geq Ck^n$, $\abs{v_k(x_k)}^2_{h^{L^{kp}}}\leq\frac{C}{2}k^n$, $\abs{u_k(y_k)}^2_{h^{L^{kp}}}\leq\frac{C}{2}k^n$, $\abs{v_k(y_k)}^2_{h^{L^{kp}}}\geq Ck^n$, for all $k\gg1$, where $C>0$ is a constant independent of $k$. Thus, $\Phi_{kp}(x_k)\neq\Phi_{kp}(y_k)$, for $k$ large. We get a contradiction and thus we must have $x_0=y_0$. 
	
	For every $k\in\mathbb N$, put 
	\[A_k:=\set{(x,y)\in M\times M;\, \Phi_k(x)=\Phi_k(y)}.\]
	We need 
	
	\begin{lem}\label{l-gue220806yyd}
		There is a constant $k_0$ such that for every $j\in\mathbb N$ and every $k\in\mathbb N$ with $k\geq k_0$, we have
		\[A_{j+k}\subset A_j.\]
	\end{lem}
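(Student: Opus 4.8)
The plan is to deduce the inclusion from the multiplicative structure of the spaces $\Ker\Box^{(0)}_{b,m}$, transported through the identification $\Ker\Box^{(0)}_{b,m}\cong H^0(M,L^m)$ recalled above. Throughout, fix $(x,y)\in M\times M$ and choose lifts $\widetilde x,\widetilde y\in X$ in the circle bundle $X\subset L^*$. Each $u\in\Ker\Box^{(0)}_{b,m}$ is a CR function on $X$ of weight $m$ for the $S^1$-action, and I would first record the elementary reformulation
\[(x,y)\in A_m\iff\text{there is }\lambda\neq0\text{ with }u(\widetilde x)=\lambda\,u(\widetilde y)\text{ for all }u\in\Ker\Box^{(0)}_{b,m},\]
that is, the two evaluation functionals $\mathrm{ev}_{\widetilde x},\mathrm{ev}_{\widetilde y}\colon\Ker\Box^{(0)}_{b,m}\To\mathbb C$ are proportional with nonzero factor. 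Replacing $\widetilde x$ by $e^{i\theta}\widetilde x$ multiplies $\mathrm{ev}_{\widetilde x}$ by $e^{im\theta}$, so this condition descends to $M$ and is independent of the chosen lifts.

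The key algebraic input is the multiplication map. If $u\in\Ker\Box^{(0)}_{b,j}$ and $v\in\Ker\Box^{(0)}_{b,k}$, then $\ddbar_b(uv)=0$ and $uv$ has weight $j+k$; since elements of $\Ker\Box^{(0)}_b$ are smooth by hypoellipticity, $uv\in\Ker\Box^{(0)}_{b,j+k}$. Under $\Ker\Box^{(0)}_{b,m}\cong H^0(M,L^m)$ this is exactly the map $H^0(L^j)\otimes H^0(L^k)\To H^0(L^{j+k})$.

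Now suppose $(x,y)\in A_{j+k}$, so that $\mathrm{ev}_{\widetilde x}=\lambda\,\mathrm{ev}_{\widetilde y}$ on $\Ker\Box^{(0)}_{b,j+k}$ with $\lambda\neq0$. Evaluating on products $uv$ gives
\[u(\widetilde x)\,v(\widetilde x)=\lambda\,u(\widetilde y)\,v(\widetilde y),\qquad u\in\Ker\Box^{(0)}_{b,j},\ v\in\Ker\Box^{(0)}_{b,k}.\]
For $k\geq k_0$ I would use the diagonal asymptotics of Theorem~\ref{t-gue220731yyd}: the leading coefficient $b_0(\widetilde z,\widetilde z)=\tfrac12\pi^{-n-1}\abs{\det\mathcal L_{\widetilde z}}\tau_{\widetilde z,n_-}$ is nonvanishing, whence $\Pi^{(0)}_k(\widetilde z,\widetilde z)>0$ for $k$ large, producing a weight-$k$ CR function that does not vanish at a prescribed point. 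Since the two subspaces of $\Ker\Box^{(0)}_{b,k}$ consisting of the sections vanishing at $\widetilde x$, respectively at $\widetilde y$, are then proper and a vector space is never the union of two proper subspaces, I may pick a single $v_0\in\Ker\Box^{(0)}_{b,k}$ with $v_0(\widetilde x)\neq0$ and $v_0(\widetilde y)\neq0$. Dividing the displayed identity by $v_0(\widetilde x)$ yields $u(\widetilde x)=\mu\,u(\widetilde y)$ for all $u\in\Ker\Box^{(0)}_{b,j}$, where $\mu=\lambda\,v_0(\widetilde y)/v_0(\widetilde x)\neq0$. By the reformulation of the first step this means $(x,y)\in A_j$, so $A_{j+k}\subset A_j$. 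The threshold $k_0$ is obtained from the uniform estimate by compactness of $M$; it is independent of $j$, since the argument uses no property of $L^j$ beyond the multiplication map.

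The main obstacle is exactly the non-vanishing used in the previous step, which is delicate at the singular points of $X$: at a point of $X_\ell$ every weight-$k$ CR function is forced to vanish unless $\ell\mid k$, so base-point freeness of $L^k$ genuinely fails for those $k$, and $\Pi^{(0)}_k(\widetilde z,\widetilde z)$ cannot be made positive uniformly for \emph{every} large $k$. This is precisely why the embedding theorem is phrased for the powers $kp$ with $p=\mathrm{lcm}(\ell_0,\ldots,\ell_u)$: taking the increment to be a multiple of $p$ clears every isotropy order at once and restores the non-vanishing of $v_0$ at $\widetilde x$ and $\widetilde y$. I would also verify that $\Ker\Box^{(0)}_{b,m}\cong H^0(M,L^m)$ intertwines the pointwise product with the tensor product, and that the evaluations $u(\widetilde x)$ are read consistently through the orbifold charts $(\widetilde U,G_U)$.
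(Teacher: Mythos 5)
Your proof is correct and follows essentially the same route as the paper's: the key inputs in both are the multiplicativity $\Ker\Box^{(0)}_{b,jp}\cdot\Ker\Box^{(0)}_{b,kp}\subset\Ker\Box^{(0)}_{b,(j+k)p}$ and the existence, for $k\geq k_0$ uniformly in the point (via the diagonal asymptotics behind \eqref{e-gue220805ycd}), of a level-$kp$ element of $\Ker\Box^{(0)}_{b,kp}$ nonvanishing at a prescribed point. The only cosmetic difference is that the paper argues by contraposition --- picking $v_j$ with $v_j(x)=1$, $v_j(y)=0$ and multiplying by a section nonvanishing at $x$ alone --- which lets it bypass your ``a vector space is not the union of two proper subspaces'' step for simultaneous nonvanishing at both points.
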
 
	
	\begin{proof}
		From the proof of \eqref{e-gue220805ycd}, we see that there is a $k_0\in\mathbb N$ such that for every $x\in X$ and every $k\in\mathbb N$, $k\geq k_0$, we can find $u_k\in{\rm Ker\,}\Box^{(0)}_{b,kp}$ such that $\abs{u_k(x)}\geq\frac{1}{2}$. Fix $j\in\mathbb N$, let $k\in\mathbb N$ with $k\geq k_0$ and let $(x,y)\notin A_j$. We claim that $(x,y)\notin A_{j+k}$. Since $(x,y)\notin A_j$, we can find 
		$v_j\in{\rm Ker\,}\Box^{(0)}_{b,jp}$ so that $v_j(x)=1$ and $v_j(y)=0$. Let 
		$u_k\in{\rm Ker\,}\Box^{(0)}_{b,kp}$ such that $\abs{u_k(x)}\geq\frac{1}{2}$. Then, $u_kv_j\in{\rm Ker\,}\Box^{(0)}_{b,(j+k)p}$, 
		$\abs{(u_kv_j)(x)}\geq\frac{1}{2}$, $\abs{(u_kv_j)(y)}=0$. Thus, $(x,y)\notin A_{j+k}$. The lemma follows. 
	\end{proof} 
	
	We now have $\lim_{k\To+\infty}x_k=\lim_{k\To+\infty}y_k=0$. 
	Consider an open set $U$ around a given point $x_0\in X_\ell$ and an orbifold chart $(\widetilde{U},G_U)\rightarrow U$ such that $x_0$ is a fix point of $G_U$.
	Let $x=(x_1,\ldots,x_{2n+1})$ be local coordinates of $\widetilde U$
	with $x(x_0)=0$. 
	For every $j\in\mathbb N$, take $k_j\in\mathbb N$ with $k_j\geq k_0$ 
	such that 
	\begin{equation}\label{e-gue220806yyd}
		\abs{x_{j+k_j}}\leq\frac{1}{j^3}\quad \text{ and }\quad \abs{y_{j+k_j}}\leq\frac{1}{j^3}\,.
	\end{equation}
	From Lemma~\ref{l-gue220806yyd}, we see that $(x_{j+k_j}, y_{j+k_j})\subset A_j$. Thus, for every $j$, we replace $(x_j,y_j)$ by $(x_{j+k_j},y_{j+k_j})$ and conclude that there are $x_k,\,y_k\in M$, $x_k\neq y_k$ such that $\Phi_{kp}(x_k)=\Phi_{kp}(y_k)$, for each $k$ and 
	\begin{equation}\label{e-gue220806ycdb}
		\lim_{k\To+\infty}\abs{k^2x_k}=\lim_{k\To+\infty}\abs{k^2y_k}=0\,.
	\end{equation}
	Put
	\[f_k(t)=\frac{\lvert \Pi^{(0)}_{kp}(tx_k+(1-t)y_k,y_k)\rvert^2}{\Pi^{(0)}_{kp}(tx_k+(1-t)y_k)\Pi^{(0)}_{kp}(y_k)}   \]
	where we write $\Pi^{(0)}_{kp}(x)=\Pi^{(0)}_{kp}(x,x)$, for any $x\in X$. From 
	\eqref{e-gue220806ycdb}, we can check that 
	\begin{equation}f_k(t)=\sum_{g\in G_U}e^{-2kp\,\mathrm{Im}\Psi(t{x}_k+(1-t){y}_k,g\cdot{y}_k)}\cdot\left[1+\frac{1}{k}\tilde{R}_k(t)+\varepsilon_k(t)\right], \label{eq:aa}
	\end{equation}
	where $$\abs{\pr^j_t\widetilde{R}_k(t)}\leq C_j\abs{x_k-y_k}^j\quad \text{and}\quad \abs{\pr^j_t\varepsilon_k(t)}\leq C_{N,j}k^{-N}\abs{x_k-y_k}^j\,,$$ for all $j\in\mathbb N\cup\set{0}$ and every $N\in\mathbb N$, where $C_j, C_{N,j}>0$ are constants independent of $k$. For ease of notation, pose $F_k(t):=-2kp\,\mathrm{Im}\Psi(t{x}_k+(1-t){y}_k,g\cdot {y}_k)$ for given $x_k$, $y_k$ and $g\in G_U$. 
	From \eqref{e-gue220806ycdb}, we have
	\[\lvert F'_k(t) \rvert=\lvert\langle -2kp\,\mathrm{Im}\Psi'_x(t {x}_k+(1-t) {y}_k,g\cdot {y}_k),  {x}_k-{y}_k \rangle\rvert \leq \frac{1}{c_0k}\lvert {x}_k-{y}_k\rvert  \]
	and
	\[F''_k(t)=\langle -2kp\,\mathrm{Im}\Psi''_x(t{x}_k+(1-t){y}_k,g\cdot {y}_k), x_k-y_k \rangle < -c_0k\,\lvert x_k- y_k\rvert^2\]
	for the computation of the second derivative of \eqref{eq:aa}, here $c_0$ is a positive constant. We get
	\begin{equation}\limsup_k\frac{f''(t)}{k\,\lvert x_k-y_k\rvert^2}< 0  \label{eq:lim}
	\end{equation}
	for each $t$.
	
	By Cauchy-Schwartz inequality we have $0\leq f_k(t)\leq 1$, for any $t\in [0,1]$. From $\Phi_{kp}(x_k)=\Phi_{kp}(y_k)$, for each $k$, we can check that $f_k(0)=f_k(1)=1$. Thus, for each $k$, there is a $t_k\in [0, 1]$ such that $f''(t_k)=0$. Hence,
	\[\limsup_k\frac{f''(t)}{k\,\lvert x_k-y_k\rvert^2}\geq 0 \]
	which contradicts \eqref{eq:lim}.

	\subsection{Quantization commutes with reduction}
	\label{sec:app}
	
	In this section, we assume that $X$ admits a CR compact Lie group action $G$, ${\rm dim\,}G=d$. 
	We will work Assumption~\ref{a-gue170123I} and suppose that $X$ is strongly pseudoconvex. We will use the same notations as in Section~\ref{s-gue220801yyd}. The moment map associated to $\omega_0$ is the map given by $\mu: X\To \mathfrak{g}^*$ such that for all $x\in X$ and $\xi\in\mathfrak{g}$, we have
	\begin{equation}\label{e-gue220801yydp}
		\langle\,\mu(x)\,,\,\xi\,\rangle=\omega_0(x)(\xi_X).
	\end{equation}
	Let us recall the following theorem which is well-known in the setting of symplectic manifolds. Throughout this section we will always assume that $0$ is a regular value, and thus $\mu^{-1}(0)$ is a suborbifold of $X$. 
	
	\begin{prop}
		Suppose that $0$ is a regular value, then the action of $G$ on $\mu^{-1}(0)$ is locally free.
	\end{prop}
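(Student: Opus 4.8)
The plan is to reduce local freeness to a statement about the isotropy Lie algebra and then exploit the regularity of $0$ through the moment-map identity. Recall that the $G$-action on $\mu^{-1}(0)$ is locally free exactly when, for each $x\in\mu^{-1}(0)$, the isotropy algebra $\mathfrak g_x:=\set{\xi\in\mathfrak g;\,\xi_X(x)=0}$ is trivial: the stabilizer $G_x$ is a closed, hence compact, subgroup of $G$ whose Lie algebra is $\mathfrak g_x$, so $\mathfrak g_x=\set{0}$ forces $G_x$ to be finite. All the data involved---$\omega_0$, the induced vector fields $\xi_X$, and the moment map $\mu$---are global smooth objects on $X$ defined through $G_U$-invariant lifts, so the computation below can be read off on a uniformizing chart $(\widetilde U,G_U)\To U$ about $x$, where it becomes the classical manifold computation; the finite group $G_U$ plays no role in the infinitesimal count.

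First I would record the contact moment-map identity. By Assumption~\ref{a-gue170123I} we have $g^\ast\omega_0=\omega_0$ for all $g\in G$; differentiating along the flow generated by $\xi\in\mathfrak g$ yields $\mathcal L_{\xi_X}\omega_0=0$. Setting $\mu^\xi:=\langle\,\mu\,,\,\xi\,\rangle=\iota_{\xi_X}\omega_0$ and invoking Cartan's formula gives
\[
\mathrm d\mu^\xi=\mathrm d\,\iota_{\xi_X}\omega_0=\mathcal L_{\xi_X}\omega_0-\iota_{\xi_X}\,\mathrm d\omega_0=-\iota_{\xi_X}\,\mathrm d\omega_0,
\]
so that $\langle\,\mathrm d\mu_x(v)\,,\,\xi\,\rangle=\mathrm d\omega_0(v,\xi_X(x))$ for every $v\in T_xX$.

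The conclusion is then immediate. Suppose $x\in\mu^{-1}(0)$ and $\xi\in\mathfrak g_x$, i.e. $\xi_X(x)=0$. The identity above forces $\langle\,\mathrm d\mu_x(v)\,,\,\xi\,\rangle=0$ for all $v$, so $\xi$ annihilates the image of $\mathrm d\mu_x$. Since $0$ is a regular value, $\mathrm d\mu_x\colon T_xX\To\mathfrak g^\ast$ is surjective, whence $\xi$ annihilates all of $\mathfrak g^\ast$ and $\xi=0$. Therefore $\mathfrak g_x=\set{0}$ for every $x\in\mu^{-1}(0)$, which by the first paragraph is exactly local freeness.

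I do not anticipate a genuine analytic difficulty: this is the orbifold transcription of the standard symplectic fact that regularity of $0$ forces the action on $\mu^{-1}(0)$ to be locally free. The only points requiring care are formal: verifying that $\mathcal L_{\xi_X}\omega_0=0$, Cartan's formula and the identity $\mathrm d\mu^\xi=-\iota_{\xi_X}\mathrm d\omega_0$ are all legitimate on the uniformizing chart with the lifted, $G_U$-equivariant data, and that ``$0$ is a regular value'' is read as surjectivity of the lifted differential at the preimages of points of $\mu^{-1}(0)$. Once these identifications are in place, no phenomenon beyond the manifold case intervenes.
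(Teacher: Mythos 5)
Your proof is correct and follows essentially the same route as the paper: both arguments rest on the identity $\langle\,\mathrm d\mu_x(v)\,,\,\xi\,\rangle=\pm\,\mathrm d\omega_0(\xi_X(x),v)$ (which you derive explicitly via $\mathcal L_{\xi_X}\omega_0=0$ and Cartan's formula, and which the paper asserts directly) together with the surjectivity of $\mathrm d_x\mu$ coming from the regularity of $0$. The paper phrases it contrapositively (for $\xi\neq0$ one finds $V$ with $\mathrm d\omega_0(\xi_X(x),V)\neq0$, hence $\xi_X(x)\neq0$) while you argue directly that $\mathfrak g_x=\set{0}$, but this is the same argument.
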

	\begin{proof}
		Since $0$ is a regular value then for each $p\in \mu^{-1}(0)$, $\mathrm{d}_p\mu:T_pX \rightarrow \mathfrak{g}^{*}$ is surjective. Let us identify $\mathfrak{g}\cong \mathfrak{g}^*$, then for every $\xi \in \mathfrak{g}\setminus \{0\}$ there exists $V\in T_pX$ such that $\mathrm{d}_p\mu(V)=\xi$. Thus, we have
		\[0\neq \langle\mathrm{d}_p\mu(V),\xi \rangle = \mathrm{d}_p\omega_0(\xi_X(p),V)=-b_p(\xi_X(p),J_pV) \]
		and we can conclude that $\xi_X(p)\neq 0$.
	\end{proof}
	
	\begin{prop}
		Suppose $0$ is a regular value, then $X_G=\mu^{-1}(0)/G$ is an orbifold.
	\end{prop}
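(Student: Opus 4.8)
The plan is to build orbifold charts on $X_G$ directly, using the local freeness established in the previous proposition together with a slice argument carried out inside the orbifold charts of $X$. First I would record that local freeness forces every stabilizer $G_p$, $p\in\mu^{-1}(0)$, to be a closed discrete, hence finite, subgroup of the compact group $G$; moreover $\mu^{-1}(0)$ is a closed subset of the compact orbifold $X$ and $G$ is compact, so the quotient $X_G=\mu^{-1}(0)/G$ is Hausdorff and paracompact. The target is therefore to present a neighborhood of each orbit $G\cdot p$ in $X_G$ as a quotient of a manifold by a finite group, and to check that these presentations satisfy the compatibility conditions of the definition in Section~\ref{sec:orbi}.

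Next I would set up the local model. Fix $p\in\mu^{-1}(0)$ and an orbifold chart $(\widetilde U,G_U)\to U$ of $X$ with a lift $\widetilde p\in\widetilde U$. Since $\omega_0$ and each induced field $\xi_X$ lift to $G_U$-invariant objects on $\widetilde U$, the defining relation \eqref{e-gue220801yydp} shows that $\mu$ lifts to a $G_U$-invariant smooth map $\widetilde\mu\colon\widetilde U\to\mathfrak g^*$; because $0$ is a regular value, $\widetilde Y:=\widetilde\mu^{-1}(0)$ is a $G_U$-invariant submanifold of $\widetilde U$ with $\mu^{-1}(0)\cap U\cong\widetilde Y/G_U$. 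Shrinking $U$, the smooth $G$-action on $X$ provides, for $g$ in a neighborhood of the identity, local lifts $\widetilde{g\cdot}\colon\widetilde U\to\widetilde U$ which preserve $\widetilde Y$ (by invariance of $\widetilde\mu$) and are equivariant via the homomorphisms $\overline{g\cdot}_U\colon G_U\to G_U$.

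Then I would run the slice construction on $\widetilde Y$. Since $G_{\widetilde p}$ is finite, the orbit $G\cdot\widetilde p$ is an embedded $d$-dimensional submanifold of $\widetilde Y$; averaging a metric and exponentiating the normal bundle produces a slice $S\subset\widetilde Y$ through $\widetilde p$, transverse to the orbit, on which $G_{\widetilde p}$ acts and which may be taken invariant under the isotropy $\Gamma:=\{\gamma\in G_U\colon\gamma\widetilde p=\widetilde p\}$. Let $H$ be the group of transformations of $S$ generated by $G_{\widetilde p}$ and by $\Gamma$, using the equivariance above to see that these normalize one another. The usual slice theorem for the locally free $G$-action, combined with $U\cong\widetilde U/G_U$, identifies a neighborhood of $[p]$ in $X_G$ with $S/H$, giving an orbifold chart $(S,H)\to S/H$ of dimension $2n+1-2d$.

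Finally I would verify that these charts assemble into an atlas: for $[p]$ in the image of a larger chart one builds the required injective morphisms between the slice models from the morphisms $\Phi_{V,U}$ of $X$ together with the inclusions of slices, and checks the cocycle condition $\varphi_{WU}=\varphi_{WV}\circ\varphi_{VU}$. The main obstacle is exactly the bookkeeping of the preceding paragraph: the $G$-action is given only through local lifts interacting with the structure groups $G_U$, so the delicate point is to prove that $G_{\widetilde p}$ and $\Gamma$ generate a \emph{finite} group $H$ acting smoothly on a common slice, and that $S/H$ reproduces $\mu^{-1}(0)/G$ near $[p]$ rather than a coarser or finer identification. Everything else—smoothness and invariance of $\widetilde\mu$, the submanifold property of $\widetilde Y$, and the existence of the slice—is routine once local freeness is in hand.
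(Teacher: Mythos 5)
Your proposal follows essentially the same route as the paper: the paper's proof consists of invoking local freeness (the previous proposition) and then citing Corollary B.31 of Ginzburg--Guillemin--Karshon, i.e.\ the slice theorem, to conclude. You unfold that slice argument inside the orbifold charts of $X$ and correctly flag the only genuinely delicate point --- that the isotropy $G_{\widetilde p}$ of the $G$-action and the chart group $\Gamma\subset G_U$ combine into a finite group acting on a common slice --- which is exactly the structure group $G_p\times G_U$ that the paper later assigns to the charts of $X_G$ without further comment.
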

	\begin{proof}
		By the previous proposition the action of $G$ on $\mu^{-1}(0)$ is locally free. From the proof of Corollary B$.31$ in \cite{ggk} we can conclude.
	\end{proof}
	
	Corollary B$.31$ in \cite{ggk} is a consequence of the Slice Theorem which states that there exists a $G$-equivariant diffeomorphism $\phi$ between $G\times_{G_x} D$ and a tubular neighborhood of the orbit $G\cdot x$, we refer to Appendix B in \cite{ggk} for precise definitions. More precisely, $\phi$ is induced by the exponential map $E_p:U_0\rightarrow U\subseteq M$, where $U_0$ is an open neighborhood of $0$ in $T_xM$. We have $G_x$ equivariant decomposition 
	\[T_xX = T_x(G\cdot x)\oplus W \] 
	where $W$ is the normal bundle to $G\cdot x$. As a consequence of this, it is easy to see the the CR orbifold structure on $X$ descends naturally to a CR orbifold structure on $X$ and so the other $G$-invariant structures defined on $X$. Moreover, we can repeat the proof of~\cite[Theorem 2.5]{HMM} and deduce that $X_G$ is a strongly pseudoconvex CR orbifold. 
	
	We take the Hermitian metric $\langle\,\cdot\,|\,\cdot\,\rangle$ on $\mathbb CTX$ so that 
	$\langle\,\cdot\,|\,\cdot\,\rangle$ is $G$-invariant. Let $\langle\,\cdot\,|\,\cdot\,\rangle_{X_G}$ be the Hermitian metric on $\mathbb CTX_G$ induced by $\langle\,\cdot\,|\,\cdot\,\rangle$ and let $(\,\cdot\,|\,\cdot\,)_{X_G}$ be the inner product 
	on $L^2(X_G)$ induced by $\langle\,\cdot\,|\,\cdot\,\rangle_{X_G}$.
	Let $\Box^{(0)}_{b,X_G}$ be the Kohn Laplacian on the orbifold $X_G$ with respect to $(\,\cdot\,|\,\cdot\,)_{X_G}$. 
	Let $\Pi_{X_G}: L^2(X_G)\To\Box^{(0)}_{b,X_G}$ be the Szeg\H{o} projection.
	Fix $p\in\mu^{-1}(0)$. Let $(\widetilde U,G_U)\To U$ be an orbifold chart of $X$, $p\in U$.
	By the previous discussion  we have that $G\cdot p= [p]\in X_G$ and a sufficiently small neighborhood $U_{[p]}$ of $[p]$ in $X_G$ has a orbifold local chart $(G_p\times G_U,\widetilde W)\rightarrow U_{[p]}$, where $\widetilde W/G_p=(\widetilde U\cap\mu^{-1}(0))/G_p$ and $G_p:=\set{g\in G;\, gp=p}$. Thus, from Theorem~\ref{thm:main}, the Szeg\H{o} kernel for the orbifold $X_G$ has the following expression in the local chart $(G_p\times G_U,\widetilde W)\rightarrow U_{[p]}$ 
	\begin{equation} \label{eq:szegoorbi}
		\Pi_{X_G}(x,y)\equiv\sum_{h\in G_U, g\in G_p}\int _0^{+\infty} e^{it\, \varphi_{-}^{X_G}(\widetilde{x},g\cdot h\cdot \widetilde{y})} s_{-}^{X_G}(\widetilde{x},g\cdot h\cdot \widetilde{y},t)\,\mathrm{d}t 
	\end{equation}
	where $x,y\in U_{[p]}$ and $ \varphi_{-}^{X_G}$ and $s_{-}^{X_G}$ are respectively the complex phase function and the symbol of $S_{X_G}$ as described in Theorem \ref{thm:main} for the CR orbifold $X_G$.
	
	Let use recall that $$({\rm Ker\,}\Box^{(0)}_b)^G:=\set{u\in{\rm Ker\,}\Box^{(0)}_b;\, g^*u=u, \forall g\in G}\,.$$ 
	Let $\Pi_G: L^2(X)\To ({\rm Ker\,}\Box^{(0)}_b)^G$ be the orthogonal projection ($G$-invariant 
	Szeg\H{o} projection).
	In \cite{hsiaohuang} and \cite{gh}, we studied the $G$-invariant Szeg\H{o} kernel. 
	
	We pause and introduce some notations. Fix $x\in\mu^{-1}(0)$, consider the map
	\[\begin{split}
		R_x: \mathfrak{g}^*&\To\mathfrak{g}^*,\\
		u&\To R_xu,\  \langle\,R_xu\,|\,v\,\rangle=\langle\,d\omega_0(x)\,,\,Ju\wedge v\,\rangle,\ \ v\in\mathfrak{g}^*,
	\end{split}\]
	where $J$ is the natural complex structure map on $HX:={\rm Re\,}T^{1,0}X$. 
	Let 
	\[{\rm det\,}R_x=\lambda_1(x)\cdots\lambda_d(x),\]
	$\lambda_j(x)$, $j=1,\ldots,d$, are the eigenvaules of $R_x$ with respect to $\langle\,\cdot\,|\,\cdot\,\rangle$. Put $$Y_x:=\set{gx\in\mu^{-1}(0);\,g\in G}$$ and let $\mathrm{dV}_{Y_x}$ be the volume form on $Y_x$ induced by the given Hermitian metric $\langle\,\cdot\,|\,\cdot\,\rangle$. Put
	\[V_{{\rm eff\,}}:=\int_{Y_x}\mathrm{dV}_{Y_x}.\]
	
	We now come back to our situation. From Theorem~\ref{thm:main}, we can 
	repeat the procedure in~\cite{gh} and deduce the following: Fix $p\in\mu^{-1}(0)$. Let $(\widetilde U,G_U)\To U$ be an orbifold chart of $X$, $p\in U$. Then, 
	the distributional kernel of $\Pi_G$ satisfies
	\begin{equation}\label{e-gue210802yyd}
		\Pi_{G}(x,y)\equiv\sum_{h\in G_U, g\in G_p}\int_0^{\infty} e^{\imath t\,\Phi_-(\widetilde x,g\cdot h\cdot\widetilde y)}a_{-}(\widetilde x,\,g\cdot h\cdot\widetilde y,\,t)\,\mathrm{d}t\ \ \ \mbox{on $U\times U$},
	\end{equation}
	where 
	\[a_{-}(\widetilde x,\widetilde y,\,t)\sim \sum_{j=0}^{+\infty} a_{-}^{j}(\widetilde x,\widetilde y)\,t^{n-d/2-j}\in S^{n-d/2}_{1,0}(\widetilde U\times\widetilde U\times \mathbb{R})\,,\]
	$a_{-}^{j}\in\mathcal{C}^{\infty}(\widetilde U\times \widetilde U),\, j\in\mathbb N_0$, and for every $\widetilde x\in\widetilde{\mu^{-1}(0)}$, $\widetilde{\mu^{-1}(0)}\subset\widetilde U$ is the lifting of $\mu^{-1}(0)$ on $\widetilde U$, we have 
	\begin{equation}\label{e-gue210802yydI}
		a_{-}^{0}(\widetilde x,\,\widetilde x)=2^{d-1}\frac{1}{V_{{\rm eff\,}}(x)|G_x|}\pi^{-n-1+\frac{d}{2}}\abs{\det R_{\widetilde x}}^{-\frac{1}{2}}\abs{\det\mathcal{L}_{\widetilde x}}\,. \end{equation}
	We refer to \cite{hsiaohuang} or \cite{gh} for a precise description of the complex phase function $\Phi_-$ and the symbol $a_-$. Notice that $x$ and $y$ can be assumed to be $x=\exp_p(v)$ and $y=\exp_p(w)$ for some $v,\,w \in W$ since $S^G_-$ is $G$-invariant. 
	
	Now we can introduce explicitly a map $\sigma: ({\rm Ker\,}\Box^{(0)}_b)^G\rightarrow {\rm Ker\,}\Box^{(0)}_{b,X_G}$. 
	Let $\mathcal{C}^\infty(\mu^{-1}(0))^G$ denote the set of all $G$-invariant smooth functions on $\mu^{-1}(0)$. Let 
	\[
	\iota_G:\mathcal{C}^\infty(\mu^{-1}(0))^G\To\mathcal{C}^{\infty}(X_G)
	\]
	be the natural identification. Let $\iota:\mu^{-1}(0)\To X$ be the natural inclusion and let $\iota^*:\mathcal{C}^{\infty}(X)\To\mathcal{C}^{\infty}(\mu^{-1}(0))$ be the pull-back of $\iota$. 
	Let 
	\[
	f(x)=\sqrt{V_{{\rm eff\,}}(x)\abs{G_x}}.
	\]
	Eventually, let $E\in L^{-\frac{d}{4}}_{{\rm cl\,}}(X_G)$ be an elliptic pseudodifferential operator with principal symbol 
	$\sigma^0_E(x,\xi)=\abs{\xi}^{-\frac{d}{4}}$, we define
	\begin{equation}\label{e-gue180308II}
		\begin{split}
			\sigma: \mathcal{C}^{\infty}(X)&\To {\rm Ker\,}\Box^{(0)}_{b,X_G},
			\\
			u&\mapsto {\Pi_{X_G}}\circ E\circ\iota_G\circ  f\circ \iota^*\circ\Pi_Gu.
		\end{split}
	\end{equation}
	Let $\sigma^{\,*}: \mathcal{C}^\infty(X_{G})\To \mathcal{D}'(X)$ be
	the formal adjoint of $\sigma$. Following the same computations as in \cite{hsiaohuang} with some modification, by an application of stationary phase lemma of \cite{ms} and by making use of \eqref{eq:szegoorbi}, which is a consequence of Theorem~\ref{thm:main} and by equation \eqref{e-gue210802yyd}, we obtain
	
	\begin{thm}\label{t-gue211213yyd}
		Under the hypothesis of this section, suppose that $\Box^{(0)}_{b,X_G}$ has closed range. Then, $\sigma^{\,*}$ maps $\mathcal{C}^{\infty}(X_{G})$ continuously to $\mathcal{C}^{\infty}(X)$. Hence, $\sigma^*\,\sigma: \mathcal{C}^{\infty}(X)\To \mathcal{C}^{\infty}(X)$ is a well-defined continuous operator whose distribution kernel satisfies 
		\begin{equation}\label{e-gue211213yydp}
			\sigma^*\,\sigma=\Pi_G+\Gamma,
		\end{equation}
		where $\Gamma: \mathcal{C}^{\infty}(X)\To\mathcal{C}^{\infty}(X)$ is a continuous map and satisfies the following: there exists $\varepsilon>0$ such that $\Gamma: H^s(X)\To H^{s+\varepsilon}(X)$ is continuous for every $s\in\mathbb\mathbb R$, $H^s(X)$ is the Sobolev space of functions over $X$ order $s$. 
		
		Moreover, 
		\begin{equation}\label{e-gue220803yyd}
			\sigma\,\sigma^*=\Pi_{X_G}+\hat\Gamma,
		\end{equation}
		where $\hat\Gamma: \mathcal{C}^{\infty}(X_G)\To\mathcal{C}^{\infty}(X_G)$ is a continuous map and satisfies the following: there exists $\delta>0$ such that $\hat\Gamma: H^s(X_G)\To H^{s+\delta}(X_G)$ is continuous for every $s\in\mathbb\mathbb R$, $H^s(X_G)$ is the Sobolev space of functions over $X_G$ order $s$. 
	\end{thm}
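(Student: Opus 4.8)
The plan is to compute the Schwartz kernels of the two compositions $\sigma^*\sigma$ and $\sigma\sigma^*$ explicitly, by stacking the complex Fourier integral representations of the two Szeg\H{o} projections involved: \eqref{e-gue210802yyd} for the $G$-invariant projection $\Pi_G$ on $X$, and \eqref{eq:szegoorbi} for the projection $\Pi_{X_G}$ on the reduced orbifold $X_G$. The latter is legitimate precisely because $\Box^{(0)}_{b,X_G}$ has closed range and $X_G$ is strongly pseudoconvex, so Theorem~\ref{thm:main} applies to $X_G$; in particular $\Ker\Box^{(0)}_{b,X_G}\subset\mathcal{C}^\infty(X_G)$ and its kernel carries the phase $\varphi^{X_G}_-$ and symbol $s^{X_G}_-$. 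The remaining factors building $\sigma$ in \eqref{e-gue180308II}---multiplication by $f=\sqrt{V_{\rm eff}\abs{G_x}}$, the identification $\iota_G$, the restriction $\iota^*$ to $\mu^{-1}(0)$ together with its transpose, and the elliptic $E\in L^{-d/4}_{\rm cl}(X_G)$ with $\sigma^0_E(x,\xi)=\abs{\xi}^{-d/4}$---all have known mapping type, so the composite kernels are again oscillatory integrals in the fibre variables $t,s\in\mathbb{R}_+$ together with the $d$ directions transverse to $\mu^{-1}(0)$ and tangent to the $G$-orbits.

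First I would dispose of the continuity claim $\sigma^*:\mathcal{C}^\infty(X_G)\to\mathcal{C}^\infty(X)$. In $\sigma^*$ the projection $\Pi_G$ is applied last and its range lies in $(\Ker\Box^{(0)}_b)^G\subset\mathcal{C}^\infty(X)$; the substance is thus to check that the composed kernel of $\sigma^*$ is that of a complex Fourier integral operator whose phase has positive imaginary part off the relevant diagonal, so that it carries smooth data on $X_G$ to smooth data on $X$. This is read off from $\Phi_-$ and $\varphi^{X_G}_-$, using that ${\rm Im\,}\Phi_-$ and ${\rm Im\,}\varphi^{X_G}_-$ are positive away from their diagonals (Theorem~\ref{thm:main}), which already forces Gaussian decay away from the matched configurations and hence the required smoothing along $\mu^{-1}(0)$.

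The heart of the matter is the stationary phase analysis of $\sigma^*\sigma$. Fixing an orbifold chart $(\widetilde U,G_U)\to U$ about a point $p\in\mu^{-1}(0)$ and the induced chart $(G_p\times G_U,\widetilde W)\to U_{[p]}$ of $X_G$, the kernel of $\sigma^*\sigma$ becomes a double oscillatory integral in $(t,s)$ carrying the finite sums over $G_U$ and $G_p$. I would integrate out the $d$ transverse/orbit directions by the stationary phase lemma of \cite{ms}; the critical set is the $G$-orbit through the base point, with nondegenerate transverse Hessian whose determinant is governed by $R_x$, producing the factor $\abs{\det R_{\widetilde x}}^{-1/2}$. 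The normalizations in \eqref{e-gue180308II} were chosen exactly so that the leading term reassembles $\Pi_G$: the orbit-integration Jacobian $\sqrt{V_{\rm eff}\abs{G_x}}$ is cancelled by $f$, the order mismatch is absorbed by $E$, and the numerical constant together with the stationary-phase Jacobian $(2\pi/t)^{d/2}$ recovers precisely the leading symbol \eqref{e-gue210802yydI} of $\Pi_G$. Hence $\sigma^*\sigma\equiv\Pi_G$ to top order, while each successive term of the asymptotic expansion drops the order, so the remainder $\Gamma=\sigma^*\sigma-\Pi_G$ gains $\varepsilon$ derivatives for some $\varepsilon>0$. The computation for $\sigma\sigma^*$ is dual: one localizes on $X_G$, applies stationary phase in the $X$-fibre variables, and recovers $\Pi_{X_G}$ at leading order with remainder $\hat\Gamma$ gaining $\delta>0$ derivatives.

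The main obstacle, beyond transcribing the manifold computation of \cite{hsiaohuang} and \cite{gh}, is the bookkeeping of the finite-group sums intrinsic to the orbifold charts: in \eqref{e-gue210802yyd} and \eqref{eq:szegoorbi} the kernels carry sums over $h\in G_U$ and $g\in G_p$, so the composite involves double sums of products of the phases $\Phi_-$ and $\varphi^{X_G}_-$ evaluated at group translates. I expect the crux to be showing that only the \emph{matched} pairs of group elements---those for which the combined phase has a critical point on the chart diagonal---contribute to the leading stationary-phase asymptotics, while every unmatched pair keeps ${\rm Im\,}$ of its phase bounded below on the relevant set and so yields only a smoothing contribution. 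This is exactly where the positivity of the phase functions from Theorem~\ref{thm:main} is used, and where the equivariance \eqref{e-gue220731yyd} guarantees that the surviving terms reassemble into the invariantly defined kernels $\Pi_G$ and $\Pi_{X_G}$ rather than chart-dependent expressions.
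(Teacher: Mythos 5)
Your proposal follows essentially the same route as the paper, which itself gives no more detail than precisely this plan: compose the Fourier integral representations \eqref{e-gue210802yyd} and \eqref{eq:szegoorbi} of $\Pi_G$ and $\Pi_{X_G}$, apply the Melin--Sj\"ostrand stationary phase lemma of \cite{ms} following the computations of \cite{hsiaohuang} and \cite{gh} with the extra bookkeeping of the finite-group sums coming from the orbifold charts, and check that the normalizations $f$ and $E$ in \eqref{e-gue180308II} make the leading terms reproduce $\Pi_G$ and $\Pi_{X_G}$ up to lower-order remainders. One small caveat: your parenthetical inclusions $(\Ker\Box^{(0)}_b)^G\subset\mathcal{C}^\infty(X)$ and $\Ker\Box^{(0)}_{b,X_G}\subset\mathcal{C}^\infty(X_G)$ are false (closed range does not make $\Box^{(0)}_b$ hypoelliptic, and the $L^2$ Hardy space contains non-smooth elements), but this does not damage your argument, since, as you yourself note, the smoothness of $\sigma^*u$ must in any case be extracted from the positivity of ${\rm Im\,}\Phi_-$ and ${\rm Im\,}\varphi^{X_G}_-$ in the composed kernel.
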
 
	
	From Theorem~\ref{t-gue211213yyd}, we see that $\sigma$ can be extended by density to
	\begin{equation}\label{e-gue211213yydr}
		\sigma: L^2(X)\To {\rm Ker\,}\Box^{(0)}_{b,X_G}.
	\end{equation} 
	
	\begin{cor}\label{c-gue220802yyd}
		Under the preceding assumptions and notations, the map $\sigma$  given by \eqref{e-gue180308II} and \eqref{e-gue211213yydr} has the following properties:
		\begin{itemize}
			\item[(i)] ${\rm Ker\,}\sigma\cap({\rm Ker\,}\Box^{(0)}_{b})^G$ is a finite dimensional subspace of $\mathcal{C}^{\infty}(X)$. 
			\item[(ii)] ${\rm Coker\,}\sigma\cap {\rm Ker\,}\Box^{(0)}_{b,X_G}$ is a finite dimensional subspace of $\mathcal{C}^{\infty}(X_G)$.
		\end{itemize}
		Moreover, we have 
		$\sigma: ({\rm Ker\,}\Box^{(0)}_{b})^G\To {\rm Ker\,}\Box^{(0)}_{b,X_G}$ is a Fredholm map.
	\end{cor}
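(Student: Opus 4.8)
\emph{Proof proposal.} The plan is to read the two operator identities of Theorem~\ref{t-gue211213yyd} as the statement that $\sigma$ possesses a two-sided parametrix modulo compact operators, and then to invoke the Riesz--Fredholm theory. Set $H_1:=({\rm Ker\,}\Box^{(0)}_{b})^G\subset L^2(X)$ and $H_2:={\rm Ker\,}\Box^{(0)}_{b,X_G}\subset L^2(X_G)$; both are closed subspaces, the closedness of $H_2$ being exactly the standing closed-range hypothesis and that of $H_1$ following from the closed range of $\Box^{(0)}_b$ on the strongly pseudoconvex orbifold $X$. Because $\sigma$ factors through $\Pi_G$ in \eqref{e-gue180308II}, i.e.\ $\sigma=\sigma\circ\Pi_G$, and $\Pi_G^{\,*}=\Pi_G$, one gets $\sigma^{\,*}=\Pi_G\circ\sigma^{\,*}$, so ${\rm Im\,}\sigma^{\,*}\subset H_1$; consequently the Hilbert-space adjoint of the restricted map $\sigma|_{H_1}\colon H_1\to H_2$ is precisely $\sigma^{\,*}|_{H_2}$, which I write $\sigma^\dagger$.

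First I would restrict $\sigma^{\,*}\sigma=\Pi_G+\Gamma$ to $H_1$. There $\Pi_G$ acts as the identity, so $\sigma^\dagger\sigma={\rm Id}+\Gamma|_{H_1}$; since the left-hand side and ${\rm Id}$ both send $H_1$ into $H_1$, so does $\Gamma|_{H_1}$. To see that $\Gamma|_{H_1}$ is compact on $H_1$, note that $\Gamma\colon L^2(X)\to H^\varepsilon(X)$ is bounded, that the inclusion $H^\varepsilon(X)\hookrightarrow L^2(X)$ is compact by Rellich's lemma on the compact orbifold $X$, and that the image lies in the $L^2$-closed subspace $H_1$; hence the corestriction $H_1\to H_1$ is compact. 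The identical argument applied on $X_G$ to $\sigma\sigma^{\,*}=\Pi_{X_G}+\hat\Gamma$, using that $\hat\Gamma$ gains $\delta>0$ derivatives, yields $\sigma\sigma^\dagger={\rm Id}+\hat\Gamma|_{H_2}$ with $\hat\Gamma|_{H_2}$ compact on $H_2$.

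Then, invoking the Riesz theory, both ${\rm Id}+\Gamma|_{H_1}$ and ${\rm Id}+\hat\Gamma|_{H_2}$ are Fredholm of index zero. For (i), the elementary identity $\norm{\sigma u}^2=(\,\sigma^\dagger\sigma u\,|\,u\,)$ shows $\ker(\sigma|_{H_1})=\ker(\sigma^\dagger\sigma)=\ker({\rm Id}+\Gamma|_{H_1})$, which is finite-dimensional; by hypoellipticity of $\Box^{(0)}_b$ every element of $H_1$ is smooth, so this kernel lies in $\mathcal{C}^\infty(X)$. For (ii), Fredholmness of $\sigma\sigma^\dagger={\rm Id}+\hat\Gamma|_{H_2}$ forces ${\rm Im\,}\sigma$ to be closed of finite codimension, since it contains the closed finite-codimension space ${\rm Im\,}(\sigma\sigma^\dagger)$; whence ${\rm Coker\,}\sigma\cong\ker\sigma^\dagger=\ker({\rm Id}+\hat\Gamma|_{H_2})$ is finite-dimensional and contained in $H_2\subset\mathcal{C}^\infty(X_G)$. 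Finite-dimensionality of both kernel and cokernel is exactly the assertion that $\sigma\colon H_1\to H_2$ is Fredholm.

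The analytic heart of the matter is already isolated in Theorem~\ref{t-gue211213yyd}, so the corollary is essentially formal; the only points needing care are the identification $\sigma^\dagger=\sigma^{\,*}|_{H_2}$ through the factorization $\sigma=\sigma\circ\Pi_G$, and the passage from ``$\Gamma$ (resp.\ $\hat\Gamma$) gains a positive number of derivatives'' to genuine compactness on the $L^2$-closed kernel subspaces. The latter is where I would be most careful: the Rellich embedding must be taken on the compact orbifolds $X$ and $X_G$, and one must check that corestricting a compact operator to the closed invariant subspace in which its image already lies does not destroy compactness.
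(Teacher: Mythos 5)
Your overall strategy coincides with the paper's: everything rests on the two identities $\sigma^*\sigma=\Pi_G+\Gamma$ and $\sigma\sigma^*=\Pi_{X_G}+\hat\Gamma$ from Theorem~\ref{t-gue211213yyd}, the fact that $\Gamma$ and $\hat\Gamma$ gain a positive number of Sobolev derivatives, and Rellich's lemma on the compact orbifolds. The paper proves finite-dimensionality by hand (an orthonormal sequence in the kernel is bounded in $H^\varepsilon$, hence has an $L^2$-convergent subsequence, contradicting $\norm{u_{j_s}-u_{k_s}}=\sqrt2$), which is exactly the standard proof that $\ker(\mathrm{Id}+K)$ is finite-dimensional for $K$ compact; you simply cite Riesz theory instead of reproving it. That part of your argument is fine and buys nothing essentially new.

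There is, however, one genuine error: your justification of the $\mathcal{C}^\infty$ membership in (i) and (ii). You write that ``by hypoellipticity of $\Box^{(0)}_b$ every element of $H_1$ is smooth'' and that the cokernel is ``contained in $H_2\subset\mathcal{C}^\infty(X_G)$.'' Both claims are false in the present setting: for $q=0$ on a strongly pseudoconvex CR orbifold condition $Y(0)$ fails, $\Box^{(0)}_b$ is not hypoelliptic (this is precisely why $\Pi^{(0)}$ is a nontrivial Fourier integral operator rather than a smoothing operator), and ${\rm Ker\,}\Box^{(0)}_b$ consists of $L^2$ CR functions which need not be smooth. The smoothness asserted in the Corollary is not a property of $H_1$ or $H_2$; it comes from membership in the kernel of $\mathrm{Id}+\Gamma|_{H_1}$ (resp.\ $\mathrm{Id}+\hat\Gamma|_{H_2}$). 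The correct argument, and the one the paper gives, is a bootstrap: if $u\in{\rm Ker\,}\sigma\cap H_1$ then $0=\sigma^*\sigma u=u+\Gamma u$, so $u=-\Gamma u\in H^{\varepsilon}(X)$, hence $u=-\Gamma u\in H^{2\varepsilon}(X)$, and iterating, $u\in\bigcap_{s}H^s(X)=\mathcal{C}^\infty(X)$; similarly for the cokernel using $\hat\Gamma$. You have all the ingredients for this at hand, so the fix is one line, but as written the smoothness part of the statement is not proved. The remaining points you flag for care (the identification $\sigma^\dagger=\sigma^*|_{H_2}$ via $\sigma=\sigma\circ\Pi_G$, and compactness of the corestriction of $\Gamma$ to the closed subspace $H_1$) are handled correctly.
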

	
	\begin{proof}
		Let $u\in{\rm Ker\,}\sigma\cap({\rm Ker\,}\Box^{(0)}_{b})^G$. From \eqref{e-gue211213yydp}, we have 
		\begin{equation}\label{e-gue211213ycdt}
			0=\sigma^*\,\sigma u=(\Pi_G+\Gamma)u=u+\Gamma u.
		\end{equation}
		From the previous theorem and \eqref{e-gue211213ycdt}, we deduce that $u\in H^{\varepsilon}(X)$. By applying again the previous theorem and \eqref{e-gue211213ycdt}, we get $u\in H^{2\varepsilon}(X)$. Continuing in this way, we conclude that $u\in\mathcal{C}^{\infty}(X)$. 
		
		Suppose that ${\rm Ker\,}\sigma\cap({\rm Ker\,}\Box^{(0)}_{b})^G$ is not a finite dimensional subspace of $\mathcal{C}^{\infty}(X)$. We can find $u_j\in{\rm Ker\,}\sigma\cap({\rm Ker\,}\Box^{(0)}_{b})^G$, $j=1,2,\ldots$, $(\,u_j\,|\,u_k\,)=\delta_{j,k}$, for every $j, k=1,\ldots$. From the previous theorem and \eqref{e-gue211213ycdt}, 
		we notice that $\{u_j\}_{j\in\mathbb{N}}$ is a bounded set in $H^{\varepsilon}(X)$. By Rellich's lemma, there is a subsequence $1\leq j_1<j_2<\cdots$, $\lim_{s\To+\infty}j_s=+\infty$, such that $u_{j_s}\To u$ in $L^2(X)$ as $s\To+\infty$, for some $u\in L^2(X)$. Since  
		$(\,u_j\,|\,u_k\,)=\delta_{j,k}$, for every $j, k=1,\ldots$, we have
		\[\lVert u_{j_s} - u_{k_s} \rVert_{L^2}=\sqrt{2} \]
		and we get a contradiction since $u_{j_s}$ and $u_{k_s}$ converge to $u$ as $k_s,j_s$ goes to infinity. Thus, ${\rm Ker\,}\sigma\cap({\rm Ker\,}\Box^{(0)}_{b})^G$ is a finite dimensional subspace of $\mathcal{C}^{\infty}(X)$. 
		
		We can repeat the procedure above with minor change and deduce that ${\rm Coker\,}\sigma\cap {\rm Ker\,}\Box^{(0)}_{b,X_G}$ is a finite dimensional subspace of $\mathcal{C}^{\infty}(X_G)$.
	\end{proof} 
	
	Recall that when  $X$ is a circle orbibundle, we have a natural circle action which commutes with the action of $G$ on $X$ and it is transversal to the CR structure. This induces a decomposition of $({\rm Ker\,}\Box^{(0)}_{b})^G$ into Fourier components $({\rm Ker\,}\Box^{(0)}_{b})^G_k$ and similarly ${\rm Ker\,}\Box^{(0)}_{b,X_G}$ splits as a direct sum $({\rm Ker\,}\Box^{(0)}_{b,X_G})_k$, $k\in \mathbb{Z}$. Now, we can take the limits as $k$ goes to infinity in the previous corollary and we get that quantization commutes with reduction for $k$ large, $X$ and $X_G$ may be orbifolds.
	
	\bigskip
	\textbf{Acknowledgments:} This project was started during the first author’s postdoctoral fellowship at the National Center for Theoretical Sciences in Taiwan; we thank the Center for the support. Andrea Galasso thanks Yi-Sheng Wang for conversations about orbifolds. Chin-Yu Hsiao was partially supported by Taiwan Ministry of Science and Technology projects  108-2115-M-001-012-MY5, 109-2923-M-001-010-MY4.

\end{document}